\newtheorem{same}{This should never appear}[section]
\newtheorem{defin}[same]{Definition}
\newtheorem{remark}[same]{Remark}
\newtheorem{theorem}[same]{Theorem}
\newtheorem{lemma}[same]{Lemma}
\newtheorem{fact}[same]{Fact}
\newtheorem{question}[same]{Question}
\newtheorem{cor}[same]{Corollary}
\newtheorem{prop}[same]{Proposition}
\newbox\noforkbox \newdimen\forklinewidth
\noforkbox\hbox{\lower 2pt\box1\lower 2pt\box0\relax}
\def\unionstick{\mathop{\copy\noforkbox}\limits}
\def\nonfork_#1{\unionstick_{\textstyle #1}}
\newbox\doesforkbox
\doesforkbox\hbox{\lower 2pt\box1 \lower 2pt\box2\lower2pt\box0\relax}
\def\nunionstick{\mathop{\copy\doesforkbox}\limits}
\def\fork_#1{\nunionstick_{\textstyle #1}}
\newbox\noforkbox \newdimen\forklinewidth
\noforkbox\hbox{\lower 2pt\box1\lower
2pt\box0\relax}
\def\unionstick{\mathop{\copy\noforkbox}\limits}
\newcommand{\nf}{\unionstick}
\newcommand{\nfs}[4]{#2 \nf_{#1}^{#4} #3}
\def\1nf{\unionstick^{(1)}}
\def\2nf{\unionstick^{(2)}}
\def\3nf{\unionstick^{(3)}}
\newcommand{\gtp}{\operatorname{gtp}}
\newcommand{\gS}{\operatorname{gS}}
\newcommand{\Sbs}{\gS^\text{bs}}
\newcommand{\gSbs}{\Sbs}
\newcommand{\ba}{\bold{a}}
\newcommand{\bb}{\bold{b}}
\newcommand{\cf}[1]{\operatorname{cf}(#1)}
\newcommand{\rest}{\upharpoonright}
\newcommand{\Ll}{\mathbb{L}}
\newcommand{\LS}{\operatorname{LS}}
\newcommand{\K}{\mathbf{K}}
\newcommand{\leap}[1]{\le_{#1}}
\newcommand{\ltap}[1]{<_{#1}}
\newcommand{\geap}[1]{\ge_{#1}}
\newcommand{\leanf}{\leap{\K_{\lambda^+}}^{\NF}}
\newcommand{\leanfn}{\leap{\K_{\lambda^+}^n}^{\NF}}
\newcommand{\lean}{\leap{\K^n}}
\newcommand{\lta}{\ltap{\K}}
\newcommand{\lea}{\leap{\K}}
\newcommand{\gea}{\geap{\K}}
\newcommand{\NF}{\operatorname{NF}}
\newcommand{\NFhat}{\widehat{\operatorname{NF}}}
\newcommand{\s}{\mathfrak{s}}
\newcommand{\ts}{\mathfrak{t}}
\newcommand{\seq}[1]{\langle #1 \rangle}
\newcommand{\goodp}{\text{good}^+}
\newcommand{\tp}{\textrm{tp}}
\newcommand{\supp}{\text{supp}}
\title{Good frames in the Hart-Shelah example}
\author{Will Boney}
\email{wboney@math.harvard.edu}
\urladdr{http://math.harvard.edu/\textasciitilde wboney/}
\address{Department of Mathematics, Harvard University, Cambridge, MA, USA}
\thanks{This material is based upon work done while the first author was supported by the National Science Foundation under Grant No. DMS-1402191.}
\date{\today \\
AMS 2010 Subject Classification: Primary 03C48. Secondary: 03C45, 03C52, 03C55, 03C75, 03E55.}
\keywords{Abstract elementary classes; Hart-Shelah example; Good frames; Uniqueness triples}
\author{Sebastien Vasey}
\email{sebv@math.harvard.edu}
\urladdr{http://math.harvard.edu/\textasciitilde sebv/}
\address{Department of Mathematical Sciences, Carnegie Mellon University, Pittsburgh, PA, USA. Current address: Department of Mathematics, Harvard University, Cambridge, MA, USA}
\begin{document}

\begin{abstract}
  For a fixed natural number $n \ge 1$, the Hart-Shelah example is an abstract elementary class (AEC) with amalgamation that is categorical exactly in the infinite cardinals less than or equal to $\aleph_{n}$.

  We investigate recently-isolated properties of AECs in the setting of this example. We isolate the exact amount of type-shortness holding in the example and show that it has a type-full good $\aleph_{n - 1}$-frame which fails the existence property for uniqueness triples. This gives the first example of such a frame. Along the way, we develop new tools to build and analyze good frames. 
\end{abstract}

\maketitle

\section{Introduction}

In his milestone two-volume book on classification theory for abstract elementary classes (AECs) \cite{shelahaecbook, shelahaecbook2}, Shelah introduces a central definition: good $\lambda$-frames.  These are an axiomatic notion of forking for types of singletons over models of cardinality $\lambda$ (see \cite[II.2.1]{shelahaecbook} or Definition \ref{good-frame-def} here). One can think of the statement ``an AEC $\K$ has a good $\lambda$-frame'' as meaning that $\K$ is well-behaved in $\lambda$, where ``well-behaved'' in this context means something similar to superstability in the context of first-order model theory. With this in mind, a key question is:

\begin{question}[The extension question]
  Assume an AEC $\K$ has a good $\lambda$-frame. Under what conditions does $\K$ (or a subclass of saturated models) have a good $\lambda^+$-frame?
\end{question}

Shelah's answer in \cite[II]{shelahaecbook} involves two dividing lines: the existence property for uniqueness triples, and smoothness of a certain ordering $\leanf$ (see Definitions \ref{weakly-succ-def}, \ref{succ-def}). Shelah calls a good frame satisfying the first property \emph{weakly successful} and a good frame satisfying both properties is called \emph{successful}. Assuming instances of the weak diamond, Shelah shows \cite[II.5.9]{shelahaecbook} that the failure of the first property implies many models in $\lambda^{++}$. In \cite[II.8.7]{shelahaecbook} (see also \cite[7.1.3]{jrsh875}), Shelah shows that if the first property holds, then the failure of the second implies there exists $2^{\lambda^{++}}$ many models in $\lambda^{++}$.

However, Shelah does not give any examples showing that these two properties can fail (this is mentioned as part of the ``empty half of the glass'' in Shelah's introduction \cite[N.4.A(f)]{shelahaecbook}). The present paper investigates these dividing lines in the specific setup of the Hart-Shelah example \cite{hs-example}. For a fixed\footnote{Note that our indexing follows \cite{hs-example} and \cite{bk-hs} rather than \cite{ext-frame-jml}.} $n \in [3, \omega)$, the Hart-Shelah example is an AEC $\K^n$ that is categorical exactly in the interval $[\aleph_0, \aleph_{n-2}]$. It was investigated in details by Baldwin and Kolesnikov \cite{bk-hs} who proved that $\K^n$ has (disjoint) amalgamation, is (Galois) stable exactly in the infinite cardinals less than or equal to $\aleph_{n - 3}$, and is $(<\aleph_0, \leq \aleph_{n-3})$-tame (i.e.\ Galois types over models of size at most $\aleph_{n - 3}$ are determined by their restrictions to finite sets, see Definition \ref{tameness-def}).

  The Hart-Shelah example is a natural place to investigate good frames, since it has good behavior only below certain cardinals (around $\aleph_{n - 3}$). The first author has shown \cite[10.2]{ext-frame-jml} that $\K^n$ has a good $\aleph_k$ frame for any $k \le n - 3$, but cannot have one above since stability is part of the definition of a good frame. Therefore at $\aleph_{n - 3}$, the last cardinal when $\K^n$ has a good frame, the answer to the extension question must be negative, so one of the two dividing lines above must fail, i.e.\ the good frame is \emph{not} successful. The next question is: \emph{which} of these properties fails? We show that the first property must fail: the frame is not weakly successful. In fact, we give several proofs (Theorem \ref{negative-prop}, Corollary \ref{negative-prop-2}). On the other hand, we show that the frames strictly below $\aleph_{n - 3}$ are successful\footnote{While there are no known examples, it is conceivable that there is a good frame that is not successful but can still be extended.}. This follows both from a concrete analysis of the Hart-Shelah example (Theorem \ref{hs-successful}) and from abstract results in the theory of good frames (Theorem \ref{abstract-positive}).

  Regarding the abstract theory, a focus of recent research has been the interaction of locality properties and frames. For example, the first author \cite{ext-frame-jml} (with slight improvements in \cite[6.9]{tame-frames-revisited-jsl}) has shown that amalgamation and \emph{tameness} (a locality property for types isolated by Grossberg and VanDieren \cite{tamenessone}) implies a positive answer to the extension question (in particular, the Hart-Shelah example is \emph{not} $(\aleph_{n - 3}, \aleph_{n - 2})$-tame\footnote{This was already noticed by Baldwin and Kolesnikov using a different argument \cite[6.8]{bk-hs}.}). A relative of tameness is type-shortness, introduced by the first author in \cite[3.2]{tamelc-jsl}: roughly, it says that types of sequences are determined by their restriction to small subsequences. Sufficient amount of type-shortness implies (with a few additional technical conditions) that a good frame is weakly successful \cite[Section 11]{indep-aec-apal}.

  As already mentioned, Baldwin and Kolesnikov have shown that the Hart-Shelah example is $(<\aleph_0, \le \aleph_{n - 3})$-tame (see Fact \ref{bk-fact}); Theorem \ref{shortness-thm} refines their argument to show that $\K^n$ is also $(<\aleph_0, <\aleph_{n - 3})$-type short over models of size less than or equal to $\aleph_{n - 3}$ (i.e.\ types of sequences of length less than $\aleph_{n - 3}$ are determined by their finite restrictions, see Definition \ref{tameness-def}). We prove that this is optimal: the result \emph{cannot} be extended to types of length $\aleph_{n - 3}$ (see Corollary \ref{type-short-negative}).

  We can also improve the aforementioned first author's construction of a good $\aleph_k$-frame (when $k \le n -3$) in the Hart-Shelah example.  The good frame built there is not type-full: forking is only defined for a certain (dense family) of basic types. We prove here that the good frame extends to a type-full one. This uses abstract constructions of good frames due to the second author \cite{ss-tame-jsl} (as well as results of VanDieren on the symmetry property \cite{vandieren-symmetry-apal}) when $k \ge 1$. When $k = 0$ we have to work more and develop new general tools to build good frames (see Section \ref{good-frame-sec}).  Motivated by this abstract work, we can give an explicit description of these type-full good extensions (Proposition \ref{exp-tf-nf}).
  
  The following summarizes our main results:

\begin{theorem}
  Let $n \in [3, \omega)$ and let $\K^n$ denote the AEC induced by the Hart-Shelah example. Then:

    \begin{enumerate}
    \item $\K^n$ is $(<\aleph_0, < \aleph_{n-3})$-type short over $\leq \aleph_{n-3}$-sized models and $(<\aleph_0, \le \aleph_{n - 3})$-tame for $(<\aleph_{n-3})$-length types.
    \item $\K^n$ is \emph{not} $(<\aleph_{n - 3}, \aleph_{n - 3})$-type short over $\aleph_{n - 3}$-sized models.
    \item For any $k \le n - 3$, there exists a unique type-full good $\aleph_k$-frame $\s$ on $\K^n$. Moreover:
      \begin{enumerate}
      \item If $k < n - 3$, $\s$ is successful $\goodp$.
      \item If $k = n -3$, $\s$ is \emph{not} weakly successful.
      \end{enumerate}
    \end{enumerate}
\end{theorem}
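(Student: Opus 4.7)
The plan is to establish each of the three clauses separately, adapting existing work and drawing on the recent theory of good frames.

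For clause (1), the tameness statement is Baldwin--Kolesnikov's (Fact \ref{bk-fact}). The sharper type-shortness conclusion would be obtained by revisiting their argument: they show that two elements realizing the same type over all finite subsets of a model of size at most $\aleph_{n-3}$ can be conjugated by a coherent amalgamation construction built from the finitary cocycle structure of the Hart-Shelah example. Since this construction is local, it should lift to sequences of length strictly less than $\aleph_{n-3}$; the bookkeeping task is to assemble the finite amalgams witnessing Galois equivalence on each finite subsequence into a single global one. For clause (2), I would construct two sequences $\bar{a}, \bar{b}$ of length $\aleph_{n-3}$ over a common model that agree on every proper subsequence but realize distinct Galois types. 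The obstruction should be a global cohomological twist, reflecting the same coherence failure responsible for categoricity stopping at $\aleph_{n-2}$; the witness must be invisible to subtuples of smaller length.

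For clause (3), existence of a good $\aleph_k$-frame on a dense class of basic types for $k \le n-3$ is \cite[10.2]{ext-frame-jml}. To upgrade to a type-full frame, for $k \ge 1$ I would apply the abstract machinery of \cite{ss-tame-jsl} together with VanDieren's symmetry theorem \cite{vandieren-symmetry-apal}; the required tameness and amalgamation come from \cite{bk-hs} and clause (1). For $k = 0$ this machinery does not apply directly, and new general tools for building good frames are needed, which would be developed in Section \ref{good-frame-sec}. Uniqueness of the type-full frame then follows from the canonicity theory of good frames. For subclause (a), once the type-full frame exists and $k < n-3$, the tameness and type-shortness from clause (1) allow an abstract construction as in \cite[Section 11]{indep-aec-apal} to yield a successful $\goodp$ frame. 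Subclause (b) would follow from clause (2) via the same abstract connection: failure of type-shortness at $\aleph_{n-3}$ forces failure of the existence property for uniqueness triples, so the frame cannot be weakly successful.

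The main obstacle is the construction of the type-full good $\aleph_0$-frame in clause (3), since the standard tame-based machinery is unavailable at the base cardinal and genuinely new general tools must be developed. A secondary difficulty is the concrete content of clause (2): one must exhibit the cohomological obstruction at $\aleph_{n-3}$ explicitly, and both the negative type-shortness conclusion and the failure of weakly successful in (3)(b) rest on that construction.
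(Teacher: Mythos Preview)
Your overall plan for (1) and the existence/uniqueness part of (3) matches the paper. The serious gap is in the logical dependency you set up between (2) and (3)(b).

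You propose to prove (2) directly and then deduce (3)(b) from it, writing that ``failure of type-shortness at $\aleph_{n-3}$ forces failure of the existence property for uniqueness triples.'' This implication goes the wrong way. The abstract connection (as in \cite[Section 11]{indep-aec-apal} or Fact \ref{good-frame-constr}) says that \emph{sufficient type-shortness implies weakly successful}; its contrapositive is that failure of weakly successful implies failure of type-shortness, not the converse. There is no general result deriving failure of uniqueness triples from failure of type-shortness, and no reason to expect one. So even if you managed to construct the sequences for (2) directly, you would still have no argument for (3)(b).

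The paper reverses this dependency. It proves (3)(b) first, by a concrete analysis inside $\K^n$: uniqueness triples for the frame $\s^{n-3,n}$ must be one-point extensions (Lemma \ref{uniq-trip-char}), and existence of such uniqueness triples is shown to be equivalent to $2$-amalgamation of solutions over sets of size $\aleph_{n-3}$ (Theorem \ref{negative-prop}), which fails by Fact \ref{amal-sol-fact}. Only then is (2) obtained as Corollary \ref{type-short-negative}, via the correct contrapositive: were $\K^n$ $(<\aleph_{n-3},\aleph_{n-3})$-type short over $\aleph_{n-3}$-sized models, the abstract machinery would force the frame to be weakly successful, contradicting (3)(b). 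Thus the ``cohomological obstruction'' you anticipate is never exhibited at the level of types; it is the known failure of $2$-amalgamation of solutions that does the work.

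A smaller point on (3)(a): the purely abstract route via Fact \ref{good-frame-constr} requires $\LS(\K) < \lambda$, so it does not cover the case $k = 0$ (with $n \ge 4$). The paper handles this by a direct argument (Theorem \ref{hs-successful}) showing that $(\ast)$ of Lemma \ref{main-red-lem} holds, using $2$-amalgamation of solutions over sets of size $\aleph_k$ for $k < n-3$.
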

\begin{proof} \
  \begin{enumerate}
  \item By Theorem \ref{shortness-thm}.
  \item By Corollary \ref{type-short-negative}.
  \item By Theorems \ref{abstract-positive} and Corollary \ref{hs-aleph0-frame}. Note also that by canonicity (Fact \ref{canon-fact}), $\s$ is unique, so extends $\s^{k, n}$ (see Definition \ref{simple-frame-def}).
    \begin{enumerate}
    \item By Theorem \ref{hs-successful}, $\s^{k, n}$ is successful. By Lemma \ref{goodp-lem}, $\s^{k, n}$ is $\goodp$. Now apply Facts \ref{canon-fact} and \ref{type-full-succ}.
    \item By Proposition \ref{negative-prop}, $\s^{k, n}$ is not weakly successful and since $\s$ extends $\s^{k, n}$, $\s$ is not weakly successful either.
    \end{enumerate}
  \end{enumerate}
\end{proof}

We discuss several open questions. First, one can ask whether the aforementioned second dividing line can fail:

\begin{question}[See also {\cite[7.1]{jarden-tameness-apal}}]
  Is there an example of a good $\lambda$-frame that is weakly successful but not successful? 
\end{question}

Second, one can ask whether there is any example at all of a good frame where the forking relation can be defined only for certain types\footnote{After the initial circulation of this paper in July 2016, the second author found that an example of Shelah \cite[VII.5.7]{shelahaecbook2} has a good frame that cannot be extended to be type-full.}:

\begin{question}
  Is there an example of a good $\lambda$-frame that does \emph{not} extend to a type-full frame?
\end{question}

We have not discussed $\goodp$ in our introduction: it is a technical property of frames that allows one to extend frames without changing the order (see the background in Section \ref{prelim-abstract}). No negative examples are known.

\begin{question}
  Is there an example of a good $\lambda$-frame that is not $\goodp$? Is there an example that is successful but not $\goodp$?
\end{question}

In a slightly different direction, we also do not know of an example of a good frame failing symmetry:

\begin{question}[See also {\cite[4.13]{vv-symmetry-transfer-afml}}]
  Is there an example of a triple $(\K, \nf, \Sbs)$ satisfying all the requirements from the definition of a good $\lambda$-frame except symmetry?
\end{question}

In the various examples, the proofs of symmetry either uses disjoint amalgamation (as in \cite[II.3.7]{shelahaecbook}) or failure of the order property (see e.g.\ \cite[5.14]{bgkv-apal}). Recently the second author \cite[4.8]{categ-saturated-afml} has shown that symmetry follows from (amalgamation, no maximal models, and) \emph{solvability} in any $\mu > \lambda$ (see \cite[IV.1.4(1)]{shelahaecbook}; roughly it means that the union of a short chain of saturated model of cardinality $\mu$ is saturated, and there is a ``constructible'' witness). We do not know of an example of a good $\lambda$-frame where solvability in every $\mu > \lambda$ fails.

The background required to read this paper is a solid knowledge of AECs (including most of the material in \cite{baldwinbook09}). Familiarity with good frames and the Hart-Shelah example would be helpful, although we have tried to give a self-contained presentation and quote all the black boxes we need.

This paper was written while the second author was working on a Ph.D.\ thesis under the direction of Rami Grossberg at Carnegie Mellon University and he would like to thank Professor Grossberg for his guidance and assistance in his research in general and in this work specifically. The authors would also like to thank the referees for comments that helped improve the presentation of this paper.

\section{Preliminaries: The abstract theory}\label{prelim-abstract}

Everywhere in this paper, $\K$ denotes a fixed AEC (that may or may not have structural properties such as amalgamation or arbitrarily large models). We assume the reader is familiar with concepts such as amalgamation, Galois types, tameness, type-shortness, stability, saturation, and splitting (see for example the first twelve chapters of \cite{baldwinbook09}). Our notation is standard and is described in the preliminaries of \cite{sv-infinitary-stability-afml}.

On tameness and type-shortness, we use the notation from \cite[3.1,3.2]{tamelc-jsl}:

\begin{defin}\label{tameness-def}
  Let $\lambda \ge \LS (\K)$ and let $\kappa, \mu$ be infinite cardinals\footnote{As opposed to the first author's original definition, we allow $\kappa \le \LS (\K)$ by making use of Galois types over sets, see the preliminaries of \cite{sv-infinitary-stability-afml}.}
  \begin{enumerate}
  \item $\K$ is \emph{$(<\kappa, \lambda)$-tame for $\mu$-length types} if for any $M \in \K_\lambda$ and distinct $p,q \in \gS^{\mu} (M)$, there exists $A \subseteq |M|$ with $|A| < \kappa$ such that $p \rest A \neq q \rest A$. When $\mu = 1$ (i.e.\ we are only interested in types of length one), we omit it and just say that $\K$ is $(<\kappa, \lambda)$-tame.
  \item $\K$ is \emph{$(<\kappa, \mu)$-type short over $\lambda$-sized models} if for any $M \in \K_\lambda$ and distinct $p, q \in \gS^{\mu} (M)$, there exists $I \subseteq \mu$ with $|I| < \kappa$ and $p^I \neq q^I$.
  \end{enumerate}

  We similarly define variations such as ``$\K$ is $(<\kappa, \le \mu)$-type short over $\le \lambda$-sized models.
\end{defin}

\subsection{Superstability and symmetry}

We will rely on the following local version of superstability, already implicit in \cite{shvi635} and since then studied in many papers, e.g.\ \cite{vandierennomax, gvv-mlq, indep-aec-apal, bv-sat-afml, gv-superstability-v5-toappear, vandieren-symmetry-apal}. We quote the definition from \cite[10.1]{indep-aec-apal}:

\begin{defin}\label{ss-def}
$\K$ is \emph{$\mu$-superstable} (or \emph{superstable in $\mu$})
if:

  \begin{enumerate}
    \item $\mu \ge \LS (\K)$.
    \item $\K_\mu$ is nonempty, has joint embedding, amalgamation, and no maximal models.
    \item $\K$ is stable in $\mu$.
    \item There are no long splitting chains in $\mu$:

      For any limit ordinal $\delta < \mu^+$, for every sequence $\langle M_i\mid i<\delta\rangle$ of
      models of cardinality $\mu$ with $M_{i+1}$ universal over $M_i$ and for every $p\in\gS(\bigcup_{i < \delta} M_i)$, there exists $i<\delta$ such that $p$ does not $\mu$-split over $M_i$.
\end{enumerate}
\end{defin}

We will also use the concept of symmetry for splitting isolated in \cite{vandieren-symmetry-apal}:

\begin{defin}\label{sym-def}
For $\mu \ge \LS (\K)$, we say that $\K$ has \emph{$\mu$-symmetry} (or \emph{symmetry in $\mu$}) if  whenever models $M,M_0,N\in\K_\mu$ and elements $a$ and $b$  satisfy the conditions (\ref{limit sym cond})-(\ref{last}) below, then there exists  $M^b$  a limit model over $M_0$, containing $b$, so that $\gtp(a/M^b)$ does not $\mu$-split over $N$.
\begin{enumerate} 
\item\label{limit sym cond} $M$ is universal over $M_0$ and $M_0$ is a limit model over $N$.
\item\label{a cond}  $a\in |M|\backslash |M_0|$.
\item\label{a non-split} $\gtp(a/M_0)$ is non-algebraic and does not $\mu$-split over $N$.
\item\label{last} $\gtp(b/M)$ is non-algebraic and does not $\mu$-split over $M_0$. 
\end{enumerate}
\end{defin}

By an argument of Shelah and Villaveces \cite[2.2.1]{shvi635} (see also \cite{shvi-notes-apal}), superstability holds below a categoricity cardinal.

\begin{fact}[The Shelah-Villaveces Theorem]\label{shvi}
  Let $\lambda > \LS (\K)$. Assume that $\K_{<\lambda}$ has amalgamation and no maximal models. If $\K$ has arbitrarily large models and is categorical in $\lambda$, then $\K$ is superstable in any $\mu \in [\LS (\K), \lambda)$.
\end{fact}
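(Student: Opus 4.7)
The plan is to verify each of the four clauses of Definition \ref{ss-def} for a fixed $\mu \in [\LS(\K), \lambda)$. Clause (1) is immediate. For clause (2), amalgamation and no maximal models in $\K_\mu$ are inherited directly from the hypothesis on $\K_{<\lambda}$ together with $\mu < \lambda$, and non-emptiness of $\K_\mu$ follows from arbitrarily large models combined with L\"owenheim-Skolem. For joint embedding in $\K_\mu$: given $M, M' \in \K_\mu$, extend each to a model of cardinality $\lambda$ using no maximal models inside $\K_{<\lambda}$ together with arbitrarily large models, then invoke the categoricity isomorphism in $\lambda$ to identify the two extensions, pulling back to obtain a joint embedding.

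For stability in $\mu$ (clause (3)), I would argue by contradiction. If $|\gS(M)| > \mu$ for some $M \in \K_\mu$, use the many distinct types to construct two non-isomorphic models of cardinality $\lambda$: one realizing a prescribed rich family of types over $M$, one omitting a carefully chosen type. The tree-style construction takes place inside $\K_{<\lambda}$ using amalgamation, and each branch is topped off to $\K_\lambda$ using no maximal models. This directly contradicts categoricity in $\lambda$.

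The main technical work is in clause (4): no long splitting chains. Assume toward contradiction the existence of a limit ordinal $\delta < \mu^+$, a continuous increasing sequence $\langle M_i : i < \delta \rangle$ in $\K_\mu$ with $M_{i+1}$ universal over $M_i$, and a type $p \in \gS(M_\delta)$ (where $M_\delta := \bigcup_{i < \delta} M_i$) that $\mu$-splits over every $M_i$. At each stage $i$, the splitting of $p$ over $M_i$ provides two incompatible non-algebraic extensions of $p \rest M_i$, and universality of $M_{i+1}$ over $M_i$ allows one to realize either choice inside $M_{i+1}$. The plan is to assemble these choices into a binary tree of coherent $\mu$-sized chains, amalgamate each branch, and top off each branch to a model in $\K_\lambda$ using no maximal models. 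Different branches realize distinct type-patterns over $M_\delta$, producing non-isomorphic models in $\K_\lambda$ and contradicting categoricity.

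The hard part will be the last step: ensuring that the $\mu$-level splitting data genuinely distinguishes models at the $\lambda$-level rather than being absorbed upon passage to the larger cardinality. The standard Shelah-Villaveces route is to use categoricity in $\lambda$, via EM-model arguments, to force all models in $\K_\lambda$ to share a uniform ``relative saturation'' behavior over $\mu$-sized submodels, and then to exhibit the splitting chain as an obstruction producing a $\lambda$-sized model that violates this uniformity (by omitting some type it should realize). Executing this reduction from splitting to a categoricity contradiction is the delicate technical core of Shelah-Villaveces, and it is the reason the result is quoted here as a black box.
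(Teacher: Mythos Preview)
The paper does not supply its own proof of this statement: it is recorded as a \emph{Fact} with a citation to \cite{shvi635}, and the subsequent Remark notes that only the successor case $\lambda = \mu^+$ is actually needed, for which an easier argument of Shelah suffices (see \cite[I.6.3]{sh394} or \cite[15.3]{baldwinbook09}). So there is nothing in the paper to compare against; you have gone beyond what the paper does by sketching the argument rather than quoting it.

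On the sketch itself: your treatment of clauses (1) and (2) is fine, and your final paragraph correctly identifies the real mechanism (EM models and categoricity forcing uniform behavior). However, the middle of your clause (4) discussion is misleading. Splitting of $p$ over $M_i$ does not give ``two incompatible non-algebraic extensions of $p \rest M_i$'' that one can freely choose between; it gives two isomorphic-over-$M_i$ submodels of $M_\delta$ on which $p$ restricts differently. A binary-tree-of-branches construction of the sort you describe is closer to first-order $\kappa(T)$ arguments than to what Shelah--Villaveces actually do. The genuine proof runs as you indicate at the end: categoricity in $\lambda$ together with arbitrarily large models makes the model in $\lambda$ an EM model over a well-ordered skeleton, and indiscernibility of that skeleton is then leveraged to show that any type over a $\mu$-sized submodel does not $\mu$-split over a small piece. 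Your instinct that this is the ``delicate technical core'' best left as a black box matches exactly how the paper treats it.
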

\begin{remark}
  We will only use the result when $\lambda$ is a successor (in fact $\lambda = \mu^+$, where $\mu$ is the cardinal where we want to derive superstability). In this case there is an easier proof due to Shelah. See \cite[I.6.3]{sh394} or \cite[15.3]{baldwinbook09}.
\end{remark}

VanDieren \cite{vandieren-symmetry-apal} has shown that (in an AEC with amalgamation and no maximal models) symmetry in $\mu$ follows from categoricity in $\mu^+$. This was improved in \cite[7.3]{vv-symmetry-transfer-afml} and recently in \cite[4.8]{categ-saturated-afml}, but we will only use VanDieren's original result:

\begin{fact}\label{sym-categ}
  If $\K$ is $\mu$-superstable and categorical in $\mu^+$, then $\K$ has symmetry in $\mu$.
\end{fact}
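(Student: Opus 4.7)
The plan is to reduce the result to VanDieren's characterization of $\mu$-symmetry in terms of uniqueness of limit models in $\mu$. Under $\mu$-superstability, $\mu$-symmetry is equivalent to the statement that any two limit models of cardinality $\mu$ over the same base $M_0$ are isomorphic over $M_0$, regardless of the cofinality of their chain resolutions. Granting this equivalence (which is the main technical content of \cite{vandieren-symmetry-apal}), it suffices to derive uniqueness of limit models in $\mu$ from categoricity in $\mu^+$.

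To establish uniqueness of limit models, I would first argue that the unique model $M^*$ of cardinality $\mu^+$ supplied by categoricity is saturated. Since $\mu$-superstability provides amalgamation in $\mu$, joint embedding, no maximal models, and stability in $\mu$, one can construct some saturated model of cardinality $\mu^+$ as the union of a continuous increasing chain of length $\mu^+$ of models in $\K_\mu$, each step realizing all Galois types over the previous one; categoricity then forces $M^*$ itself to be saturated. Now write $M^*$ as a continuous increasing resolution $\langle M_\alpha : \alpha < \mu^+\rangle$ of models in $\K_\mu$ with each $M_{\alpha + 1}$ universal over $M_\alpha$. For $\alpha < \mu^+$ of cofinality $\mu$, I claim $M_\alpha$ is saturated of cardinality $\mu$: a Galois type over any submodel of $M_\alpha$ of cardinality $<\mu$ is realized in $M^*$ by saturation of $M^*$, and by stability in $\mu$ it is already realized in some earlier $M_\beta$ with $\beta < \alpha$. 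Finally, using the no-long-splitting-chains clause of $\mu$-superstability, a standard back-and-forth that aligns non-$\mu$-splitting witnesses shows that limit models of any cofinality $\theta < \mu^+$ over a fixed base are isomorphic to these cofinality-$\mu$ saturated limit models over that base, giving uniqueness.

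With uniqueness of limit models in hand, the conclusion follows by invoking VanDieren's equivalence. The direction we need (uniqueness of limit models implies $\mu$-symmetry under $\mu$-superstability) proceeds by contradiction: assuming a configuration $(M, M_0, N, a, b)$ violating the symmetry statement, one constructs a ``reduced tower'' above this configuration whose reduction is forced to be discontinuous precisely because of the asymmetry in non-splitting between $a$ and $b$. This discontinuity yields two limit models over a common base that fail to be isomorphic, contradicting uniqueness.

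The main obstacle is exactly the uniqueness-implies-symmetry direction of VanDieren's equivalence. It requires a delicate tower analysis, carefully tracking where non-$\mu$-splitting restrictions reduce and where the reduced tower's continuity can fail; the combinatorial bookkeeping is nontrivial and occupies most of \cite{vandieren-symmetry-apal}. By contrast, the categoricity-to-uniqueness-of-limit-models step, while technical, uses only standard saturation and stability manipulations and is essentially a categoricity transfer argument in the style of Shelah and Villaveces.
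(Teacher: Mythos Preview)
The paper does not give its own proof of this statement: it is recorded as a \emph{Fact} and attributed to VanDieren \cite{vandieren-symmetry-apal} as a black box, so there is no in-paper argument to compare against. Your sketch is a plausible reconstruction of the ideas behind VanDieren's result, and you correctly locate the main technical difficulty in the reduced-tower analysis.

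That said, there is a circularity risk in the way you organize the argument. You propose to deduce $\mu$-symmetry from uniqueness of limit models, and to deduce uniqueness of limit models from categoricity in $\mu^+$. But the step you label a ``standard back-and-forth that aligns non-$\mu$-splitting witnesses'' to show that $(\mu,\theta)$-limit models for arbitrary regular $\theta<\mu^+$ are isomorphic over the base is precisely the step that, in the known arguments, \emph{uses} $\mu$-symmetry (or an equivalent tower continuity statement). Superstability alone (no long splitting chains) gives you weak extension and uniqueness for non-splitting, which is enough to match $(\mu,\theta)$-limits of the \emph{same} cofinality, but comparing different cofinalities is exactly where symmetry enters. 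So as written, your route threatens to assume what it is trying to prove. (A minor point: the reason $M_\alpha$ is saturated at cofinality-$\mu$ points is that $M_{\beta+1}$ is universal over $M_\beta$, not stability per se.)

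VanDieren's original argument avoids this circularity by running the contrapositive directly: assuming a failure of $\mu$-symmetry, she constructs a reduced tower that cannot be continuous, and then uses categoricity in $\mu^+$ (via saturation of the unique $\mu^+$-model) to force continuity of reduced towers, yielding the contradiction. The equivalence with uniqueness of limit models is a consequence, not an intermediate step. Your overall picture of the difficulty is right, but the dependency arrows should be reversed.
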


\subsection{Good frames}

Good $\lambda$-frames were introduced by Shelah in \cite[II]{shelahaecbook} as a bare-bone axiomatization of superstability. We give a simplified definition here:

\begin{defin}[{\cite[II.2.1]{shelahaecbook}}]\label{good-frame-def}
  A \emph{good $\lambda$-frame} is a triple $\s = (\K_\lambda, \nf, \Sbs)$ where:

\begin{enumerate}
\item $\K$ is an AEC such that:
  \begin{enumerate}
  \item $\lambda \ge \LS (\K)$.
  \item $\K_\lambda \neq \emptyset$.
  \item $\K_\lambda$ has amalgamation, joint embedding, and no maximal models.
  \item $\K$ is stable\footnote{In Shelah's original definition, only the set of basic types is required to be stable. However full stability follows, see \cite[II.4.2]{shelahaecbook}.} in $\lambda$.
  \end{enumerate}
  \item For each $M \in \K_\lambda$, $\Sbs (M)$ (called the set of \emph{basic types} over $M$) is a set of nonalgebraic Galois types over $M$ satisfying the \emph{density property}: if $M \lta N$ are both in $\K_\lambda$, there exists $a \in |N| \backslash |M|$ such that $\gtp (a / M; N) \in \Sbs (M)$.
  \item $\nf$ is an (abstract) independence relation on the basic types satisfying invariance, monotonicity, extension existence, uniqueness, continuity, local character, and symmetry (see \cite[II.2.1]{shelahaecbook} for the full definition of these properties).
\end{enumerate}

We say that $\s$ is \emph{type-full} \cite[III.9.2(1)]{shelahaecbook} if for any $M \in \K_\lambda$, $\Sbs (M)$ is the set of \emph{all} nonalgebraic types over $M$. Rather than explicitly using the relation $\nf$, we will say that $\gtp (a / M; N)$ \emph{does not fork} over $M_0$ if $\nfs{M_0}{a}{M}{N}$ (this is well-defined by the invariance and monotonicity properties). We say that a good $\lambda$-frame $\s$ is \emph{on $\K$} if the underlying AEC of $\s$ is $\K_\lambda$, and similarly for other variations.
\end{defin}

\begin{remark}
  We will \emph{not} use the axiom (B) \cite[II.2.1]{shelahaecbook} requiring the existence of a superlimit model of size $\lambda$. In fact many papers (e.g.\ \cite{jrsh875}) define good frames without this assumption. Further, we gave a shorter list of properties that in Shelah's original definition, but the other properties follow, see \cite[II.2]{shelahaecbook}. 
\end{remark}

The next technical property is of great importance in Chapter II and III of \cite{shelahaecbook}. The definition below follows \cite[4.1.5]{jrsh875}.

\begin{defin}\label{weakly-succ-def} Let $\lambda \ge \LS (\K)$.
  \begin{enumerate}
  \item\index{amalgam} For $M_0 \lea M_\ell$ all in $\K_\lambda$, $\ell = 1,2$, an \emph{amalgam of $M_1$ and $M_2$ over $M_0$} is a triple $(f_1, f_2, N)$ such that $N \in \K_\lambda$ and $f_\ell : M_\ell \xrightarrow[M_0]{} N$.
  \item\index{equivalence of amalgam} Let $(f_1^x, f_2^x, N^x)$, $x = a,b$ be amalgams of $M_1$ and $M_2$ over $M_0$. We say $(f_1^a, f_2^a, N^a)$ and $(f_1^b, f_2^b, N^b)$ are \emph{equivalent over $M_0$} if there exists $N_\ast \in \K_\lambda$ and $f^x : N^x \rightarrow N_\ast$ such that $f^b \circ f_1^b = f^a \circ f_1^a$ and $f^b \circ f_2^b = f^a \circ f_2^a$, namely, the following commutes:

  \[
  \xymatrix{ & N^a \ar@{.>}[r]^{f^a} & N_\ast \\
    M_1 \ar[ru]^{f_1^a} \ar[rr]|>>>>>{f_1^b} & & N^b \ar@{.>}[u]_{f^b} \\
    M_0 \ar[u] \ar[r] & M_2 \ar[uu]|>>>>>{f_2^a}  \ar[ur]_{f_2^b} & \\
  }
  \]

  Note that being ``equivalent over $M_0$'' is an equivalence relation (\cite[4.3]{jrsh875}).
\item Let $\s = (\K_\lambda, \nf, \Sbs)$ be a good $\lambda$-frame on $\K$.
  \begin{enumerate}
    \item A triple $(a, M, N)$ is a \emph{uniqueness triple} (for $\s$) if $M \lea N$ are both in $\K_\lambda$, $a \in |N| \backslash |M|$, $\gtp (a / M; N) \in \Sbs (M)$, and for any $M_1 \gea M$ in $\K_\lambda$, there exists a \emph{unique} (up to equivalence over $M$) amalgam $(f_1, f_2, N_1)$ of $N$ and $M_1$ over $M$ such that $\gtp (f_1 (a) / f_2[M_1] ; N_1)$ does not fork over $M$.
    \item $\s$ has the \emph{existence property for uniqueness triples} (or is \emph{weakly successful}) if for any $M \in \K_{\lambda}$ and any $p \in \Sbs (M)$, one can write $p = \gtp (a / M; N)$ with $(a, M, N)$ a uniqueness triple.
  \end{enumerate}
  \end{enumerate}
\end{defin}

The importance of the existence property for uniqueness triples is that it allows us to extend the nonforking relation to types of models (rather than just types of length one). This is done by Shelah in \cite[II.6]{shelahaecbook} but was subsequently simplified in \cite{jrsh875}, so we quote from the latter.

\begin{defin}\label{nf-def}
  Let $\s$ be a weakly successful good $\lambda$-frame on $\K$, with $\K$ categorical in $\lambda$.

  \begin{enumerate}
  \item\cite[5.3.1]{jrsh875} Define a 4-ary relation $\NF^\ast = \NF_{\s}^\ast$ on $\K_\lambda$ by $\NF^\ast (N_0, N_1, N_2, N_3)$ if there is $\alpha^\ast < \lambda^+$ and for $\ell = 1,2$ there are increasing continuous sequences $\seq{N_{\ell, i} : i \le \alpha^\ast}$ and a sequence $\seq{d_i : i < \alpha^\ast}$ such that:
    \begin{enumerate}
    \item $\ell < 4$ implies $N_0 \lea N_{\ell} \lea N_3$.
    \item $N_{1,0} = N_0$, $N_{1, \alpha^\ast} = N_1$, $N_{2, 0} = N_2$, $N_{2, \alpha^\ast} = N_3$.
    \item $i \le \alpha^\ast$ implies $N_{1, i} \lea N_{2, i}$.
    \item $d_i \in |N_{1, i + 1}| \backslash |N_{1, i}|$.
    \item $(d_i, N_{1, i}, N_{1, i + 1})$ is a uniqueness triple.
    \item $\gtp (d_i / N_{2, i}; N_{2, i + 1})$ does not fork over $N_{1, i}$.
    \end{enumerate}
  \item \cite[5.3.2]{jrsh875} Define a 4-ary relation $\NF = \NF_{\s}$ on $\K_\lambda$ by $\NF (M_0, M_1, M_2, M_3)$ if there are models $N_0, N_1, N_2, N_3$ such that $N_0 = M_0$, $\ell < 4$ implies $M_\ell \lea N_\ell$ and $\NF^\ast (N_0, N_1, N_2, N_3)$.
  \end{enumerate}
\end{defin}

By \cite[5.5.4]{jrsh875}, $\NF$ satisfies several of the basic properties of forking:

\begin{fact}\label{nf-fact}
  If $\NF (M_0, M_1, M_2, M_3)$, then $M_1 \cap M_2 = M_0$. Moreover, $\NF$ respects $\s$ and satisfies monotonicity, existence, weak uniqueness, symmetry, and long transitivity (see \cite[5.2.1]{jrsh875} for the definitions).
\end{fact}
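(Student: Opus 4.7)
The plan is to unwind the definition of $\NF$ through $\NF^\ast$ and verify each clause by induction on the chain length $\alpha^\ast$, feeding in the corresponding property of uniqueness triples at each successor step and continuity at limits. Most properties are first established for $\NF^\ast$ and then transferred to $\NF$ via the inclusions $M_\ell \lea N_\ell$.

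For disjointness $M_1 \cap M_2 = M_0$, since $M_0 \subseteq M_1 \cap M_2$ is immediate, it suffices to prove $N_1 \cap N_2 = N_0$ in $\NF^\ast$ and intersect. I would argue by induction on $i \le \alpha^\ast$ that $N_{1, i} \cap N_{2, i} = N_0$: the successor step uses that $\gtp(d_i/N_{2,i}; N_{2,i+1})$ is a nonalgebraic nonforking extension of a basic type (so the newly added material is not yet in $N_{2,i}$) together with the uniqueness clause for the triple $(d_i, N_{1,i}, N_{1,i+1})$, which controls how $N_{1,i+1}$ embeds into $N_{2,i+1}$ over $N_{2,i}$. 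Monotonicity follows by interpolating sub-chains via density of basic types and uniqueness triples. Existence is where weak successfulness really enters: given $M_0 \lea M_1, M_2$, enumerate the extension $M_0 \lea M_1$ as a continuous chain of uniqueness triples (possible since basic types are dense and every basic type has a uniqueness-triple representative), then use extension existence in $\s$ to build the parallel chain above $M_2$ step-by-step, producing an amalgam witnessing $\NF$.

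The main obstacle is symmetry. The definition of $\NF^\ast$ is highly asymmetric: the $N_{1,i}$ side supplies the uniqueness triples while the $N_{2,i}$ side only supplies nonforking extensions. To obtain $\NF^\ast(N_0, N_2, N_1, N_3)$ from $\NF^\ast(N_0, N_1, N_2, N_3)$, I would reduce to the atomic case $\alpha^\ast = 1$, where only a single pair $(d, d')$ of elements is added on the two sides, and there apply symmetry of the underlying good frame $\s$ together with the uniqueness clause of uniqueness triples to realize the dual chain. The general case is then pieced together by rewriting the original chain one step at a time. Weak uniqueness is the next delicate point: given two witnessing amalgams $M_3, M_3'$ to $\NF(M_0, M_1, M_2, \cdot)$, one aligns the two underlying $\NF^\ast$-chains stage-by-stage and applies the uniqueness-of-amalgam clause in the definition of a uniqueness triple at each level to stitch together the required equivalence of amalgams over $M_1 \cup M_2$.

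Long transitivity is essentially built into the definition: concatenate the witnessing chains for a sequence of $\NF^\ast$-relations into a single chain of length at most $\alpha^\ast \cdot \gamma$, using continuity of $\seq{N_{1,i}}$ and $\seq{N_{2,i}}$ at limits. That $\NF$ respects $\s$ reduces to showing: if $\NF(M_0, M_1, M_2, M_3)$ and $a \in M_1$ with $\gtp(a/M_0) \in \Sbs(M_0)$, then $\gtp(a/M_2; M_3)$ does not fork over $M_0$ in the sense of $\s$. Locate $a$ in the $N_{1,i}$-chain at some stage $i$ and iteratively exploit the nonforking clauses $\gtp(d_j/N_{2,j}; N_{2,j+1})$ together with transitivity and uniqueness of nonforking in $\s$ (plus the categoricity-in-$\lambda$ assumption to sidestep normalization issues) to transfer nonforking of $\gtp(a/\cdot)$ from the $N_1$ side to $M_2$.
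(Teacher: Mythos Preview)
The paper does not prove this statement: it is quoted as a black-box fact from \cite[5.5.4]{jrsh875} (with the definitions of the listed properties pointed to \cite[5.2.1]{jrsh875}). So there is no ``paper's own proof'' to compare against; the relevant comparison is to the cited Jarden--Shelah argument, and your outline does broadly follow its architecture: build everything first for $\NF^\ast$ by induction along the witnessing chains, then push down to $\NF$ via monotonicity.

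That said, two concrete issues. First, your disjointness induction hypothesis is misstated. By clause (c) of the definition of $\NF^\ast$ you have $N_{1,i} \lea N_{2,i}$, so $N_{1,i} \cap N_{2,i} = N_{1,i}$, not $N_0$; the statement you actually want to induct on is $N_{1,i} \cap N_{2,0} = N_{1,0}$ (equivalently $N_{1,i} \cap N_2 = N_0$), and the successor step uses that $d_i \notin N_{2,i}$ (nonalgebraicity of the nonforking extension) together with the uniqueness-triple control on $N_{1,i+1}$.

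Second, your treatment of symmetry is too thin to count as a plan. Reducing to the ``atomic'' step $\alpha^\ast = 1$ is the right instinct, but even there you need more than frame symmetry: one must produce a uniqueness-triple decomposition on the \emph{other} side and show the two amalgams coincide, which in Jarden--Shelah requires having existence and weak uniqueness for $\NF$ already in hand and running an interleaving argument. The order of dependencies matters: symmetry is proved \emph{after} existence and weak uniqueness, not independently. Similarly, ``respects $\s$'' is not quite a matter of locating $a$ somewhere in the $N_{1,i}$-chain and applying transitivity: the clean route is to first prove weak uniqueness, then compare the given $\NF$-amalgam with one manufactured so that $\gtp(a/M_2)$ is visibly nonforking.
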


Shelah \cite[III.1.1]{shelahaecbook} says a weakly successful good frame is \emph{successful} if an ordering $\leanf$ defined in terms of the relation $\NF$ induces an AEC. We quote the full definition from \cite{jrsh875}.

\begin{defin}\label{succ-def}
  Let $\s$ be a weakly successful good $\lambda$-frame on $\K$, with $\K$ categorical in $\lambda$.
  \begin{enumerate}
  \item\cite[6.1.2]{jrsh875} Define a 4-ary relation $\NFhat = \NFhat_{\s}$ on $\K$ by $\NFhat (N_0, N_1, M_0, M_1)$ if:
    \begin{enumerate}
    \item $\ell < 2$ implies that $N_\ell \in \K_\lambda$, $M_\ell \in \K_{\lambda^+}$.
    \item There is a pair of increasing continuous sequences $\seq{N_{0, \alpha} : \alpha < \lambda^+}$, $\seq{N_{1, \alpha} : \alpha \le \lambda^+}$ such that for every $\alpha < \lambda^+$, $\NF (N_{0, \alpha}, N_{1, \alpha}, N_{0, \alpha + 1}, N_{1, \alpha + 1})$ and for $\ell < 2$, $N_{0, \ell} = N_\ell$, $M_\ell = N_{\ell, \lambda^+}$.
    \end{enumerate}
  \item \cite[6.1.4]{jrsh875} For $M_0 \lea M_1$ both in $\K_{\lambda^+}$, $M_0 \leanf M_1$ if there exists $N_0, N_1 \in \K_\lambda$ such that $\NFhat (N_0, N_1, M_0, M_1)$.
  \item \cite[10.1.1]{jrsh875} $\s$ is \emph{successful} if $\leanf$ satisfies smoothness on the saturated models in $\K_{\lambda^+}$: whenever $\delta < \lambda^{++}$ is limit, $\seq{M_i : i \le \delta}$ is a $\leanf$-increasing continuous sequence of saturated models of cardinality $\lambda^+$, and $N \in \K_{\lambda^+}$ is saturated such that $i < \delta$ implies $M_i \leanf N$, then $M_{\delta} \leanf N$.
  \end{enumerate}
\end{defin}

The point of successful good frames is that they can be extended to a good $\lambda^+$-frame on the class of saturated model of cardinality $\lambda^+$ (see \cite[10.1.9]{jrsh875}). The ordering of the class will be $\leanf$. Shelah also defines what it means for a frame to be $\goodp$. If the frame is successful, being $\goodp$ implies that $\leanf$ is just $\lea$ and simplifies several arguments \cite[III.1.3, III.1.8]{shelahaecbook}:

\begin{defin}\label{goodp-def}
  A good $\lambda$-frame $\s$ on $\K$ is \emph{$\goodp$} when the following is \emph{impossible}:

  There exists an increasing continuous $\seq{M_i : i < \lambda^+}$, $\seq{N_i : i < \lambda^+}$, a basic type $p \in \gS (M_0)$, and $\seq{a_i : i < \lambda^+}$ such that for any $i < \lambda^+$:

  \begin{enumerate}
  \item $i < \lambda^+$ implies that $M_i \lea N_i$ and both are in $\K_\lambda$.
  \item $a_{i + 1} \in |M_{i + 2}|$ and $\gtp (a_{i + 1} / M_{i + 1}; M_{i + 2})$ is a nonforking extension of $p$, but $\gtp (a_{i + 1} / N_0; N_{i + 2})$ is not.
  \item $\bigcup_{j < \lambda^+} M_j$ is saturated.
  \end{enumerate}
\end{defin}

\begin{fact}
  Let $\s$ be a successful good $\lambda$-frame on $\K$. The following are equivalent:

  \begin{enumerate}
  \item\label{succ-equiv-1} $\s$ is $\goodp$.
  \item\label{succ-equiv-2} For $M, N \in \K_{\lambda^+}$ both saturated, $M \leanf N$ if and only if $M \lea N$.
  \end{enumerate}
\end{fact}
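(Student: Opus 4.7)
\medskip

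\textbf{Proof plan.} Both directions hinge on analyzing what $\NFhat$ imposes on saturated models of size $\lambda^+$, so I begin with the easy half: if $\NFhat(N_0, N_1, M, N)$ is witnessed by sequences $\seq{N_{\ell, \alpha}}$, then each square $\NF(N_{0,\alpha}, N_{1,\alpha}, N_{0,\alpha+1}, N_{1,\alpha+1})$ yields $N_{0,\alpha} \lea N_{1,\alpha}$ (Fact \ref{nf-fact}), so passing to the $\lambda^+$-unions gives $M \lea N$. Hence the substance of (\ref{succ-equiv-2}) is the converse inclusion on saturated models.

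\emph{(\ref{succ-equiv-2}) $\Rightarrow$ (\ref{succ-equiv-1}):} I would argue by contrapositive. Suppose $\s$ fails $\goodp$, witnessed by chains $\seq{M_i : i < \lambda^+}$, $\seq{N_i : i < \lambda^+}$, a basic $p \in \gS(M_0)$, and witnesses $\seq{a_i}$ with $M^* := \bigcup_i M_i$ saturated. Extend $\bigcup_i N_i$ to a saturated $N^* \in \K_{\lambda^+}$ (possible since $\s$ is a good frame, so saturated models of size $\lambda^+$ exist and are universal), so that $M^* \lea N^*$ with both saturated. If (\ref{succ-equiv-2}) held we would get $M^* \leanf N^*$, witnessed by some $\NFhat$-resolution $\seq{N_{0,\alpha}', N_{1,\alpha}'}$. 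Choosing $\alpha_0$ large enough that $M_0 \subseteq N_{0, \alpha_0}'$ and $N_0 \subseteq N_{1, \alpha_0}'$, weak uniqueness together with long transitivity of $\NF$ (Fact \ref{nf-fact}), and canonicity of $\s$-nonforking, force $\gtp(a_{i+1}/N_{1, \alpha_0}')$ to be the nonforking extension of $p$ -- hence by monotonicity so is $\gtp(a_{i+1}/N_0; N_{i+2})$, contradicting the failure condition.

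\emph{(\ref{succ-equiv-1}) $\Rightarrow$ (\ref{succ-equiv-2}):} Assume $\goodp$ and fix saturated $M \lea N$ in $\K_{\lambda^+}$. Choose a $\lea$-increasing continuous resolution $\seq{N_{0, \alpha} : \alpha < \lambda^+}$ of $M$ into $\K_\lambda$-models with each $N_{0, \alpha+1}$ universal over $N_{0, \alpha}$. I would build a parallel chain $\seq{N_{1, \alpha} : \alpha \le \lambda^+}$ inside $N$ by induction on $\alpha$: at successors apply the existence property of $\NF$ and embed the resulting $\NF$-amalgam into $N$ using saturation of $N$ together with uniqueness of $\s$-nonforking; at limits take unions and appeal to long transitivity. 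A back-and-forth enumeration along a dense subset of $N$ guarantees $\bigcup_\alpha N_{1, \alpha} = N$. The $\goodp$ hypothesis is invoked at limit stages: it rules out exactly the pathology where an element realizing a nonforking extension of a basic type over an intermediate $N_{0, \alpha}$ fails to realize the nonforking extension over the base $N_{0, 0}$, which is the one obstruction to preserving the $\NF$-structure across the limit.

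\emph{Main obstacle.} The delicate part is the limit-stage continuity in (\ref{succ-equiv-1}) $\Rightarrow$ (\ref{succ-equiv-2}): a coherent chain of $\NF$-squares might fail to extend through a limit $\delta$ because a witness to nonforking over $N_{0, \beta}$ for some $\beta < \delta$ could ``drift'' and fork over $N_{0, 0}$ at stage $\delta$. $\goodp$ is precisely the axiom forbidding this drift; conversely, any instance of its failure manufactures a bad limit that, after saturation-padding, produces saturated $M \lea N$ with $M \not\leanf N$.
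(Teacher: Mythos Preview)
Your $(\ref{succ-equiv-2}) \Rightarrow (\ref{succ-equiv-1})$ direction is essentially the paper's argument. The paper streamlines one step: instead of passing to a saturated $N^* \gea \bigcup_i N_i$ and working with a fresh $\NFhat$-resolution, it invokes \cite[6.1.6]{jrsh875} to obtain directly a club $C \subseteq \lambda^+$ on which the \emph{given} chains satisfy $\NF(M_i, M_j, N_i, N_j)$ for $i < j$ in $C$. For any $i \in C$, since $\NF$ respects $\s$ (Fact \ref{nf-fact}), $\gtp(a_{i+1}/N_i; N_{i+2})$ does not fork over $M_i$; combined with the hypothesis that $\gtp(a_{i+1}/M_{i+1})$ does not fork over $M_0$, transitivity gives nonforking over $M_0$ --- the desired contradiction. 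Your detour through $N^*$ works too but requires an extra club argument to align the new resolution with the original $\seq{M_i}, \seq{N_i}$.

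For $(\ref{succ-equiv-1}) \Rightarrow (\ref{succ-equiv-2})$ the paper simply cites \cite[III.1.8]{shelahaecbook}. Your sketch is in the right direction, but your diagnosis of where $\goodp$ enters is off. Long transitivity of $\NF$ (Fact \ref{nf-fact}) already handles limit stages of the inductive construction; no extra hypothesis is needed there. The genuine obstruction is at \emph{successor} stages: having built $N_{0,\alpha} \lea N_{1,\alpha}$ inside $N$ and chosen $N_{0,\alpha+1} \lea M$, you need each element of $N_{0,\alpha+1} \setminus N_{0,\alpha}$ --- which already lives in $N$ --- to realize over $N_{1,\alpha}$ the nonforking extension of its type over $N_{0,\alpha}$, so that the abstract $\NF$-amalgam can be found inside $N$. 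A failure of $\goodp$ is precisely the statement that this can go wrong: some $a_{i+1}$ in the saturated tower $\bigcup_i M_i$ realizes the nonforking extension of $p$ over $M_{i+1}$ yet forks over the side model $N_0$. So $\goodp$ is what lets you embed the amalgam in $N$, not what rescues continuity at limits.
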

\begin{proof}
  (\ref{succ-equiv-1}) implies (\ref{succ-equiv-2}) is \cite[III.1.8]{shelahaecbook}. Let us see that (\ref{succ-equiv-2}) implies (\ref{succ-equiv-1}): Suppose for a contradiction that $\seq{M_i : i < \lambda^+}$, $\seq{N_i : i < \lambda^+}$, $p$, $\seq{a_i : i < \lambda^+}$ witness that $\s$ is \emph{not} $\goodp$. Write $M_{\lambda^+} := \bigcup_{i < \lambda^+} M_i$, $N_{\lambda^+} := \bigcup_{i < \lambda^+} N_i$. Using \cite[6.1.6]{jrsh875}, we have that there exists a club $C \subseteq \lambda^+$ such that for any $i < j$ both in $C$, $\NF (M_i, M_j, N_i, N_j)$. In particular (by monotonicity), $\NF (M_i, M_{i + 2}, N_i, N_{i + 2})$. Pick any $i \in C$. Because $\NF$ respects $\s$ (Fact \ref{nf-fact}), $\gtp (a_{i + 1} / N_i; N_{i + 2})$ does not fork over $M_i$. By the properties of $\seq{a_i : i < \lambda^+}$, $\gtp (a_{i + 1} / M_{i + 1}; M_{i + 2})$ is a nonforking extension of $p$. By transitivity, $\gtp (a_{i + 1} / N_i; N_{i + 2})$ also is a nonforking extension of $p$, contradicting the definition of $\goodp$.
\end{proof}

\begin{fact}[{\cite[III.1.8]{shelahaecbook}}]\label{succ-fact}
  Let $\s$ be a successful $\goodp$ $\lambda$-frame on $\K$. Then there exists a good $\lambda^+$-frame $\s^+$ with underlying AEC the saturated models in $\K$ of size $\lambda^+$ (ordered with the strong substructure relation inherited from $\K$).
\end{fact}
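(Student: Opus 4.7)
The plan is to construct $\s^+$ in three stages: (1) identify the underlying AEC, (2) define the basic types and the nonforking relation, (3) verify the axioms of a good frame.

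Stage 1: Let $\K^\ast$ denote the class of saturated models in $\K$ of size $\lambda^+$, ordered by the strong substructure relation inherited from $\K$. I need $\K^\ast$ to be an AEC in $\lambda^+$ (with $\LS(\K^\ast) = \lambda^+$). Amalgamation, joint embedding, and no maximal models in $\K^\ast$ follow from the corresponding properties of $\s$ applied to resolutions by $\K_\lambda$-chains, using $\NF$ and Fact \ref{nf-fact}. The main point is smoothness: a $\lea$-increasing chain of saturated models of size $\lambda^+$ must have saturated union. By the fact preceding Fact \ref{succ-fact}, the $\goodp$ hypothesis forces $\lea$ to coincide with $\leanf$ on the saturated models of size $\lambda^+$. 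Successfulness (Definition \ref{succ-def}(3)) is precisely smoothness of $\leanf$ on this class, so smoothness of $\lea$ on $\K^\ast$ follows at once.

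Stage 2: Declare every nonalgebraic Galois type over a model in $\K^\ast$ to be basic, so $\s^+$ will be type-full. For $M_0 \lea M$ both in $\K^\ast$ and $p = \gtp(a/M; N)$ nonalgebraic, say $p$ does not $\s^+$-fork over $M_0$ if there are $\lea$-increasing continuous resolutions $\seq{M_{0,i} : i < \lambda^+}$, $\seq{M_i : i < \lambda^+}$, $\seq{N_i : i < \lambda^+}$ by $\K_\lambda$-models with $M_{0,i} \lea M_i \lea N_i$, unions $M_0$, $M$, $N$ respectively, together with an $i^\ast < \lambda^+$ such that for every $i \ge i^\ast$, $\gtp(a / M_i; N_i)$ is a nonforking extension of $\gtp(a / M_{0, i^\ast}; N_{i^\ast})$ in the sense of $\s$. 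Independence of this definition from the choice of resolution is a routine club argument from local character and uniqueness of $\s$.

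Stage 3: Verify each good-frame axiom at $\lambda^+$. Density is immediate (all nonalgebraic types are basic). Stability in $\lambda^+$ follows from the fact that $M \in \K^\ast$ is saturated of size $\lambda^+$ together with stability of $\s$ in $\lambda$: any two types over $M$ that agree on a common $\K_\lambda$-submodel and fail to fork over it must coincide. Invariance and monotonicity are built into the definition. Local character, continuity, and transitivity transfer from the corresponding properties of $\s$ along the resolution by standard cofinality arguments. Extension existence and uniqueness at the $\lambda^+$-level reduce, via the relations $\NF$ and $\NFhat$, to the same properties of $\s$ level by level, using Fact \ref{nf-fact} and the long-transitivity clause; symmetry likewise reduces to symmetry of $\NF$ by resolving the two relevant elements' types in parallel chains.

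The main obstacle will be uniqueness of $\s^+$-nonforking extensions, where two candidate extensions of the same type over $M_0$ must be shown equal over $M$. Here one must run two $\NFhat$-constructions simultaneously on a shared resolution and apply weak uniqueness of $\NF$ cofinally; the delicate point is to guarantee that one does not have to replace the resolution by a cofinal subsequence that would only witness equality of the tail types rather than equality over $M$. The $\goodp$ hypothesis is exactly what prevents such a divergence, since by the preceding fact it forces $\lea$-compatibility with $\leanf$ on the saturated limit and thus lets uniqueness from the $\NF$-level lift cleanly to $\s^+$.
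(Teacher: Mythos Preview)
The paper does not prove this statement: it is recorded as a \emph{Fact} and attributed to \cite[III.1.8]{shelahaecbook} (with the broader construction of $\s^+$ deferred to \cite[10.1.9]{jrsh875}). There is therefore no in-paper argument to compare your proposal against; the authors use this result as a black box.

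As a standalone sketch, your outline is in the right spirit but diverges from the cited construction in one important respect and is thin in several places. First, you declare $\s^+$ to be type-full. Shelah's $\s^+$ in \cite[III.1]{shelahaecbook} is \emph{not} built as a type-full frame; its basic types are those induced from basic $\s$-types along a resolution, and the passage to a type-full frame is a separate (and later) result, recorded here as Fact~\ref{type-full-succ}. By taking all nonalgebraic types as basic you are implicitly promising extension, uniqueness, and symmetry for arbitrary types at $\lambda^+$, which is more than what the resolution argument you describe actually delivers without further work. Second, your verifications of extension and symmetry are essentially placeholders: in the standard treatment these are obtained from the properties of $\NF$ and $\NFhat$ (Fact~\ref{nf-fact} and Definition~\ref{succ-def}), not from ``resolving the two relevant elements' types in parallel chains''; symmetry in particular is not a routine cofinality argument. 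Finally, your identification of uniqueness as the ``main obstacle'' and your appeal to $\goodp$ there are slightly misplaced: the role of $\goodp$ (via the equivalence just before Fact~\ref{succ-fact}) is to ensure that $\leanf$ and $\lea$ agree on saturated models, which is what makes $\K^\ast$ with the inherited $\lea$ an AEC in the first place; the frame axioms for $\s^+$ are then read off from $\NF$ rather than salvaged by $\goodp$.
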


We will also use that successful $\goodp$ frame can be extended to be type-full.

\begin{fact}[{\cite[III.9.6(2B)]{shelahaecbook}}]\label{type-full-succ}
  If $\s$ is a successful $\goodp$ $\lambda$-frame on $\K$ and $\K$ is categorical in $\lambda$, then there exists a \emph{type-full} successful $\goodp$ $\lambda$-frame $\ts$ with underlying class $\K_\lambda$.
\end{fact}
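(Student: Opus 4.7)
The plan is to define a type-full frame $\ts$ on $\K_\lambda$ by using the four-ary nonforking relation $\NF = \NF_{\s}$ on models (Fact \ref{nf-fact}) to define independence for all nonalgebraic types of singletons. Concretely, for $M_0 \lea M$ in $\K_\lambda$ and a nonalgebraic $p \in \gS(M)$, declare that $p$ does \emph{not} $\ts$-fork over $M_0$ if there exist $a, M_1, N$ in $\K_\lambda$ with $M \lea N$, $a \in |M_1|$, $M_0 \lea M_1 \lea N$, $p = \gtp(a/M;N)$, and $\NF(M_0, M_1, M, N)$. The existing basic types set will be replaced by all nonalgebraic types. Categoricity in $\lambda$ and the weak uniqueness of $\NF$ (Fact \ref{nf-fact}) will be used to show that the definition depends only on $p$ and $M_0$.

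Step one is the structural axioms. Amalgamation, joint embedding, no maximal models, stability, and density are free (density is trivial once all nonalgebraic types are basic; stability holds by the good frame assumption on $\s$). Invariance and monotonicity are inherited from $\NF$. For existence of extensions, given $p \in \gS(M_0)$ nonalgebraic and $M_0 \lea M$, first realize $p$ by some $a$ in an $N_0 \gea M_0$ of size $\lambda$; then use the existence property of $\NF$ to get some $N \gea M$ with $\NF(M_0, N_0, M, N)$, and the image of $p$ over $M$ inside $N$ is the desired nonforking extension. Uniqueness of this extension reduces to weak uniqueness of $\NF$ after using amalgamation to send competing realizations into a common ambient model. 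Symmetry for $\ts$ is immediate from symmetry of $\NF$.

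Step two is local character and continuity, which I expect to be the main obstacle. Given an increasing continuous $\seq{M_i : i \le \delta}$ with $\delta < \lambda^+$ regular limit and $p \in \gS(M_\delta)$, I want some $i_0 < \delta$ over which $p$ does not $\ts$-fork. I fix a realization $a \in N \gea M_\delta$ in $\K_\lambda$ and build, by induction on $i < \delta$, an increasing continuous $\NF$-resolution $\seq{N_i : i \le \delta}$ with $a \in |N_0|$ and $\NF(M_i, N_0, M_\delta, N)$ at each stage, using long transitivity of $\NF$ (Fact \ref{nf-fact}) at limits. The $\goodp$ hypothesis is what prevents a ``jump'' at a limit stage from producing fresh forking; its equivalent formulation that $\leanf$ and $\lea$ agree on saturated models lets me lift stabilization from the basic-type ladder to $p$ itself. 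Continuity follows by a similar limit argument using long transitivity.

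Step three is to transfer successfulness and $\goodp$ from $\s$ to $\ts$. Because every $\s$-uniqueness triple is still a $\ts$-uniqueness triple (the defining amalgamation property is against \emph{all} extensions $M_1 \gea M$ and \emph{all} basic, hence in particular all $\ts$-nonforking, types), and because the $\NF$-relation computed from $\ts$-uniqueness triples via Definition \ref{nf-def} agrees with $\NF_{\s}$ by weak uniqueness, the ordering $\leanf$ is the same for $\s$ and $\ts$, so successfulness passes up verbatim. The $\goodp$ property for $\ts$ likewise reduces to that of $\s$: any putative failure of $\goodp$ witnessed by a type $p$ over a saturated union must already be witnessed by a basic-type approximation, using density of basic types inside every nonalgebraic type together with the local character just established. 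The hardest single step is verifying local character at limits of cofinality $<\lambda$, where the interaction between $\goodp$, long transitivity, and the fact that $p$ itself need not be basic requires the most careful bookkeeping.
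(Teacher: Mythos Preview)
The paper does not prove this statement: it is listed as a \emph{Fact} with a citation to \cite[III.9.6(2B)]{shelahaecbook} and is used as a black box. There is therefore no proof in the paper to compare your proposal against.

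That said, your outline is in the right spirit---Shelah's construction in \cite[III.9]{shelahaecbook} does proceed by using the model-level relation $\NF$ to define a type-full forking relation---but your Step three has a real gap. You argue that every $\s$-uniqueness triple is a $\ts$-uniqueness triple and that therefore $\NF_{\ts} = \NF_{\s}$, hence successfulness transfers. But for $\ts$ to be \emph{weakly} successful you need a uniqueness triple $(a,M,N)$ for \emph{every} nonalgebraic $p \in \gS(M)$, not only for those $p$ that happen to lie in $\Sbs(M)$. The $\s$-uniqueness triples only cover the old basic types, so you have not produced uniqueness triples for the new (formerly non-basic) types at all; without that, $\NF_{\ts}$ is not even defined via Definition~\ref{nf-def}, and the transfer of successfulness collapses. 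Shelah's argument handles this by a substantially longer development (domination, decomposition of arbitrary types over uniqueness triples, etc.), and your sketch gives no indication of how you would fill this in. Your local character argument in Step two is also too vague to assess: you assert that $\goodp$ ``prevents a jump'' but do not explain the mechanism, and the appeal to the equivalence with $\leanf = \lea$ on saturated models is about $\K_{\lambda^+}$, not about chains of length $<\lambda^+$ inside $\K_\lambda$, so it is not clear how it applies.
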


The next result derives good frames from some tameness and categoricity. The statement is not optimal (e.g.\ categoricity in $\lambda^+$ can be replaced by categoricity in any $\mu > \lambda$) but suffices for our purpose.

\begin{fact}\label{good-frame-constr}
  Assume that $\K$ has amalgamation and arbitrarily large models. Let $\LS (\K) < \lambda$ be such that $\K$ is categorical in both $\lambda$ and $\lambda^+$. Let $\kappa \le \LS (\K)$ be an infinite regular cardinal such that $\LS (\K) = \LS (\K)^{<\kappa}$ and $\lambda = \lambda^{<\kappa}$.

  If $\K$ is $(\LS (\K), \le \lambda)$-tame, then there is a type-full good $\lambda$-frame $\s$ on $\K$. If in addition $\K$ is $(\LS (\K), \le \lambda)$-tame for $(<\kappa)$-length types and $(<\kappa, \le \lambda)$-type-short over $\lambda$-sized models, then $\s$ is weakly successful.
\end{fact}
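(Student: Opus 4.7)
My plan is to handle the two parts separately, reducing each to abstract frame-construction results.

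For the existence of the type-full good $\lambda$-frame, amalgamation and categoricity in $\lambda^+$ together with Fact \ref{shvi} yield that $\K$ is superstable in $\lambda$, and Fact \ref{sym-categ} then gives $\lambda$-symmetry (of splitting). With these in hand I would define $\s$ by letting $\Sbs(M)$ consist of all nonalgebraic types over $M \in \K_\lambda$, and declaring that $\gtp(a/M; N)$ does not fork over $M_0 \lea M$ iff there is $M_0' \lea M_0$ in $\K_{\LS(\K)}$ over which the type does not $\LS(\K)$-split. Invariance, monotonicity, local character, continuity, and extension existence follow from superstability; the crucial uniqueness axiom is where $(\LS(\K), \le \lambda)$-tameness enters, since any two nonforking extensions of a type agree on every $\LS(\K)$-sized substructure, and tameness then forces them equal; symmetry of $\nf$ is inherited from $\lambda$-symmetry of splitting. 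This is precisely the abstract construction performed in \cite{ss-tame-jsl}, which one may invoke directly.

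For the weakly successful part, I would adapt the domination argument of \cite[Section 11]{indep-aec-apal}. The hypothesis $\lambda = \lambda^{<\kappa}$ lifts stability in $\lambda$ to stability for $(<\kappa)$-length types in $\lambda$, and $\LS(\K) = \LS(\K)^{<\kappa}$ ensures local character at $\LS(\K)$ for such types; combined with $(\LS(\K), \le \lambda)$-tameness for $(<\kappa)$-length types, this yields a nonforking relation for $(<\kappa)$-length types over models in $\K_\lambda$ satisfying existence and uniqueness of nonforking extensions. Given a basic $p = \gtp(a/M; N)$, replace $N$ by some $N^\ast \in \K_\lambda$ universal over $M$ with $a \in |N^\ast|$; I would then claim that $(a, M, N^\ast)$ is a uniqueness triple. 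Given two amalgams $(f_1, f_2, N_1)$ and $(f_1', f_2', N_1')$ of $N^\ast$ and $M_1$ over $M$ in which the types of $f_1(a)$ and $f_1'(a)$ over the respective images of $M_1$ do not fork over $M$, one applies $(<\kappa, \le \lambda)$-type-shortness over $\lambda$-sized models to reduce equality of the $\lambda$-length types $\gtp(f_1[N^\ast]/f_2[M_1]; N_1)$ and $\gtp(f_1'[N^\ast]/f_2'[M_1]; N_1')$ to equality of all their $(<\kappa)$-length restrictions. These restrictions are all nonforking extensions (in the $(<\kappa)$-length theory) of the corresponding restriction of $\gtp(N^\ast/M)$, whose $a$-coordinate is the fixed nonforking extension of $p$; uniqueness at length $<\kappa$ then forces them to agree, so the long types coincide, producing the desired equivalence of amalgams.

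The main obstacle is the second part, and specifically the step asserting that once the $a$-coordinate has nonforking type, the full $(<\kappa)$-length restriction is forced. This is the ``domination'' point, and it requires that the extension of $\nf$ from length $1$ to length $<\kappa$ retains uniqueness; tameness for $(<\kappa)$-length types gives that uniqueness, while type-shortness converts it to uniqueness of $\lambda$-length types, and the two cardinal-arithmetic assumptions are exactly what is needed to keep the stability theory alive at $(<\kappa)$ length.
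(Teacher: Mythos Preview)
Your approach matches the paper's: it too derives superstability in $[\LS(\K),\lambda]$ and $\lambda$-symmetry from Facts~\ref{shvi} and~\ref{sym-categ}, then invokes an external frame-construction result (the paper cites \cite[6.4]{vv-symmetry-transfer-v4-toappear} rather than \cite{ss-tame-jsl}, but the underlying non-splitting frame is the same), and for the weakly successful clause simply quotes \cite[3.13]{downward-categ-tame-apal}, whose proof is a packaging of the domination argument from \cite[Section~11]{indep-aec-apal} that you outline.

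Your unpacking of that argument, however, has a genuine gap at the sentence ``These restrictions are all nonforking extensions \ldots\ whose $a$-coordinate is the fixed nonforking extension of $p$; uniqueness at length $<\kappa$ then forces them to agree.'' Knowing that $\gtp(f_1(a)/f_2[M_1])$ does not fork over $M$ does \emph{not} by itself imply that $\gtp(\bar b/f_2[M_1])$ does not fork over $M$ for an arbitrary $(<\kappa)$-tuple $\bar b$ from $f_1[N^\ast]$, and without this you cannot invoke uniqueness to conclude the two amalgams realize the same long type. Establishing that $a$ dominates all of $N^\ast$ over $M$ is the substantive content of the result; this is where universality of $N^\ast$ over $M$ is actually used, via a resolution of $N^\ast$ over $M$ in which each step is controlled by nonforking, together with local character and transitivity for the $(<\kappa)$-length relation. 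Your final paragraph conflates this domination step with the easier fact that the $(<\kappa)$-length nonforking relation has uniqueness. The paper sidesteps the issue by citing the already-packaged \cite[3.13]{downward-categ-tame-apal}; if you want a self-contained proof you must supply the domination lemma explicitly.
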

\begin{proof}
  By Facts \ref{shvi} and \ref{sym-categ}, $\K$ is superstable in any $\mu \in [\LS (\K), \lambda]$, and has $\lambda$-symmetry. By \cite[6.4]{vv-symmetry-transfer-afml}, there is a type-full good $\lambda$-frame $\s$ on $\K_\lambda$. For the last sentence is by \cite[3.13]{downward-categ-tame-apal}.
\end{proof}

Fact \ref{good-frame-constr} gives a criteria for when a good frame is \emph{weakly} successful, but when is it successful? This is answered by the next result, due to Adi Jarden \cite[7.19]{jarden-tameness-apal} (note that the conjugation hypothesis there follows from \cite[III.1.21]{shelahaecbook}).

\begin{fact}\label{succ-criteria}
  Let $\s$ be a weakly successful good $\lambda$-frame on $\K$. If $\K$ is categorical in $\lambda$, has amalgamation in $\lambda^+$, and is $(\lambda, \lambda^+)$-tame, then $\s$ is successful $\goodp$.
\end{fact}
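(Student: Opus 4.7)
The plan is to prove successfulness and $\goodp$ separately, with tameness playing the role of a transfer tool from $\lambda$ to $\lambda^+$ in both arguments. I would first assemble the supporting infrastructure: by Fact \ref{nf-fact}, $\NF$ at level $\lambda$ satisfies monotonicity, weak uniqueness, symmetry, and long transitivity, and by \cite[6.1.6]{jrsh875} any $\NFhat$-relation on $\lambda^+$-sized models is witnessed on a club of $\lambda$-sized approximations. Tameness $(\lambda, \lambda^+)$ will then imply that equality of types (and hence nonforking, via uniqueness) at $\lambda^+$ is controlled by the $\lambda$-sized restrictions.

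For successfulness, I would verify smoothness of $\leanf$ directly. Given an increasing continuous $\leanf$-chain $\seq{M_i : i \le \delta}$ of saturated models of cardinality $\lambda^+$ with $\delta<\lambda^{++}$ limit, and a saturated $N \in \K_{\lambda^+}$ with $M_i \leanf N$ for all $i<\delta$, I would unfold each $M_i \leanf N$ into $\NF$-witnesses $\seq{N^i_\alpha, P^i_\alpha : \alpha<\lambda^+}$. Using categoricity in $\lambda$ (to harmonize isomorphism types of $\lambda$-sized submodels), amalgamation in $\lambda^+$, and long transitivity of $\NF$, I would coherently amalgamate these witness systems via a back-and-forth along a common filtration of $M_\delta$ and $N$ by $\lambda$-sized models. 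Tameness enters when verifying that the glued filtration actually witnesses $\NFhat(N_0, N_1, M_\delta, N)$ for appropriate $\lambda$-sized $N_0 \lea N_1$: the required identity of types over the $\lambda^+$-sized bases follows from identity on a cofinal chain of $\lambda$-sized submodels.

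For $\goodp$, I would assume for contradiction that $\seq{M_i}$, $\seq{N_i}$, $p \in \Sbs(M_0)$, and $\seq{a_i : i < \lambda^+}$ witness the failure as in Definition \ref{goodp-def}. The idea is to use local character plus tameness to contradict the uniqueness of nonforking extensions at $\lambda^+$. Let $M_\infty := \bigcup_i M_i$ and $N_\infty := \bigcup_i N_i$. Since $M_\infty$ is saturated, there is a unique nonforking extension $q \in \gS(M_\infty)$ of $p$; similarly, by extension existence within the frame, $p$ has a nonforking extension $q' \in \gS(N_0)$, and then a further nonforking extension $q'' \in \gS(N_\infty)$. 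By assumption, $\gtp(a_{i+1}/M_{i+1}; M_{i+2})$ is the nonforking extension of $p$, so $\gtp(a_{i+1}/M_{i+1}) = q \rest M_{i+1}$ for each $i$. Via the $\NF$ witnesses inside $\lambda$-sized chunks and weak uniqueness, this forces $\gtp(a_{i+1}/N_0) = q'$; by transitivity along the nonforking calculus, $\gtp(a_{i+1}/N_0; N_{i+2})$ should itself be a nonforking extension of $p$, contradicting clause (2) of $\goodp$. The residual obstacle is getting $q'$ to actually agree with $\gtp(a_{i+1}/N_0)$ over all of $N_0$, not merely on $\lambda$-sized pieces, and this is exactly where $(\lambda,\lambda^+)$-tameness is used.

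The hard part is the bookkeeping in the successfulness step: one must choreograph two $\lambda^+$-long systems of $\lambda$-sized $\NF$-witnesses (one for each $M_i \leanf N$) into a single system witnessing $M_\delta \leanf N$, while controlling club intersections so that tameness can be applied to identify types over $\lambda^+$-sized bases. The $\goodp$ step is comparatively cleaner once the extension/uniqueness dictionary between $\lambda$ and $\lambda^+$ is set up via tameness.
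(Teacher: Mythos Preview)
The paper does not prove this fact; it is quoted from Jarden \cite[7.19]{jarden-tameness-apal}, with the remark that the conjugation hypothesis needed there follows from categoricity in $\lambda$ via \cite[III.1.21]{shelahaecbook}. So there is no in-paper proof to compare against, only Jarden's external argument.

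Your outline has a genuine gap in the $\goodp$ step. You assert that ``via the $\NF$ witnesses inside $\lambda$-sized chunks and weak uniqueness, this forces $\gtp(a_{i+1}/N_0) = q'$,'' but you have not set up any $\NF$ relation linking the $M_i$-chain to the $N_i$-chain; the definition of $\goodp$ only gives $M_i \lea N_i$, not any independence between them. Knowing that $\gtp(a_{i+1}/M_{i+1})$ is the nonforking extension of $p$ says nothing about $\gtp(a_{i+1}/N_0)$ absent such a square, and $(\lambda,\lambda^+)$-tameness does not help here since $N_0 \in \K_\lambda$. Your successfulness sketch is also too thin at the decisive point: ``coherently amalgamate these witness systems via a back-and-forth'' names the difficulty rather than resolving it, and it is not clear how tameness would enter that bookkeeping.

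Jarden's route is structurally different and avoids both problems. Rather than proving smoothness and $\goodp$ separately, he shows (using tameness, amalgamation in $\lambda^+$, and conjugation from categoricity) that for saturated $M, N \in \K_{\lambda^+}$ one has $M \leanf N$ if and only if $M \lea N$. Once $\leanf$ coincides with $\lea$ on saturated models, successfulness is immediate (smoothness of $\lea$ is just the coherence axiom of an AEC), and $\goodp$ then follows from the equivalence stated just after Definition~\ref{goodp-def} in this paper --- indeed that equivalence is exactly how the paper packages the passage from $\leanf = \lea$ to $\goodp$, and its proof uses the $\NF$-club from \cite[6.1.6]{jrsh875}, which is available precisely because $\leanf$ has already been established. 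So the order of dependencies is the reverse of what you propose: one does not prove $\goodp$ first and successfulness second; one proves $\leanf = \lea$, gets successfulness for free, and reads off $\goodp$.
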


We will also make use of the following result, which tells us that if the AEC is categorical, there can be at most one good frame \cite[9.7]{indep-aec-apal}:

\begin{fact}[Canonicity of categorical good frames]\label{canon-fact}
  Let $\s$ and $\ts$ be good $\lambda$-frame on $\K$ with the same basic types. If $\K$ is categorical in $\lambda$, then $\s = \ts$.
\end{fact}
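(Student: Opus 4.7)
The plan is to characterize the nonforking relation of any good $\lambda$-frame on a $\lambda$-categorical AEC purely in terms of the underlying class and its basic types---specifically, via non-$\lambda$-splitting over a universal submodel. Because this characterization is intrinsic to $\K$ and does not mention the chosen independence relation, it must coincide with both $\s$-nonforking and $\ts$-nonforking, forcing $\s = \ts$.

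The crux is the following equivalence, which I would establish for any good $\lambda$-frame $\s$ on $\K$: for $M_0 \lea M$ in $\K_\lambda$ with $M$ universal over $M_0$, and $p \in \Sbs (M)$, the type $p$ does not $\s$-fork over $M_0$ iff $p$ does not $\lambda$-split over $M_0$. The forward direction is the standard argument: if an isomorphism $f \colon N_1 \cong_{M_0} N_2$ with $N_1, N_2 \lea M$ in $\K_\lambda$ witnesses $\lambda$-splitting of $p$, then by invariance and monotonicity of $\s$, both $f(p \rest N_1)$ and $p \rest N_2$ are $\s$-nonforking extensions of $p \rest M_0$ to $N_2$, contradicting uniqueness. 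For the reverse direction, invoke extension existence to obtain a $\s$-nonforking extension $q \in \Sbs (M)$ of $p \rest M_0$; the forward direction shows $q$ does not $\lambda$-split over $M_0$, and then $p = q$ follows from the classical uniqueness of non-$\lambda$-splitting extensions over universal submodels, a consequence of stability in $\lambda$ (which is built into the good frame axioms).

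Once this characterization is in hand, canonicity follows by a short reduction. Given $p \in \Sbs (M)$ not $\s$-forking over $M_0$, use $\ts$-extension to obtain $p' \in \Sbs (M)$ extending $p \rest M_0$ with $p'$ not $\ts$-forking over $M_0$. Using amalgamation and no maximal models in $\K_\lambda$, pick $M^+ \gea M$ in $\K_\lambda$ that is universal over $M_0$. Extend $p$ to $p^+ \in \Sbs (M^+)$ via $\s$-nonforking and $p'$ to $p'^+ \in \Sbs (M^+)$ via $\ts$-nonforking. Applying the characterization to $\s$ and $\ts$ separately, neither $p^+$ nor $p'^+$ $\lambda$-splits over $M_0$; since they share the same restriction to $M_0$, the AEC-level uniqueness of non-splitting extensions forces $p^+ = p'^+$, hence $p = p'$, and $p$ does not $\ts$-fork over $M_0$. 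The roles of $\s$ and $\ts$ are symmetric, so equality of the two nonforking relations follows.

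The main obstacle I expect is bookkeeping around basic versus arbitrary types: one has to check that all restrictions and extensions encountered remain basic so that the good frame axioms (uniqueness, extension existence) apply at every step, and that the stability supplied by the good frame is enough to drive the classical uniqueness-of-non-splitting-extensions lemma at the AEC level. Both issues are essentially technical, and once navigated the argument reduces to the conceptual point that a good frame on a categorical $\K_\lambda$ is pinned down by its set of basic types.
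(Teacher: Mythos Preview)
The paper does not prove this fact; it is quoted from \cite[9.7]{indep-aec-apal}. Your overall strategy---show that $\s$-nonforking is determined by non-$\lambda$-splitting, a frame-independent notion---is exactly the standard one, and your forward direction (nonforking over $M_0$ implies non-$\lambda$-splitting over $M_0$, via invariance and uniqueness) is correct.

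There is, however, a genuine gap in how you invoke weak uniqueness of non-splitting. The version available (Fact \ref{ns-facts} in this paper) says: if $p, q \in \gS(N)$ do not $\lambda$-split over $M_0$ and agree on some $M$ with $M$ \emph{universal over} $M_0$, then $p = q$. You instead use agreement on the base $M_0$ itself, both in the reverse direction of your characterization and in the final step (``since they share the same restriction to $M_0$''). That is not what weak uniqueness gives you, and there is no $M_0$-universal intermediate available in your setup.

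The repair is to exploit categoricity in $\lambda$ more fully: $M_0$ is a limit model, so resolve it as an increasing continuous chain $\seq{M_0^i : i < \delta}$ with $M_0^{i+1}$ universal over $M_0^i$. Apply local character in $\s$ to $p \rest M_0$ and local character in $\ts$ to $p' \rest M_0 = p \rest M_0$ to find a single $i$ with both restrictions nonforking (in the respective frames) over $M_0^i$; transitivity then pushes $\s$-nonforking of $p^+$ and $\ts$-nonforking of $p'^+$ down to $M_0^i$, and your forward direction gives non-$\lambda$-splitting of both over $M_0^i$. Now $M_0$ is universal over $M_0^i$ and $p^+ \rest M_0 = p'^+ \rest M_0$, so Fact \ref{ns-facts} applies legitimately and yields $p^+ = p'^+$. (Compare the uniqueness clause in the proof of Theorem \ref{good-frame-thm}, which carries out exactly this ``go down to a common chain element'' maneuver.) Note that with this fix only the forward direction of your characterization is actually used; the reverse direction has the same gap and is not needed for the canonicity conclusion.
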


\section{Preliminaries: Hart-Shelah}
\begin{defin}
  Fix $n \in [2, \omega)$.  Let $\K^n$ be the AEC from the Hart-Shelah example.  This class is $\Ll_{\omega_1, \omega}$-definable and a model in $\K^n$ consists of the following:
\begin{itemize}
	\item $I$, some arbitrary index set
	\item $K = [I]^n$ with a membership relation for $I$
	\item $H$ is a copy of $\mathbb{Z}_2$ with addition
	\item $G = \oplus_{u \in K} \mathbb{Z}_2$ with the evaluation map from $G \times K$ to $\mathbb{Z}_2$ and functions that indicate the support of $G$
	\item $G^*$ is a set with a projection $\pi_{G^*}$ onto $K$ such that there is a 1-transitive action of $G$ on each stalk $G^*_u = \pi_{G^*}^{-1}(u)$; we denote this action by $t_G(u, \gamma, x, y)$ for $u \in K$, $\gamma \in G$, and $x, y \in G^*_u$
	\item $H^*$ is a set with a projection $\pi_{H^*}$ onto $K$ such that there is a 1-transitive action of $\mathbb{Z}_2$ on each stalk $H^*_u =\pi_{H^*}^{-1}(u)$ denoted by $t_H$
	\item $Q$ is a $(n+1)$-ary relation on $(G^*)^n \times H^*$ satisfying the following:
	\begin{itemize}
		\item We can permute the first $n$ elements (the one from $G^*$) and preserve $Q$ holding.
		\item If $Q(x_1, \dots, x_n, y)$ holds, then the indices of their stalks are \emph{compatible}, which means the following: $x_\ell \in G^*_{u_\ell}$ and $y \in H^*_v$ such that $\{u_1, \dots, u_n, v\}$ are all $n$ element subsets of some $n+1$ element subset of $I$.
		\item $Q$ is preserved by ``even" actions in the following sense: suppose 
		\begin{itemize}
			\item $u_1, \dots, u_n, v \in K$ are compatible
			\item $x_\ell, x_\ell' \in G^*_{u_\ell}$ and $y, y' \in H^*_{v}$
			\item $\gamma_\ell \in G$ and $\ell \in \mathbb{Z}_2$ are the unique elements that send $x_\ell$ or $y$ to $x_\ell'$ or $y'$
		\end{itemize}
		then the following are equivalent
		\begin{itemize}
			\item $Q(x_1, \dots, x_n, y)$ if and only if $Q(x'_1, \dots, x'_n, y')$
			\item $\gamma_1(v) + \dots + \gamma_n(v) + \ell = 0 \mod 2$
		\end{itemize}
	\end{itemize}
\end{itemize}

For $M, N \in \K^n$, $M \lean N$ if and only if $M \prec_{\Ll_{\omega_1, \omega}} N$.


\end{defin}

\begin{fact}[\cite{bk-hs}]\label{bk-fact}
  Let $n \in [2, \omega)$.

    \begin{enumerate}
    \item $\K^n$ has disjoint amalgamation, joint embedding, and arbitrarily large models.
    \item \label{mod-comp} $\K^n$ is model-complete: For $M, N \in \K^n$, $M \lean N$ if and only if $M \subseteq N$.
    \item For any infinite cardinal $\lambda$, $\K^n$ is categorical in $\lambda$ if and only if $\lambda \le \aleph_{n - 2}$.
    \item $\K^n$ is not stable in any $\lambda \ge \aleph_{n - 2}$.
    \item If $n \ge 3$, then $\K^n$ is $(<\aleph_0, \le \aleph_{n - 3})$-tame, but it is \emph{not} $(\aleph_{n - 3}, \aleph_{n - 2})$-tame.
    \end{enumerate}
\end{fact}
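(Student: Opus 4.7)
Since this Fact is a compilation of results from \cite{hs-example} and \cite{bk-hs}, my plan is to cite those papers for each clause while sketching the flavour of the arguments for self-containment.

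For clauses (1) and (2), the class $\K^n$ is axiomatized in $\Ll_{\omega_1, \omega}$ by axioms preserved in both directions under substructure once the sort structure is in place, which immediately yields model-completeness. Disjoint amalgamation in (1) I would prove constructively: given $M_0 \subseteq M_1, M_2$, glue the index sets along $I^{M_0}$, freely extend the groups $G, H$ and the bundles $G^*, H^*$, and choose values of $Q$ on new compatible tuples so as to satisfy the action-preservation axiom (there is always such a choice because the axiom only constrains $Q$ up to a single $\mathbb{Z}_2$ value per new orbit). Joint embedding is a special case, and arbitrarily large models follow from the freedom in choosing $I$.

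For clause (3), categoricity in every $\lambda \le \aleph_{n-2}$ is the main content of \cite{hs-example}. The strategy I would follow is back-and-forth: two models of size $\lambda$ have isomorphic index sets, and their bundle data is classified by a cocycle-like invariant on $[I]^n$; the Hart-Shelah axioms, together with $\lambda \le \aleph_{n-2}$, force any two such invariants to be cohomologous. Non-categoricity above $\aleph_{n-2}$ I would witness by explicitly constructing non-isomorphic models whose cocycles differ essentially, the extra freedom appearing once $|I|$ is large enough that the relevant cohomology group becomes non-trivial.

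For clauses (4) and (5), the same cocycle freedom produces non-stability at $\aleph_{n-2}$ (one cardinal below the failure of categoricity, since stability is more sensitive to small non-isomorphic extensions): I would build many pairwise non-conjugate one-element extensions of a fixed $\aleph_{n-2}$-sized model. For positive tameness at $\aleph_{n-3}$ I would show that every finite restriction of a Galois type over a $\le \aleph_{n-3}$-sized model pins down the global $Q$-data via a compactness-style argument on compatible $n$-tuples. The failure of $(\aleph_{n-3}, \aleph_{n-2})$-tameness I would exhibit by a pair of Galois types over an $\aleph_{n-2}$-sized model agreeing on every $\aleph_{n-3}$-sized restriction but differing globally, exploiting exactly the cocycle-room that breaks categoricity at $\aleph_{n-2}$. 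The main obstacle throughout is clause (3): the algebraic-combinatorial analysis of when Hart-Shelah models of a given cardinality are isomorphic is the genuinely hard content of \cite{hs-example}, and my plan leans heavily on the cocycle formalism developed there.
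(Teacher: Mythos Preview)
The paper does not prove this Fact at all: it is stated with a bare citation to \cite{bk-hs} (and implicitly \cite{hs-example}), followed only by a one-sentence remark that the key to model-completeness is that the language contains functions computing the support of elements of $G$, so supports cannot grow in extensions. Your plan to cite and then sketch is therefore already more than the paper does, and is appropriate for a Fact of this kind.

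Your sketches are in the right spirit but differ in framing from the cited sources. Baldwin--Kolesnikov do not organize the argument around cocycles and cohomology; they work with the notion of a \emph{solution} (a coherent choice of basepoints in the fibers of $G^*$ and $H^*$ making $Q$ hold on compatible tuples) and prove $k$-amalgamation properties for solutions over sets of various sizes. Categoricity, tameness, and their failures are then read off from which amalgamation properties hold at which cardinals (this is exactly Fact~\ref{amal-sol-fact} in the present paper). Your cocycle picture is morally equivalent---a solution is a trivialization of the relevant torsor, and its nonexistence is a nonvanishing cohomology class---but if you want your sketch to match what a reader will find in \cite{bk-hs}, you should phrase it in the solutions language. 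Also, for clause~(2) your ``axioms preserved under substructure'' is too vague to be convincing on its own; the actual point, as the paper notes, is that the support-indicator functions are in the language, which is what forces substructures to be full.
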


A crucial point for (\ref{mod-comp}) is that the language computes the support of the functions in $G(M)$, so that the supports cannot grow as the model does; such substructures are called \emph{full} in \cite{bk-hs}.  Note that the entire universe of a model of $\K^n$ is determined by the index $I$, so if $M \subsetneq N$, then $I(M) \subsetneq I(N)$. Thus it is natural to define a frame whose basic types are just the types of elements in $I$ and nonforking is just nonalgebraicity. The following definition appears in the proof of \cite[10.2]{ext-frame-jml}:

\begin{defin}\label{simple-frame-def}
  Let $n \in [3, \omega)$. For $k \le n - 3$, let $\s^{k, n} = (\K_{\aleph_k}^n, \nf, \Sbs)$ be defined as follows:

    \begin{itemize}
    \item $p \in \gSbs (M)$ if and only if $p = \gtp (a / M; N)$ for $a \in I (N) \backslash I (M)$.
    \item $\gtp(a/M_1; M_2)$ does not fork over $M_0$ if and only if $a \in I(M_2) \backslash I(M_1)$.
    \end{itemize}
\end{defin}
\begin{remark}
  By \cite[10.2]{ext-frame-jml}, $\s^{k, n}$ is a good $\aleph_k$-frame.  We will extend this to a type-full good frame in Theorem \ref{abstract-positive}.
\end{remark}

The notion of a solution is key to analyzing models of $\K^n$. 

\begin{defin}[{\cite[2.1 and 2.3]{bk-hs}}]
Let $M \in \K^n$.
\begin{enumerate}
	\item $h = (f,g)$ is a solution for $W \subseteq K(M)$ if and only if $f \in \Pi_{u \in W} G^*_u(M)$ and $g \in \Pi_{u \in W} H^*_u(M)$ such that, for all compatible $u_1, \dots, u_n, v \in W$, we have
	$$M \vDash Q\left( f(u_1), \dots. f(u_n), g(v)\right)$$
	\item $h= (f, g)$ is a solution over $A \subseteq I(M)$ if and only if it is a solution for $[A]^n$.
	\item $h=(f, g)$ is a solution for $M$ if and only if it is a solution for $K(M)$.
\end{enumerate}
Given $f:M \cong N$ and solutions $h^M$ for $M$ and $h^N$, we say that $h^M$ and $h^N$ are conjugate by $f$ if
$$f^N = f \circ f^M \circ f^{-1} \text{ and } g^N = f \circ g^M \circ f^{-1}$$
We write this as $h^N = f \circ h^M \circ f^{-1}$.
\end{defin}

A key notion is that of extending and amalgamating solutions.

\begin{defin}[{\cite[2.9]{bk-hs}}] \
\begin{enumerate}
	\item A solution $h = (f, g)$ extends another solution $h' = (f', g')$ if $f' \subseteq f$ and $g' \subseteq g$.
	\item We say that $\K^n$ has \emph{$k$-amalgamation for solutions over sets of size $\lambda$} if given any $M \in \K^n$, $A \subseteq I(M)$ of size $\lambda$, $\{b_1, \dots, b_n\} \subseteq I(M)$, and solutions $h_w$ over $A \cup\{ b_i \mid i \in w\}$ for every $w \in [\{b_1, \dots, b_b\}]^{n-1}$ such that $\bigcup_{w} h_w$ is a function, there is a solution $h$ for $A \cup \{b_i \mid i \leq n\}$ that extends all $h_w$.
\end{enumerate}
\end{defin}

0-amalgamation is often referred to simply as the existence of solutions and 1-amalgamation is the extension of solutions.

Forgetting the $Q$ predicate, $M \in \K^n$ is a bunch of affine copies of $G^M$, so an isomorphism is determined by a bijection between the copies and picking a 0 from each affine copy.  However, adding $Q$ complicates this picture.  Solutions are the generalization of picking 0's to $\K^n$.  Thus, amongst the models of $\K^n$ admitting solutions (which is at least $\K^n_{\aleph_{n-2}}$, see Fact \ref{amal-sol-fact}), there is a strong, functorial correspondence between isomorphisms between $M$ and $N$ and pairs of solutions for $M$ and $N$.

The following is implicit in \cite{bk-hs}, see especially Lemma 2.6 there.

\begin{fact}\label{sol-iso-corr}
We work in $\K^n$.
\begin{enumerate}
	\item Given $f:M \cong N$ and a solution $h^M$ of $M$, there is a unique solution $h^N$ of $N$ that is conjugate to $h^M$ by $f$.  Moreover, if $f': M' \cong N'$ extends\footnote{So $M \lean M'$ and $N \lean N'$.} $f$ and $h^{M'}$ is a solution of $M'$ extending $h^M$, then the resulting $h^{N'}$ extends $h^N$.
	\item Given solutions $h^M$ for $M$ and $h^N$ for $N$ and a bijection $h_0:I(M) \to I(N)$, there is a unique isomorphism $f:M \cong N$ extending $h_0$ such that $h^M$ and $h^N$ are conjugate by $f$.  Moreover, if $h^{M'}$ and $h^{N'}$ are solutions for $M'$ and $N'$ that extend $h^M$ and $h^N$, then the resulting $f'$ extends $f$.
	\item These processes are inverses of each other: if we have [$f: M \cong N$ and a solution $h^M$ of $M$]/[solutions $h^M$ and $h^N$ for $M$ and $N$ and a bijection $h_0:I(M) \to I(N)$] and then apply [(1) and then (2)]/[(2) and then (1)], then [the resulting isomorphism is $f$]/[the resulting solutions for $N$ is $h^N$],
\end{enumerate}
\end{fact}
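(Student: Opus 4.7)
The plan is to exploit the observation that, forgetting the predicate $Q$, a model of $\K^n$ is just an index set $I$ together with, for each $u \in K = [I]^n$, a $G$-torsor $G^*_u$ and a $\mathbb{Z}_2$-torsor $H^*_u$. A solution picks out a base point in each of these torsors. So given a bijection on $I$, the rest of an isomorphism is forced by demanding that chosen base points go to chosen base points; the only nontrivial check is that $Q$ is preserved, and this is where the ``even action'' axiom will be used.

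For part~(1), define $h^N := f \circ h^M \circ f^{-1}$. This is conjugate to $h^M$ by $f$ by construction, and it is uniquely determined since the conjugation equation fixes its values pointwise. To check $h^N$ is a solution, pull a compatible tuple in $K(N)$ back through $f$ to get a compatible tuple in $K(M)$, use that $h^M$ satisfies $Q$ there, and then apply that $f$ preserves $Q$. The monotonicity clause under $f' \supseteq f$ and $h^{M'} \supseteq h^M$ is immediate from the pointwise formula.

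For part~(2), build $f$ sort by sort. On $I$ put $f := h_0$; this induces the map on $K$. Since the language computes support, $G$ is canonically $\bigoplus_{u \in K} \mathbb{Z}_2$, and $h_0$ induces a group isomorphism $G(M) \to G(N)$; the sort $H = \mathbb{Z}_2$ is canonically identified. For each $u \in K(M)$ and $x \in G^*_u(M)$ there is a unique $\gamma \in G(M)$ with $t_G(u, \gamma, h^M(u), x)$; send $x$ to the unique $y \in G^*_{h_0(u)}(N)$ with $t_G(h_0(u), f(\gamma), h^N(h_0(u)), y)$. Do the analogous construction on $H^*$. This map is forced as soon as one requires that $f$ extend $h_0$ and conjugate the solutions, so uniqueness is clear. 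The main obstacle is verifying that $f$ preserves $Q$: apply the ``even action'' axiom to reduce an arbitrary instance of $Q$ on a compatible tuple in $M$ to the instance of $Q$ at the solution base points, modulated by the parity of $\gamma_1(v) + \ldots + \gamma_n(v) + \ell$; both $h^M$ and $h^N$ satisfy $Q$ at their base points by hypothesis, and $f$ preserves this parity (because it acts on $G$ by permuting supports via $h_0$, and fixes $H$), so the two instances of $Q$ agree. Monotonicity under extensions is again immediate from the pointwise construction.

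For part~(3), inverse-ness is a formal check from the definitions. Starting from $(f, h^M)$ in part~(1), the resulting $h^N$ is by definition conjugate to $h^M$ via $f$; feeding $(h^M, h^N, f \rest I(M))$ into part~(2) returns the unique isomorphism with these properties, which must be $f$. Conversely, given $(h^M, h^N, h_0)$, the isomorphism $f$ produced by part~(2) conjugates $h^M$ to $h^N$ by construction of its action on base points, so applying part~(1) to $(f, h^M)$ returns $h^N$.
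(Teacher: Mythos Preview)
The paper does not actually give its own proof of this statement: it is stated as a \emph{Fact} and attributed to Baldwin--Kolesnikov (the sentence immediately preceding the statement reads ``The following is implicit in \cite{bk-hs}, see especially Lemma 2.6 there''). Your argument is correct and supplies exactly the details the paper leaves implicit. Moreover, your approach---building the isomorphism sort by sort from the two solutions and then verifying preservation of $Q$ via the even-action axiom---is the same construction the paper itself carries out explicitly later, in the proof of Theorem~\ref{hs-successful}, so your write-up is fully in line with the paper's methods.
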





\begin{lemma} \label{not-aff-ext}
Suppose $M, N \in \K^n$ and $f_0:I(M) \to I(N)$ is an injection.  Then there is a unique extension to $f_1$ with domain $M - \left(G^*(M) \cup H^*(M)\right)$ that must be extended by any strong embedding extending $f_0$.
\end{lemma}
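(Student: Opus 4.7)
The plan is to work sort by sort. The domain $M - (G^*(M) \cup H^*(M))$ is exactly $I(M) \cup K(M) \cup H(M) \cup G(M)$, and for each of these four sorts the value of $f_1$ will be forced by the requirement that any strong embedding preserve the distinguished relations and operations in the signature of $\K^n$ (recall by Fact~\ref{bk-fact}(\ref{mod-comp}) that a strong embedding is simply a substructure embedding that preserves all the specified symbols, including the support functions on $G$). Starting from $f_1 \rest I(M) := f_0$, I would then extend to the remaining three sorts and argue uniqueness at each stage.

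On $K$, since $K(M) = [I(M)]^n$ with the membership relation $\in \subseteq I \times K$ in the signature, any strong embedding extending $f_0$ must send $u = \{a_1, \dots, a_n\} \in K(M)$ to $\{f_0(a_1), \dots, f_0(a_n)\} \in K(N)$; this is a well-defined $n$-element subset because $f_0$ is injective, and so $f_1 \rest K(M)$ is forced. On $H$, the sort is a copy of the rigid finite group $\mathbb{Z}_2$ with its addition operation, so the only possible map $H(M) \to H(N)$ is the canonical identification $0 \mapsto 0$, $1 \mapsto 1$. This fixes $f_1 \rest H(M)$.

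The sort $G$ is the only case that uses more than the obvious rigidity. Here the signature contains both the evaluation map $G \times K \to H$ and explicit functions naming the support of each element of $G$, so an element $\gamma \in G(M)$ is uniquely determined by the finite set $\supp(\gamma) \subseteq K(M)$ together with the values $\gamma(u) \in H(M)$ for $u \in \supp(\gamma)$. Any strong embedding extending $f_0$ must therefore send $\gamma$ to the unique $\gamma' \in G(N)$ with $\supp(\gamma') = \{f_1(u) : u \in \supp(\gamma)\}$ and $\gamma'(f_1(u)) = \gamma(u)$; taking this as the definition of $f_1 \rest G(M)$ produces a well-defined element of $G(N)$ (the proposed support is finite, and the prescription is compatible with the group operation since addition on $G$ is computed pointwise).

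Piecing the four restrictions together gives a single map $f_1$ on $M - (G^*(M) \cup H^*(M))$, and the argument above simultaneously establishes both that $f_1$ is well-defined and that any strong embedding extending $f_0$ must extend $f_1$, which gives uniqueness. The main conceptual point — and essentially the only non-bookkeeping step — is the case of $G$, where one has to invoke the fact that the support functions are part of the signature; this is also what makes the sorts $G^*$ and $H^*$ genuinely different, since no analogous support data picks out canonical base points in those torsors, which is why they are (correctly) excluded from the domain of $f_1$.
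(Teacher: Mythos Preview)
Your proof is correct and is essentially a detailed unpacking of the paper's one-line argument, which simply observes that $M - (G^*(M) \cup H^*(M))$ is the definable closure of $I(M)$ and hence any strong embedding is determined there by its action on $I(M)$. Your sort-by-sort analysis makes explicit exactly how each element of $K$, $H$, and $G$ is definable from $I$ via the language (membership, group structure, evaluation, and support functions), which is precisely what the definable-closure claim amounts to.
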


\begin{proof}
$M - \left(G^*(M) \cup H^*(M)\right)$ is the definable closure of $I(M)$, so the value of $f_0$ on $I(M)$ determines the value on $M - \left(G^*(M) \cup H^*(M)\right)$.
\end{proof}

For the following, write $\aleph_{-1}$ for finite.

\begin{fact}\label{amal-sol-fact}
  Let $n \in [2, \omega)$, $k_0 < \omega$, and $k_1 \in \{-1\} \cup \omega$. The following are equivalent:
\begin{enumerate}
	\item $\K^n$ has $k_0$-amalgamation of solutions over $\aleph_{k_1}$-sized sets.
	\item $k_0 + k_1 \leq n-2$.
\end{enumerate}
\end{fact}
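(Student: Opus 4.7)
The plan is to establish both implications by joint induction on $k_0 + k_1$, following the combinatorial arguments in \cite{bk-hs}.

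For $(2) \Rightarrow (1)$, I would induct on $k_1 \ge -1$ with $k_0 + k_1 \le n - 2$. The base case $k_1 = -1$ (finite $A$) reduces to solving a finite system of $\mathbb{Z}_2$-linear constraints: each compatible $(n+1)$-tuple imposes a parity equation relating the stalks $G^*_u$ and $H^*_v$ via the ``even action'' axiom for $Q$, and when $k_0 \le n - 1$, there are few enough new $n$-subsets of $\{b_1, \dots, b_{k_0}\} \cup A$ that these equations can be satisfied by choosing values on the undetermined stalks one at a time. For the inductive step from $\aleph_{k_1-1}$ to $\aleph_{k_1}$, I would enumerate $A$ as $\langle a_\alpha : \alpha < \aleph_{k_1} \rangle$ and construct the amalgamating solution by transfinite recursion: limits are handled by taking unions (compatibility is preserved because $Q$ involves only finitely many stalks at once), and the successor step $\alpha \mapsto \alpha+1$ reduces to a $(k_0 + 1)$-amalgamation problem over $A_\alpha := \{a_\beta : \beta < \alpha\}$, a set of size less than $\aleph_{k_1}$, with $a_\alpha$ treated as an additional new parameter. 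The inductive hypothesis applies since $(k_0 + 1) + (k_1 - 1) = k_0 + k_1 \le n - 2$.

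For the contrapositive of $(1) \Rightarrow (2)$, I would construct explicit obstructions whenever $k_0 + k_1 \ge n - 1$. The central case is $(k_0, k_1) = (0, n - 1)$: exhibit a model $M \in \K^n$ with $|I(M)| = \aleph_{n-1}$ admitting no global solution. This is the non-solution phenomenon driving the categoricity collapse at $\aleph_{n-1}$ and is essentially the heart of the Hart-Shelah construction: one uses the combinatorial structure of $\aleph_{n-1}$ to encode a nontrivial $\mathbb{Z}_2$-valued cocycle on $n$-element subsets of $I(M)$ that cannot be a coboundary. All other failures at the threshold $k_0 + k_1 = n - 1$ would then follow by contraposing the transfinite recursion from the positive direction: if some $k_0$-amalgamation held at $\aleph_{k_1}$-sized sets with $k_0 + k_1 = n - 1$, one iteration of that recursion would produce a global solution over an $\aleph_{n - 1}$-sized set, contradicting the base failure.

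The main obstacle will be the negative direction, specifically the explicit construction of the nontrivial cocycle on $[\aleph_{n-1}]^n$ witnessing non-amalgamation. This requires exploiting combinatorial features of $\aleph_{n-1}$ (for instance, the existence of sufficiently generic enumerations or the failure of suitable compactness-type principles) that are absent at strictly smaller cardinals, whereas the positive direction is relatively routine once the inductive framework is set up.
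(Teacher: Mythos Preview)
Your proposal is essentially correct and matches the paper's cited source. The paper treats this as a Fact, simply pointing to \cite{bk-hs}: your positive direction (2) $\Rightarrow$ (1) is exactly their argument (\cite[2.11]{bk-hs} for the finite base case and \cite[2.14]{bk-hs} for the step-down recursion $(k_0+1,\aleph_{k_1-1}) \rightsquigarrow (k_0,\aleph_{k_1})$).

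For (1) $\Rightarrow$ (2) you take a slightly different route than \cite[Section 6]{bk-hs}. They build explicit obstructions at each threshold level, whereas you propose to construct only the single failure of $0$-amalgamation at $\aleph_{n-1}$ and derive the rest by contraposing the recursion. This works cleanly at the threshold $k_0 + k_1 = n - 1$: the iteration from $(k_0,k_1)$ down to $(0,n-1)$ needs, at each step, $j$-amalgamation over all sizes strictly below the current one, and those side hypotheses are supplied by the already-proven positive direction (since there $j + m \le n - 2$). That is a nice economy.

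There is, however, a small gap: you do not address $k_0 + k_1 > n - 1$. There the same iteration stalls, because some of the required side hypotheses sit above threshold and cannot be pulled from the positive direction; nor is downward monotonicity of $k_0$-amalgamation in the base-set size automatic (compatibly extending a system $\{h_w\}$ to a larger base is itself an amalgamation problem). The direct constructions in \cite[Section 6]{bk-hs} sidestep this. For the present paper the point is moot, since the only instance of the negative direction actually used (in Theorem \ref{negative-prop}) is the threshold case $2 + (n-3) = n - 1$.
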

\begin{proof}
  (1) implies (2) by the examples of \cite[Section 6]{bk-hs}.  (2) implies (1) by combining \cite[2.11, 2.14]{bk-hs}.
\end{proof}

We could do many more variations on the following, but I think this statement suffices for what we need to show.

\begin{defin}\label{standard-def}
  For $n \in [2, \omega)$ and $I$ an index set, the \emph{standard model for $I$} is the unique $M \in \K^n$ such that $G^* (M) = K \times G_K$, where $K := [I]^n$.
\end{defin}

\begin{lemma}\label{ext-imp-stand}
  Let $n \in [3, \omega)$. Given any $M \lean N$ from $\K^n_{\leq \aleph_{n-3}}$, we may assume that they are standard.  That is, if we write $M^*$ for the standard model of $I(M)$ and $N^*$ for the standard model on $I(N)$, then there is an isomorphism $f:N \cong_{I(N)} N^*$ that restricts to an isomorphism $M \cong_{I(M)} M^*$.
\end{lemma}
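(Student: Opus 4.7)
The plan is to invoke the solution-isomorphism correspondence of Fact \ref{sol-iso-corr}(2) twice, with compatible pairs of solutions. First I would build solutions $h^M \subseteq h^N$ for $M$ and $N$, and analogous solutions $h^{M^*} \subseteq h^{N^*}$ for the two standard models. Then I would feed each pair into Fact \ref{sol-iso-corr}(2) with the appropriate identity bijection on the index set, and use the ``moreover'' clause to conclude that the resulting isomorphism of $N$ with $N^*$ restricts to the corresponding isomorphism of $M$ with $M^*$.

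For the construction of the solutions, since $|I(M)| \le \aleph_{n-3}$ and $0 + (n-3) \le n-2$, Fact \ref{amal-sol-fact} supplies $0$-amalgamation (i.e., existence) of solutions over $\aleph_{n-3}$-sized sets, so I can pick a solution $h^M$ of $M$. Since $1 + (n-3) \le n-2$, the same fact also supplies $1$-amalgamation (extension) of solutions over $\aleph_{n-3}$-sized sets; enumerating $I(N) \setminus I(M)$ and adding one index at a time (taking unions at limit stages, which preserves the $Q$-conditions since each involves only finitely many indices) would then extend $h^M$ to a solution $h^N$ of $N$. An identical construction yields $h^{M^*} \subseteq h^{N^*}$ on the two standard models.

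Next I would apply Fact \ref{sol-iso-corr}(2) to the identity bijection $\mathrm{id} : I(N) \to I(N^*) = I(N)$ together with $h^N$ and $h^{N^*}$ to obtain a unique isomorphism $f : N \cong N^*$ that fixes $I(N)$ pointwise and conjugates $h^N$ to $h^{N^*}$. Applying the same fact to $\mathrm{id} : I(M) \to I(M^*)$ with $h^M$ and $h^{M^*}$ would give an isomorphism $f_M : M \cong M^*$ fixing $I(M)$. The ``moreover'' clause of Fact \ref{sol-iso-corr}(2), applied to the extensions $h^M \subseteq h^N$ and $h^{M^*} \subseteq h^{N^*}$, then forces $f$ to extend $f_M$, which is precisely the desired conclusion.

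The main obstacle is the inductive extension from $h^M$ to $h^N$: it relies on $1$-amalgamation of solutions over $\aleph_{n-3}$-sized sets, and the governing inequality $1 + (n-3) \le n-2$ is tight, so the hypothesis $|M|,|N| \le \aleph_{n-3}$ is essential and cannot be loosened by this argument. Everything else is routine bookkeeping, once one trusts the functorial content of Fact \ref{sol-iso-corr}.
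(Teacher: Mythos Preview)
Your proposal is correct and follows essentially the same approach as the paper: build compatible solutions $h^M \subseteq h^N$ via existence and extension of solutions (Fact \ref{amal-sol-fact} with $1 + (n-3) \le n-2$), take compatible solutions on the standard models, and then invoke Fact \ref{sol-iso-corr}(2) together with its ``moreover'' clause using the identity bijection on $I$. The only cosmetic difference is that the paper obtains $h^{M^*} \subseteq h^{N^*}$ directly from the standard models (where the obvious zero sections give canonical compatible solutions), whereas you rerun the existence/extension argument; either works.
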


\begin{proof}
  Find a solution $h^M$ for $M$ and extend it to a solution $h^N$ for $N$; this is possible by Fact \ref{amal-sol-fact} since $(n-3) + 1 \leq n-2$.  We have solutions $h^{M^*}$ and $h^{N^*}$ for $M^*$ and $N^*$ because they are the standard models and, thus, have solutions.  Then Theorem \ref{sol-iso-corr} allows me to build an isomorphism between $M$ and $M^*$ and extend it to $f:N \cong N^*$, each of which extend the identity on $I$.
\end{proof}

\section{Tameness and shortness}

The following is a strengthening of \cite[5.1]{bk-hs} to include type-shortness. 

\begin{theorem}\label{shortness-thm}
  For $n \in [3, \omega)$, $\K^n$ is $(<\aleph_0, < \aleph_{n-3})$-type short over $\leq \aleph_{n-3}$-sized models and $(<\aleph_0, \le \aleph_{n - 3})$-tame for $(<\aleph_{n-3})$-length types.  Moreover, these Galois types are equivalent to first-order existential (syntactic) types.
\end{theorem}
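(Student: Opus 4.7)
The plan is to first establish the ``moreover'' clause, and then derive both tameness and type-shortness as formal consequences. Specifically, I aim to show: for $\bar a \in N_1$ and $\bar b \in N_2$ of length $\alpha < \aleph_{n-3}$ with $M \lean N_\ell$ and $\|M\|, \|N_\ell\| \le \aleph_{n-3}$ for $\ell = 1, 2$, we have $\gtp(\bar a / M; N_1) = \gtp(\bar b / M; N_2)$ if and only if $\bar a$ and $\bar b$ realize the same existential first-order formulas over $M$. Once this equivalence is in hand, tameness and shortness are immediate: any existential formula mentions only finitely many free variables and finitely many parameters from $M$, so two tuples have the same existential type over $M$ iff they agree on every finite-subsequence restriction (shortness) iff they agree over every finite subset of $M$ (tameness).

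The direction ``Galois type equal $\Rightarrow$ existential types equal'' is immediate from model-completeness (Fact \ref{bk-fact}(\ref{mod-comp})): an amalgam witnessing the Galois-type equality preserves all existential formulas. For the converse, I would adapt the proof of \cite[5.1]{bk-hs} from singletons to sequences. First, apply Lemma \ref{ext-imp-stand} to assume $M$, $N_1$, $N_2$ are all standard models on their respective index sets. By Fact \ref{sol-iso-corr}, producing an isomorphism $N_1 \cong N_1'$ over $M$ sending $\bar a$ to $\bar b$ (inside a common extension of $N_2$) is equivalent to specifying (i) a bijection of index sets, fixing $I(M)$ and compatible with the required matching on the $I$-parts of $\bar a$ and $\bar b$, together with (ii) a choice of solutions on $N_1$ and $N_2$ that, jointly with (i), realize the matching $\bar a \mapsto \bar b$ on the remaining sorts.

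The data in (i) is forced by the existential type: facts such as equality with elements of $I(M)$, the sort of each entry of $\bar a$, and the projection to $K$ of entries lying in $G^*$ or $H^*$ are all quantifier-free and hence preserved. The real obstacle is (ii): the $Q$-predicate imposes parity constraints on solutions, and the partial solution data determined by the entries of $\bar a, \bar b$ lying in $G^*(N_\ell) \cup H^*(N_\ell)$ must be extended to full solutions of $N_1$ and $N_2$ compatibly. This is precisely where amalgamation of solutions enters: since $\|N_\ell\| \le \aleph_{n-3}$, the relevant index set has size at most $\aleph_{n-3}$, and the inequality $1 + (n-3) \le n - 2$ places us squarely in the regime where Fact \ref{amal-sol-fact} provides $1$-amalgamation of solutions over $\aleph_{n-3}$-sized sets, which is the ingredient needed to glue the partial solutions dictated by the existential type of $\bar a \bar b$ into full solutions.

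The main difficulty I expect is the bookkeeping around the $Q$-predicate: one must verify that the ``even action'' compatibility condition in the definition of $\K^n$ is precisely captured by the existential type of $\bar a \bar b$ over $M$, so that the gluing allowed by Fact \ref{amal-sol-fact} can be invoked without producing a contradiction. Everything else, including the reduction to standard models and the translation between isomorphisms and solutions, is handled by the framework already set up in the Hart--Shelah preliminaries.
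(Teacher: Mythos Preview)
Your overall strategy---reducing to the equivalence of Galois and existential types and invoking the solution machinery via Fact \ref{sol-iso-corr}---matches the paper. But your identification of the amalgamation input is wrong, and this is the heart of the proof, not bookkeeping. You invoke $1$-amalgamation of solutions over $\aleph_{n-3}$-sized sets, but $1$-amalgamation is merely \emph{extension} of a single solution. The task here is genuinely two-dimensional: the solution on $N_1$ you need must simultaneously (a) restrict to a prescribed solution on $M$, so that the resulting isomorphism fixes $M$, and (b) be compatible with the partial data carried by the $G^*$- and $H^*$-entries of $\bar a$, so that it sends $\bar a$ to $\bar b$. Satisfying (a) and (b) together is a $2$-amalgamation problem, and by Fact \ref{amal-sol-fact}, $2$-amalgamation over $\aleph_{n-3}$-sized sets \emph{fails} (since $2 + (n-3) > n-2$). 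So the plan as stated cannot go through when $\|M\| = \aleph_{n-3}$.

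The paper circumvents this by never treating $M$ as a single block. One first passes to the minimal full substructures $M^A, M^B$ generated by $\bar a, \bar b$---these have size at most $\aleph_{n-4}$ since the tuples have length $< \aleph_{n-3}$---and builds the isomorphism there using extension of solutions. One then resolves $M$ as an increasing continuous chain $\langle M_i : i < \alpha\rangle$ with each $\|M_i\| \le \aleph_{n-4}$ and propagates the isomorphism along the chain, at each successor step amalgamating the solution already built on $M^A_i$ with a chosen solution on $M_{i+1}$ over $M_i$. This is $2$-amalgamation over sets of size $\le \aleph_{n-4}$, which holds precisely because $2 + (n-4) \le n-2$; the paper explicitly flags this as the step governing the cardinal threshold in the theorem. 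The filtration argument, not the $Q$-bookkeeping, is the idea your plan is missing.
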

\begin{proof}

For this proof, write $\tp_\exists$ for the first-order existential type.  We prove the type-shortness claim.  The tameness result follows from \cite[5.1]{bk-hs}.

  Let $M \in \K^n_{\leq \aleph_{n-3}}$ and $M \lean N^A, N^B$ with $A \subseteq |N^A|$, $B \subseteq |N^B|$ of size $\leq \aleph_{n-4}$ (we use our convention from Fact \ref{amal-sol-fact} that $\aleph_{-1}$ means finite) such that $\tp_\exists(A/M; N^A) = \tp_\exists(B/M; N^B)$.  By \cite[4.2]{bk-hs}, we can find minimal, full substructures $M^A$ and $M^B$.  Additionally, for each finite $\ba \in A$ and $\bb \in B$, we can find minimal full substructures $M^\ba$ and $M^\bb$ in $M^A$ and $M^B$.  It's easy to see that $M^A$ is the directed union of $\{M^\ba \mid \ba \in A\}$ and similarly for $M^B$; note that we don't necessarily have $M^\ba, M^{\ba'} \subseteq |M^{\ba \cup \ba'}|$, but they are in $M^{M^{\ba} \cup M^{\ba'}}$.
  
Set $M_0 = M^A \cap M$.  We want to build $f_0 : M^A \to_{M_0} N^B$ such that $f_0(A) = B$.  Similarly, construct $M^B$.  Note that 
$$M_0=M^A \cap M = \cup_{\ba \in M} (M^\ba \cap M_0) = \cup_{\bb\in M} (M^\bb \cap M_0) = M^B \cap M_0$$
By assumption, we have $\tp_\exists(A/M_0; M^A) = \tp_\exists(B/M_0; M^A)$.  Set $X = \{ \pi^{M^A}(x) \mid x \in A \cap G^*(M^A)\}$ and $Y = \{ \pi^{M^B}(x) \mid x \in B \cap G^*(M^B)\}$, indexed appropriately.  \\

{\bf Claim:} $\tp_\exists(AX/M_0; M^A) = \tp_\exists(BY/M_0; M^A)$

This is true because all of the added points are in the definable closure via an existential formula.\\

Thus, the induced partial map $f:AX \to BY$ is $\exists$-elementary.  By Fact \ref{amal-sol-fact}, we have extensions of solutions.  Let $h^{M^A}$ be a solution for $M^A$.  Then we can restrict this to $h^X$ which is a solution for $X$.  Then we can define a solution $h^Y$ for $Y$ by conjugating it with $f$.  Finally, we can extend $h^Y$ to a solution $h^{M^B}$ for $M^B$.  Since they satisfy the same existential type and the extensions are minimally constructed, we can define a bijection $h_0:I(M^A) \to I(M^B)$ respecting the type.  Given the two solutions and the bijection $h_0$, we can use Theorem \ref{sol-iso-corr} to find an isomorphism $f_0: M^A \cong M^B$ extending $h_0$ and making these solutions conjugate.  By construction, $f_0$ fixes $M_0$ and sends $A$ to $B$.

Resolve $M$ as $\seq{M_i \mid i < \alpha}$ starting with $M_0$ so $\|M_i\| \leq \aleph_{n-4}$.  Then find increasing continuous $\seq{M^A_i, M^B_i \mid i < \alpha}$ by setting $M^A_0 = M^A$ and $M^A_{i+1}$ to be a disjoint amalgam\footnote{Crucially, it is an amalgam such that $I(M^A_{i+1}) = I(M^A_i) \cup I(M_{i+1})$ with the union disjoint over $I(M_i)$; this is guaranteed by the second clause of the claim.} of $M_{i+1}$ and $M^A_i$ over $M_i$, and similarly for $M^B_i$.  

Using extension of solutions, we can find an increasing chain of solutions $\seq{ h^{M_i} \mid i < \alpha}$ for $M_i$.  Using 2-amalgamation of solutions over $\leq \alpha_{n-4}$ sized sets\footnote{Crucially, this holds here, but fails at the next cardinal.  Thus, we couldn't use this argument to get $(< \aleph_0, \aleph_{n-3})$-type shortness or over $\aleph_{n-2}$ sized models.}, we can find increasing chains of solutions $\seq{h^{M^A_i}, h^{M^B_i} \mid i < \alpha}$ for $M^A_i$ and $M^B_i$, respectively, such that $h^{M^A_i}$ also extends $h^{M_i}$.

By another application of Theorem \ref{sol-iso-corr}.(2), this gives us an increasing sequence of isomorphism $\seq{f_i: M^A_i \cong_{M_i} M^B_i \mid i < \alpha}$; here we are using that $I(M^A_{i+1}) - I(M^A_i) = I(M^B_{i+1})-I(M^B_i)$.  At the top, we have that $f_{\alpha}:M^A \cong_M M^B$.  This demonstrates that $\gtp(A/M; N^A) = \gtp(B/M; N^B)$.
\end{proof}

Baldwin and Kolesnikov \cite{bk-hs} have shown that tameness fails at the next cardinal and we will see later (Corollary \ref{type-short-negative}) that $\K^n$ is \emph{not} $(<\aleph_{n - 3}, \aleph_{n - 3})$-type short over $\aleph_{n - 3}$-sized models. 

\section{What the abstract theory tells us}

We combine the abstract theory with the facts derived so far about the Hart-Shelah example.

We first give an abstract argument that in the Hart-Shelah example good frames below $\aleph_{n - 3}$ are weakly successful (in fact successful):

\begin{theorem}\label{abstract-positive}
  Let $n \in [3, \omega)$. For any $k \in [1, n - 3]$, there is a type-full good $\aleph_k$-frame $\s$ on $\K^n$. Moreover, $\s$ (and therefore $\s^{k, n}$) is successful if $k < n - 3$.
\end{theorem}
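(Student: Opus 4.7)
The plan is to apply Fact \ref{good-frame-constr} with $\lambda = \aleph_k$ and $\kappa = \aleph_0$ to obtain the type-full good frame $\s$, and then (for $k < n-3$) upgrade it to successful via Fact \ref{succ-criteria}. Note $\LS(\K^n) = \aleph_0$, so the cardinal arithmetic conditions $\LS(\K)^{<\kappa} = \LS(\K)$ and $\lambda^{<\kappa} = \lambda$ are automatic with $\kappa = \aleph_0$; the restriction $k \ge 1$ in the hypothesis is exactly what ensures $\LS(\K^n) < \aleph_k$, as required by Fact \ref{good-frame-constr}.

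For the existence of $\s$, the only nontrivial hypothesis of Fact \ref{good-frame-constr} is $(\aleph_0, \le \aleph_k)$-tameness, which follows from the $(<\aleph_0, \le \aleph_{n-3})$-tameness in Fact \ref{bk-fact} since $k \le n-3$. Amalgamation, arbitrarily large models, and categoricity in $\aleph_k$ and $\aleph_{k+1}$ all come from Fact \ref{bk-fact} (using $k+1 \le n-2$). This delivers the type-full good $\aleph_k$-frame $\s$.

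For the moreover clause, assume $k < n-3$. First I would upgrade $\s$ to weakly successful using the second half of Fact \ref{good-frame-constr}: the additional inputs are $(\aleph_0, \le \aleph_k)$-tameness for finite-length types and $(<\aleph_0, \le \aleph_k)$-type-shortness over $\aleph_k$-sized models, both supplied by Theorem \ref{shortness-thm}. The strict inequality $k < n-3$ is essential here, since Theorem \ref{shortness-thm} only gives type-shortness for lengths $<\aleph_{n-3}$, so we need $\aleph_k < \aleph_{n-3}$. Next I would apply Fact \ref{succ-criteria} with $\lambda = \aleph_k$: categoricity in $\aleph_k$ and amalgamation in $\aleph_{k+1}$ are given, while $(\aleph_k, \aleph_{k+1})$-tameness follows from $(<\aleph_0, \le \aleph_{n-3})$-tameness using $k+1 \le n-3$. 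Hence $\s$ is successful $\goodp$. That $\s^{k,n}$ inherits successfulness should follow by canonicity (Fact \ref{canon-fact}) together with the observation that the definitions of uniqueness triples, $\NF$, and $\leanf$ depend only on nonforking of basic types, which agrees between $\s$ and $\s^{k,n}$ on their common basic types. The main obstacle is really just careful bookkeeping of the two boundary conditions ($k \ge 1$ for existence, $k < n-3$ for successfulness) and matching the tameness/type-shortness levels from Fact \ref{bk-fact} and Theorem \ref{shortness-thm} to the abstract hypotheses.
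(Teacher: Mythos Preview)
Your proposal is correct and follows essentially the same route as the paper: apply Fact \ref{good-frame-constr} with $\lambda = \aleph_k$ and $\kappa = \aleph_0$ (using Fact \ref{bk-fact} for categoricity, amalgamation, and tameness) to get the type-full frame, then for $k < n-3$ invoke the second clause of Fact \ref{good-frame-constr} via Theorem \ref{shortness-thm} to get weak successfulness, upgrade to successful by Fact \ref{succ-criteria}, and transfer to $\s^{k,n}$ by canonicity. Your bookkeeping of the boundary cases and the tameness/shortness levels is accurate and slightly more explicit than the paper's.
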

\begin{proof}
  Let $\lambda := \aleph_k$. First, assume that $k < n - 3$. By Fact \ref{bk-fact}, $\K^n$ is categorical in $\lambda$, $\lambda^+$ and is $(<\aleph_0, \le \lambda^+)$-tame. By Theorem \ref{shortness-thm}, $\K$ is $(<\aleph_0, \lambda)$-type-short over $\lambda$-sized models. Thus one can apply Fact \ref{good-frame-constr} (where $\kappa$ there stands for $\aleph_0$ here) to get a weakly successful type-full good $\lambda$-frame $\s$ on $\K^n$. By Fact \ref{succ-criteria}, $\s$ is actually successful. This implies that $\s^{k, n}$ is successful by canonicity (Fact \ref{canon-fact}). 
  
  Second, assume $k = n - 3$. We can still apply Fact \ref{good-frame-constr} to get the existence of a type-full good $\lambda$-frame $\s$, although we do not know it will be weakly successful (in fact this will fail, see Proposition \ref{negative-prop}).  Then Fact \ref{canon-fact} implies that $\s^{k, n}$ is $\s$ restricted to types in $I$.
\end{proof}

We can give an explicit description of the type-full frame $\s$ guaranteed to exist by Theorem \ref{abstract-positive}.  First, we give a nice characterization of when a model is universal or limit over another.

\begin{theorem} \label{universal-char-claim}
  Let $n \in [3, \omega)$. Let $k \le n - 3$ and let $M_0, M_1 \in \K_{\aleph_k}^n$. Then $M_1$ is universal over $M_0$ if and only if $|I(M_1) - I(M_0)| = \|M_1\|$. In particular, $M_1$ is universal over $M_0$ if and only if $M_1$ is limit over $M_0$.
\end{theorem}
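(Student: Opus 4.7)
The plan is to derive both biconditionals from a single uniqueness principle: whenever $M_0 \lea M_\ell \in \K^n_{\aleph_k}$ for $\ell = 1, 2$ and $|I(M_1) \setminus I(M_0)| = |I(M_2) \setminus I(M_0)|$, one has $M_1 \cong M_2$ over $M_0$. To prove this, I fix a solution $h^{M_0}$ for $M_0$ (which exists by Fact \ref{amal-sol-fact}, as $0 + k \le n - 2$) and extend it to solutions $h^{M_\ell}$ for each $M_\ell$ using $1$-amalgamation of solutions over $\aleph_k$-sized sets, requiring $1 + k \le n - 2$, which is precisely the hypothesis $k \le n - 3$. I then pick a bijection $h_0 : I(M_1) \to I(M_2)$ extending $\text{id}_{I(M_0)}$, and apply Fact \ref{sol-iso-corr}(2) to obtain a unique iso $f : M_1 \cong M_2$ extending $h_0$ and conjugating $h^{M_1}$ to $h^{M_2}$. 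Applying the same fact in the base case $(M_0, M_0, \text{id}_{I(M_0)}, h^{M_0}, h^{M_0})$ yields $\text{id}_{M_0}$ as the unique iso on that data; the ``moreover'' clause of Fact \ref{sol-iso-corr}(2) then forces $f$ to extend $\text{id}_{M_0}$, so $f|_{M_0} = \text{id}_{M_0}$.

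With uniqueness in hand, the first biconditional follows. For the forward direction, given $M_1$ universal over $M_0$, I produce some $M_1' \gea M_0$ in $\K^n_{\aleph_k}$ with $|I(M_1') \setminus I(M_0)| = \aleph_k$ by iterating proper extensions of length $\aleph_k$ (possible because $\K^n_{\aleph_k}$ has no maximal models by Fact \ref{bk-fact}(1), and any proper extension must add an index element -- a same-index extension would force equal stalks by $1$-transitivity of the $G$-action). Universality embeds $M_1'$ into $M_1$ over $M_0$, so $|I(M_1) \setminus I(M_0)| \ge \aleph_k$, with equality from $\|M_1\| = \aleph_k$. Conversely, if $|I(M_1) \setminus I(M_0)| = \aleph_k$ and $N \in \K^n_{\aleph_k}$ extends $M_0$, I pick an injection $I(N) \to I(M_1)$ fixing $I(M_0)$; by Lemma \ref{ext-imp-stand} applied to standardize $M_0 \lea M_1$, I may take $M_1' \lea M_1$ to be the standard submodel with that image index set, giving $M_0 \lea M_1' \lea M_1$ and $|I(M_1') \setminus I(M_0)| = |I(N) \setminus I(M_0)|$. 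The uniqueness principle then provides $N \cong_{M_0} M_1'$, hence the required $\lea$-embedding $N \to M_1$ over $M_0$.

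For the ``in particular'' clause, limit over $M_0$ implies universal over $M_0$ in any AEC with amalgamation. For the converse, iterate the first biconditional: build a $\lea$-increasing continuous chain $\seq{M^i : i \le \omega}$ with $M^0 = M_0$ and $|I(M^{i+1}) \setminus I(M^i)| = \aleph_k$ at each successor step, so each $M^{i+1}$ is universal over $M^i$. Then $M^\omega$ is a limit model over $M_0$ satisfying $|I(M^\omega) \setminus I(M_0)| = \aleph_k$, and uniqueness gives $M_1 \cong_{M_0} M^\omega$. The main obstacle throughout is verifying that $f|_{M_0} = \text{id}_{M_0}$ in the uniqueness principle; this is precisely what the ``moreover'' clause of Fact \ref{sol-iso-corr}(2) secures once we arrange for all solutions involved to extend a common $h^{M_0}$.
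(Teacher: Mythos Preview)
Your proposal is correct and follows essentially the same approach as the paper: both arguments hinge on extending a solution for $M_0$ to solutions for the larger models (using $1$-amalgamation of solutions, available since $k \le n-3$) and then invoking Fact~\ref{sol-iso-corr}(2) to produce the needed isomorphism over $M_0$. The paper's proof is slightly more compressed---it embeds your uniqueness principle directly into the backward direction rather than stating it separately, and it leaves the ``in particular'' clause implicit---while you spell out the limit-model construction explicitly; but the mathematical content is the same.
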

\begin{proof}
 First suppose that $M_1$ is universal over $M_0$.  We don't have maximal models, so let $M_0 \lean N_*$ be such that $|I(N_*) - I(M_0)| = \|M_1\|$.  We have that $\|N_*\| = \|M_1\|$, so there is an embedding $f:N_* \to_{M_0} M_1$.  Then $f(I(N_*)) \subseteq I(M_1)$.

Now suppose that $|I(M_1) - I(M_0)| = \|M_1\|$ and let $M_0 \lean N_*$ with $\|N_*\| = \|M_1\|$.  Let $I^- \subseteq I(M_1) - I(M_0)$ be of size $|I(N_*) - I(M_0)|$ and let $M^- \lean M_1$ have $I(M^-) = I(M_0) \cup I(M^-)$.  Let $(f, g)$ be a solution for $M_0$.  Since we have extensions of solutions, we can extend this to solutions $(f^-, g^-)$ on $M^-$ and $(f_*, g_*)$ on $N_*$.    The whole point of solutions is that this allows us to build an isomorphism between $M^-$ and $N_*$ over $M_0$ by mapping the solutions to each other (see Theorem \ref{sol-iso-corr}).
\end{proof}

Let $M_0 \subseteq M \subsetneq N \in \K^n_{\aleph_k}$ and $a \in N - M$.  Following the proof of Fact \ref{good-frame-constr} to \cite[6.4]{vv-symmetry-transfer-afml}, the definition of $\s$ is given by
 \begin{center} 
for any $\mu \in [\aleph_0, \aleph_{n-3}]$, $\gtp(a/M; N)$ does not fork over $M_0$ if and only if for some/any $M_0^* \subseteq M_0$ of size $\mu$ such that $|I(M_0) - I(M_0^*)| \geq \mu$, we have $\gtp(a/M;N)$ does not $\mu$-split over $M_0^*$.  
\end{center}
The flexibility on $\mu$ follows from tameness and \cite[6.9]{tame-frames-revisited-jsl}, while the ``some/any" equivalence follows because these cardinals have extension of solutions.

Using that Galois types correspond to existential first-order types (Theorem \ref{shortness-thm}) and other specifics of the example, we can give more explicit descriptions of the nonforking in each sort.  To do so, for $\gamma \in G(N)$, define 
$$\supp_N\gamma := \{i \in I(N) : \exists k \in K(N). N\vDash ``i \in k \wedge \gamma(k) = 1"\}$$
This is the support of $\gamma$ as viewed as a function from $K(N)$ to $\mathbb{Z}_2$.  The structure on $\K^n$ makes this the image of $\gamma$ under certain functions of the language.  In particular, the support cannot grow in any extension and if $N_0 \subseteq N$ and $\supp_N\gamma \subseteq N_0$, then $\gamma \in N_0$.

We can characterize nonforking according to $\s$ along the following lines:
\begin{prop} \label{exp-tf-nf}
 Fix $n \in [3, \omega)$ and $k \leq n - 3$. Let $M_0 \subseteq M \subseteq N \in \K^n_{\aleph_k}$ and $a \in N - M$.
 
 \begin{enumerate}
\item If $a \in I(N)$ or $a \in K(N)$, then $\gtp(a/M; N)$ does not fork over $M_0$ if and only if $a \in N - M$.
\item \label{nf-G} If $\gamma \in G(N)$, then $\gtp(\gamma/M; N)$ does not fork over $M_0$ if and only if $\supp_N\gamma \cap M \subseteq M_0$ and $\gamma \not \in M$.
\item If $a \in G^*(N)$, then $\pi(a) \in K(N)$ is the index of the fiber and:
\begin{enumerate}
	\item If $\pi(a) \in K(N) - K(M)$, then $\gtp(a/M;N)$ does not fork over $M_0$.
	\item If $\pi(a) \in K(M) - K(M_0)$, then $\gtp(a/M;N)$ forks over $M_0$.
	\item If $\pi(a) \in K(M_0)$, then there is some $a_0 \in G^*(M_0)$ and $\gamma\in G(N)$ such that
$$N \vDash a_0 + \gamma_0 = a$$
Then $\gtp(a/M; N)$ does not fork over $M_0$ if and only if $\gtp(\gamma_0/M;N)$ does not fork over $M_0$.
\end{enumerate}
\item If $a \in H^*(N)$, then $\gtp(a/M; N)$ does not fork over $M_0$ if and only if $a \in N-M$.
\end{enumerate}
\end{prop}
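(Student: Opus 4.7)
The plan is to use the non-$\mu$-splitting characterization of nonforking stated just before the proposition: for any convenient $\mu \in [\aleph_0, \aleph_{n-3}]$, $\gtp(a/M; N)$ does not fork over $M_0$ if and only if it does not $\mu$-split over some $M_0^* \subseteq M_0$ of size $\mu$ with $|I(M_0) - I(M_0^*)| \ge \mu$. Combined with Theorem \ref{shortness-thm} (Galois types over $\leq\aleph_{n-3}$-sized models are determined by first-order existential types), checking non-$\mu$-splitting becomes a concrete combinatorial computation inside the Hart-Shelah structure. I would take $\mu = \aleph_0$ throughout, since all the relevant ``small'' data (supports of $G$-elements, fiber indices, decompositions $a = a_0 + \gamma_0$) is at most countable. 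The proof then splits by the sort of $a$.

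Case (1) is nearly immediate. For $a \in I(N)$, canonicity (Fact \ref{canon-fact}) forces $\s$ restricted to $I$-types to equal $\s^{k,n}$, where nonforking is just nonalgebraicity. For $a \in K(N)$, recall $K(N) = [I(N)]^n$; since $a \notin M$, the $n$-subset coded by $a$ has some element in $I(N) - I(M)$, and a direct inspection of existential types rules out splitting. Case (4) is similar: each $H^*$-fiber has only two points (via the $\mathbb{Z}_2$-action), so $a \in H^*(N) - M$ either sits in a new fiber or equals $t_H(\pi(a),1,a')$ for some $a' \in H^*(M)$, and either way the type of $a$ over $M$ is determined by data outside $M$ plus $M_0$-data, giving non-splitting.

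Case (2) is the key structural computation: each $\gamma \in G(N)$ has finite support $\supp_N \gamma$, and this support is definable in the language, so it cannot grow in extensions. The existential type of $\gamma$ over any submodel $M' \subseteq M$ is determined by $\supp_N \gamma \cap M'$ (values on the support are forced, since we are over $\mathbb{Z}_2$). Hence $\gtp(\gamma/M;N)$ does not $\mu$-split over any small submodel $M_0^* \subseteq M_0$ containing the finite set $\supp_N \gamma \cap M$, and such $M_0^*$ exists precisely when $\supp_N \gamma \cap M \subseteq M_0$; nonalgebraicity $\gamma \notin M$ is needed for $\gtp(\gamma/M;N)$ to be a basic type at all.

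Case (3), the $G^*$-case, splits into the three listed subcases. Subcase (a) follows as in case (1): the entire fiber is new, and no splitting witness can be extracted from $M$. Subcase (c) reduces to case (2) via the decomposition $a = a_0 + \gamma_0$ for $a_0 \in G^*(M_0)_{\pi(a)}$ (which exists by $1$-transitivity); since the $G$-action is $M_0$-definable and $a_0 \in M_0$, splitting of $a$ and of $\gamma_0$ coincide. The main obstacle is subcase (b), where $\pi(a) \in K(M) - K(M_0)$: one must exhibit splitting over every small $M_0^* \subseteq M_0$. Writing $\pi(a) = \{i_1, \dots, i_n\}$ with some $i_j \in I(M) - I(M_0)$ (say $i_1$), choose $i' \in I(M) - (I(M_0^*) \cup \pi(a))$ and use disjoint amalgamation together with extensions of solutions (Fact \ref{amal-sol-fact} and Fact \ref{sol-iso-corr}) to construct $M_0^*$-isomorphic small substructures $M_1, M_2 \subseteq M$ containing $\{i_1,\dots,i_n\}$ and $\{i',i_2,\dots,i_n\}$ respectively, with the isomorphism swapping $i_1$ and $i'$ over $M_0^*$. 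Then $\pi(a) \in K(M_1)$ but $\pi(a) \notin K(M_2)$, so the existential type of $a$ over $M_1$ specifies a fiber index that the pushed-forward type over $M_2$ cannot match, witnessing splitting.
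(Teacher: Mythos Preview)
Your proposal is correct and follows the same overall strategy as the paper: both reduce nonforking to non-$\mu$-splitting and then compute splitting sort-by-sort using the explicit combinatorics of the Hart--Shelah structure together with extension of solutions. The paper in fact only writes out case~(\ref{nf-G}) in detail and declares the rest ``a straightforward calculation,'' so your level of detail is comparable.

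There is one stylistic difference worth noting in case~(\ref{nf-G}). You argue that the existential type of $\gamma$ over $M'$ is determined by $\supp_N\gamma \cap M'$, and deduce non-splitting from that. The paper instead argues more constructively: given any $h:M_1^*\cong_{M_0^*}M_2^*$, it uses extension of solutions to build an automorphism $h^+$ of $N$ extending $h$ and fixing $\gamma$ (by arranging $h^+$ to fix $I(N)\setminus(I(M_1^*)\cup I(M_2^*))$ pointwise and to act as $h^{-1}$ on $I(M_2^*)$), which immediately gives $h(p\rest M_1^*)=p\rest M_2^*$. Your type-determination claim is true, but strictly speaking it rests on the same solution-based isomorphism machinery you would need to justify it; the paper's version bypasses the intermediate claim. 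For the splitting direction, your argument and the paper's coincide: pick witnesses $M_1^*,M_2^*$ so that the offending support elements lie in $M_1^*\setminus M_2^*$, and exhibit an explicit existential formula distinguishing the types.

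Two minor remarks. First, in case~(4) your second subcase (``$a=t_H(\pi(a),1,a')$ for some $a'\in H^*(M)$'') is vacuous: since each $H^*$-fiber has exactly two points and fibers over $K(M)$ do not grow in $N$, any $a\in H^*(N)\setminus M$ must have $\pi(a)\in K(N)\setminus K(M)$. Second, in case~(3)(b), your swap construction works, but be careful that the isomorphism fixes all of $M_0^*$ (not just $I(M_0^*)$); this is exactly what extending a common solution on $M_0^*$ to solutions on $M_1,M_2$ buys you, as in the paper's argument for case~(\ref{nf-G}).
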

Note that the forking for $G^*$ and $H^*$ have identical characterizations, but since we always have $H(N) = H(M) = H(M_0)$, nonforking reduces to nonalgebraicity for $H^*$.
\begin{proof}
The proof of each case is a straightforward calculation along the lines of \cite[10.2]{ext-frame-jml}.  As an example, we show (\ref{nf-G}).

First, suppose that $\supp_N \gamma \cap M \subseteq M_0$.  Let $M_0^* \subseteq M_0$ and $M_0^* \subseteq M_\ell^* \subseteq M$ for $\ell = 0,1$ such that
\begin{itemize}
	\item $|I(M_0) - I(M_0^*)| \geq \mu$ with $\supp_N\gamma \cap M \subseteq M_0^*$; and
	\item there is $h:M_1^* \cong_{M_0^*} M_2^*$.
\end{itemize}
Then, using the extension of solutions, we can extend $h$ to an automorphism $h^+$ of $N$ such that 
\begin{itemize}
	\item if $x \in I(M_2^*)$, then $h^+(x) = h^{-1}(x)$; and
	\item if $x \in I(N) - \left(I(M_1^*) \cup I(M_2^*)\right)$, then $h^+(x) = x$.
\end{itemize}
Thus $h^+(\gamma) = \gamma$.  This shows that $h(p \rest M_1^*) = p \rest M_2^*$.  Since the $M_\ell^*$ were arbitrary, $p$ does not $\mu$-split over $M^*_0$.

Second, suppose that $\supp_N \gamma \cap M \not \subseteq M_0$ and let $\{i_1, \dots, i_r\} = \supp_N \gamma \cap \left(M - M_0\right)$.  Find $M_0^* \subseteq M_0$ containing $\supp_N \gamma \cap M_0$ such that $|I(M_0)- I(M_0^*)| = \mu$.  Then we can find $M_1^*, M_2^* \in \K^n$ such that
\begin{itemize}
	\item $M_0^* \subseteq M_\ell^* \subseteq M$;
	\item $|I(M) - I(M_\ell^*)| = |I(M_\ell^*) - I(M_0^*)| = \mu$; and
	\item $\{i_1, \dots, i_r\} \subseteq M_1^* - M_2^*$.
\end{itemize}
By the extension of solutions, there is an isomorphism $h:M_1^* \cong_{M_0^*} M_2^*$.  Then
$$N \vDash \left[\exists k \in K \left(i_1 \in k \wedge \gamma(k) = 1\right)\right] \wedge \neg \left[\exists k \in K \left(i_1 \in k \wedge \gamma(k) = 1\right)\right]$$
This witnesses that $h\left(\gtp(\gamma/M_1^*; N)\right) \neq \gtp(\gamma/M_2^*; N)$ and, thus, that $\gtp(\gamma/M; N)$ $\mu$-splits over $M_0$.
\end{proof}

Note that the case $k = 0$ is missing from Theorem \ref{abstract-positive}, and will have to be treated differently (see Theorem \ref{hs-successful} and Corollary \ref{hs-aleph0-frame}). On the negative side, we show that $\s^{n - 3, n}$ cannot be successful. First, we show that it is $\goodp$ (Definition \ref{goodp-def}).

\begin{lemma}\label{goodp-lem}
  For $n \in [3, \omega)$ and $k \le n - 3$, $\s^{k, n}$ is $\goodp$.
\end{lemma}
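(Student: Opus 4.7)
The plan is to exploit the very simple nature of the frame $\s^{k,n}$: its nonforking relation coincides with \emph{nonalgebraicity} of the realizing element, since by Definition \ref{simple-frame-def} we have that $\gtp(a/M_1;M_2)$ does not fork over $M_0$ if and only if $a \in I(M_2) \setminus I(M_1)$ (the base $M_0$ plays no role at all). So being $\goodp$ should follow from a purely cardinality-theoretic argument, without needing to invoke saturation.

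Set $\lambda := \aleph_k$. Suppose for a contradiction that $\s^{k,n}$ is not $\goodp$, and let $\langle M_i : i < \lambda^+\rangle$, $\langle N_i : i < \lambda^+\rangle$, $p \in \Sbs(M_0)$, and $\langle a_{i+1} : i < \lambda^+\rangle$ be witnesses as in Definition \ref{goodp-def}. Since basic types in $\s^{k,n}$ are types of elements of $I$, the condition that $\gtp(a_{i+1}/M_{i+1}; M_{i+2})$ is a basic nonforking extension of $p$ gives $a_{i+1} \in I(M_{i+2}) \setminus I(M_{i+1})$. On the other hand, since $a_{i+1}\in I(M_{i+2}) \setminus I(M_0)$, the type $\gtp(a_{i+1}/N_0;N_{i+2})$ is automatically an extension of $p$; hence the hypothesis that it is \emph{not} a nonforking extension translates, by the characterization of forking in $\s^{k,n}$, to $a_{i+1} \in I(N_0)$.

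The key observation is then that the sequence $\langle a_{i+1} : i < \lambda^+\rangle$ is pairwise distinct. Indeed, for $j < i$ we have $j + 2 \le i + 1$, so $M_{j+2} \lea M_{i+1}$, whence $a_{j+1} \in I(M_{j+2}) \subseteq I(M_{i+1})$; but $a_{i+1} \notin I(M_{i+1})$, so $a_{i+1} \neq a_{j+1}$. Therefore $\{a_{i+1} : i < \lambda^+\}$ is a subset of $I(N_0)$ of cardinality $\lambda^+$, contradicting $|I(N_0)| \le \|N_0\| = \lambda$.

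There is no real obstacle here: the whole argument is essentially a pigeonhole on the fixed $\lambda$-sized set $I(N_0)$, and the saturation clause in Definition \ref{goodp-def} is not needed at all. The only thing to double-check is the unwinding of ``nonforking extension of $p$'' for the frame $\s^{k,n}$, which is immediate from Definition \ref{simple-frame-def}.
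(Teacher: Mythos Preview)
Your argument is correct and takes a genuinely different (and slightly more elementary) route than the paper's. The paper argues by finding a club of indices $i < \lambda^+$ with $M_{\lambda^+} \cap N_i = M_i$; for any such $i$, since $a_{i+1} \in I(M_{i+2}) \setminus I(M_{i+1})$ and $M_{i+2} \subseteq M_{\lambda^+}$, one gets $a_{i+1} \notin N_i$, so $\gtp(a_{i+1}/N_0; N_{i+2})$ does not fork over $M_0$, a direct contradiction at that single index. You instead deduce from the failure of nonforking that $a_{i+1} \in I(N_0)$ for \emph{every} $i$, and then finish by pigeonhole: the $a_{i+1}$ are pairwise distinct, so $|I(N_0)| \ge \lambda^+$, impossible. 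Both proofs rest on the same triviality of forking in $\s^{k,n}$; yours trades the (easy) club computation for a cardinality count, which is arguably cleaner.

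One small remark on your write-up: the clause ``since $a_{i+1}\in I(M_{i+2}) \setminus I(M_0)$, the type $\gtp(a_{i+1}/N_0;N_{i+2})$ is automatically an extension of $p$'' deserves a word more of justification. The cleanest reason is simply that $\gtp(a_{i+1}/M_{i+1};M_{i+2})$ is assumed to extend $p$, so $\gtp(a_{i+1}/M_0;M_{i+2}) = p$, and this persists when we enlarge the ambient model to $N_{i+2}$. (Alternatively, one can note that in $\s^{k,n}$ there is a \emph{unique} basic type over any $M$, so any $I$-element outside $M_0$ realizes $p$; this is true but not stated explicitly in the paper.)
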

\begin{proof}
  Essentially this is because forking is trivial. In details, suppose that $\s^{k, n}$ is not $\text{good}^+$ and fix $\seq{M_i : i < \lambda^+}$, $\seq{N_i : i < \lambda^+}$, $\seq{a_i : i < \lambda^+}$ and $p$ witnessing it. The set of $i < \lambda^+$ such that $M_{\lambda^+} \cap N_i = M_i$ is club, so pick such an $i$. Since $\gtp (a_{i + 1} / M_{i + 1}; M_{i + 2})$ is a nonforking extension of $p$, we know that $a_{i + 1} \in I (M_{i + 2}) \backslash I (M_{i + 1})$. Because $M_{\lambda^+} \cap N_i = M_i$, we have that $a_{i + 1} \notin |N_i|$. Since $a_{i + 1} \in I (M_{i  +2})$, also $a_{i + 1} \in I (N_{i + 2})$. Therefore $\gtp (a_{i + 1} / N_i; N_{i + 2})$ does not fork over $M_0$, contradicting the defining assumption on $\seq{N_i : i < \lambda^+}$.
\end{proof}

\begin{cor}\label{not-succ-prop}
  For $n \in [3, \omega)$, $\s^{n - 3, n}$ is \emph{not} successful.
\end{cor}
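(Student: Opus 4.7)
The plan is to argue by contradiction using the extension machinery for successful frames. Assume toward a contradiction that $\s^{n-3,n}$ is successful. By Lemma \ref{goodp-lem} it is also $\goodp$, so Fact \ref{succ-fact} produces a good $\aleph_{n-2}$-frame $\s^+$ whose underlying AEC consists of the saturated models in $\K^n$ of cardinality $\aleph_{n-2}$, ordered by the strong substructure relation inherited from $\K^n$.

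Next, the good-frame axioms in Definition \ref{good-frame-def} force this underlying AEC to be nonempty, so at least one saturated model of cardinality $\aleph_{n-2}$ exists. By Fact \ref{bk-fact}, $\K^n$ is categorical in $\aleph_{n-2}$, so that saturated model is (up to isomorphism) the unique model of that cardinality. Consequently the underlying AEC of $\s^+$ coincides with $\K^n_{\aleph_{n-2}}$ itself, and the stability clause built into Definition \ref{good-frame-def} then forces $\K^n$ to be stable in $\aleph_{n-2}$, directly contradicting the instability statement of Fact \ref{bk-fact}.

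I do not anticipate a substantive obstacle: the corollary merely assembles Lemma \ref{goodp-lem}, the extension theorem (Fact \ref{succ-fact}), the categoricity/instability pair from Fact \ref{bk-fact}, and the stability axiom for good frames. The one conceptual point worth highlighting is that categoricity at $\aleph_{n-2}$ closes the usual escape hatch in Fact \ref{succ-fact}, namely restricting attention to saturated models: here the saturated class equals the full class, so stability of the former is stability of the latter. Note also that this argument is logically independent of (and weaker than) the not-weakly-successful statement that will be established later as Proposition \ref{negative-prop}; it relies only on what has already been proved in the excerpt.
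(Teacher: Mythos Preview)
Your proof is correct and follows essentially the same route as the paper: assume successful, use Fact~\ref{succ-fact} to get a good $\aleph_{n-2}$-frame on the saturated models, then invoke categoricity in $\aleph_{n-2}$ to conclude this is all of $\K^n_{\aleph_{n-2}}$, contradicting instability there. Your version is in fact slightly more explicit than the paper's, since you spell out the appeal to Lemma~\ref{goodp-lem} (which Fact~\ref{succ-fact} as stated requires) and the reason the saturated class coincides with the full class; the paper leaves both implicit.
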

\begin{proof}
  Suppose for a contradiction that $\s^{n - 3, n}$ is successful. Let $\lambda := \aleph_{n - 3}$. By Fact \ref{succ-fact}, we can get a good $\lambda^+$-frame on the saturated models of $\K_{\lambda^+}^n$. Since $\K^n$ is categorical in $\lambda^+$, this gives a good $\lambda^+$-frame on $\K_{\lambda^+}^n$. In particular, $\K^n$ is stable in $\lambda^+$, contradicting Fact \ref{bk-fact}.
\end{proof}

Notice that the proof gives no information as to which part of the definition of successful fails: i.e.\ whether $\s^{n - 3, n}$ has the existence property for uniqueness triples (and then smoothness for $\leanfn$ must fail) or not. To understand this, we take a closer look at uniqueness triples in the specific context of the Hart-Shelah example.

\section{Uniqueness triples in Hart-Shelah}

In this section, we show that the frame $\s^{n-3,n}$ is \emph{not} weakly successful.  This follows from the fact that the existence of uniqueness triples corresponds exactly to amalgamation of solutions.

The following says that it is sufficient to check one point extensions when trying to build uniqueness triples.

\begin{lemma}\label{main-red-lem}
  Let $n \in [3, \omega)$ and let $k \le n - 3$. The good $\aleph_k$-frame $\s^{k, n}$ (see Definition \ref{simple-frame-def}) is weakly successful if the following holds.

    \begin{itemize}
      \item[$(\ast)$] Whenever $M, M_a, M_b, M_{ab} \in \K^n_{\aleph_k}$ are such that:
        \begin{enumerate}
	\item $I(M_x) = I(M) \cup \{x \}$ for $x = a, b, ab$;
	\item $M \lean M_a, M_b$ and $M_b \lean M_{ab}$; and
	\item there is $f_\ell:M_a \to_M M_{ab}$ such that $f_\ell(a) = a$.
        \end{enumerate}
        Then there is $f_*: M_{ab} \cong_{M_b} M_{ab}$ such that $f_* \circ f_1 = f_2$
    \end{itemize}
\end{lemma}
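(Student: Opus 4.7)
The plan is to verify the uniqueness triple property by applying $(\ast)$ iteratively along a resolution of $M_1$ as a chain of single-index-point extensions over $M$. Given $M \in \K^n_{\aleph_k}$ and $p \in \gSbs(M)$, write $p = \gtp(a/M; N_0)$ with $a \in I(N_0) \setminus I(M)$ and let $N$ be the full substructure of $N_0$ with $I(N) = I(M) \cup \{a\}$; then $\gtp(a/M; N) = p$ and I claim $(a, M, N)$ is a uniqueness triple. To verify this, fix $M_1 \gea M$ in $\K^n_{\aleph_k}$ and two nonforking amalgams $(f_1^x, f_2^x, N_1^x)$ of $N$ and $M_1$ over $M$ for $x = \alpha, \beta$. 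By standard AEC moves (replacing $N_1^x$ with an isomorphic copy to absorb $f_2^x$ into the identity, then shrinking to the substructure generated by $M_1 \cup f_1^x[N]$) we may assume $f_2^x = \id_{M_1}$, $M_1 \lea N_1^x$, and $I(N_1^x) = I(M_1) \cup \{a_x\}$, where $a_x := f_1^x(a)$; it then suffices to construct an isomorphism $\Phi : N_1^\alpha \cong_{M_1} N_1^\beta$ with $\Phi(a_\alpha) = a_\beta$.

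Resolve $M_1$ as an increasing continuous chain $\seq{M^i : i \le \alpha^*}$ with $M^0 = M$, $M^{\alpha^*} = M_1$, and $I(M^{i+1}) = I(M^i) \cup \{c_i\}$ (enumerate $I(M_1) \setminus I(M)$ and take full substructures). For each $i$ and $x \in \{\alpha, \beta\}$, let $A^{i, x}$ be the full substructure of $N_1^x$ with index set $I(M^i) \cup \{a_x\}$; this is $\lea$-increasing continuous in $i$ with $A^{0, x} = f_1^x[N]$ and $A^{\alpha^*, x} = N_1^x$. The goal is to build an increasing continuous sequence $\seq{\Phi_i : i \le \alpha^*}$ of isomorphisms $\Phi_i : A^{i, \alpha} \cong A^{i, \beta}$ fixing $M^i$ and sending $a_\alpha \to a_\beta$; the desired $\Phi$ is then $\Phi_{\alpha^*}$. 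The base case $\Phi_0 := f_1^\beta \circ (f_1^\alpha)^{-1}$ has the required properties, and limit stages take unions.

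At a successor step, given $\Phi_i$, I would first produce some auxiliary isomorphism $\tilde\Phi : A^{i+1, \alpha} \cong_{M^{i+1}} A^{i+1, \beta}$ with $\tilde\Phi(a_\alpha) = a_\beta$: this exists because both are single-index-point extensions of $M^{i+1}$, hence isomorphic over $M^{i+1}$ by a map identifying the new index elements, via Lemma \ref{ext-imp-stand} and Fact \ref{sol-iso-corr} using the extension of solutions afforded by Fact \ref{amal-sol-fact} (valid in our range since $k \le n - 3$). Now $\Phi_i$ (composed with the inclusion $A^{i, \beta} \hookrightarrow A^{i+1, \beta}$) and $\tilde\Phi \restriction A^{i, \alpha}$ are two embeddings $A^{i, \alpha} \to_{M^i} A^{i+1, \beta}$ both sending $a_\alpha \to a_\beta$. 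Relabeling $a_\beta$ in $A^{i+1, \beta}$ as the common ``$a$'' element to match the hypothesis of $(\ast)$, apply $(\ast)$ with $M := M^i$, $M_a := A^{i, \alpha}$, $M_b := M^{i+1}$, and $M_{ab} := A^{i+1, \beta}$ to obtain $f_* \in \Aut_{M^{i+1}}(A^{i+1, \beta})$ with $f_* \circ \Phi_i = \tilde\Phi \restriction A^{i, \alpha}$. Setting $\Phi_{i+1} := f_*^{-1} \circ \tilde\Phi$ yields an isomorphism that extends $\Phi_i$ (by the conjugation relation), fixes $M^{i+1}$ (since both $f_*$ and $\tilde\Phi$ do), and sends $a_\alpha \to a_\beta$ (the last because evaluating the conjugation at $a_\alpha$ forces $f_*(a_\beta) = a_\beta$, whence $f_*^{-1}(a_\beta) = a_\beta$).

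The main obstacle I anticipate is the bookkeeping needed to match the distinguished index elements when invoking $(\ast)$ at each successor step, along with the auxiliary existence of $\tilde\Phi$; both are handled by the Hart-Shelah solution-extension machinery available precisely in the cardinal range $k \le n - 3$ (Fact \ref{amal-sol-fact}). Once the setup is in place, $(\ast)$ acts as a clean iterative correction and the uniqueness triple property follows.
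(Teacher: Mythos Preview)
Your proposal is correct and follows essentially the same inductive strategy as the paper: resolve $M_1$ over $M$ by single-index-point extensions, carry along the corresponding one-point extensions on both sides, and use $(\ast)$ at each successor step to build a coherent chain of isomorphisms. The only notable difference is bookkeeping: the paper applies $(\ast)$ directly in its ``renamed'' form (see the Remark following the lemma), whereas you first manufacture an auxiliary isomorphism $\tilde\Phi$ via the solution-extension machinery and then invoke $(\ast)$ in its literal form to correct it; these amount to the same thing, since the renaming in the Remark implicitly uses exactly that solution-extension step. Your up-front shrinking of the amalgams to one-point extensions also replaces the paper's use of disjoint amalgamation at the end, but again this is a cosmetic rearrangement.
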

\begin{remark}
  By an easy renaming exercise, we could have the range of $f_\ell$ be distinct one point extensions of $M_b$ with $f_\ell(a)$ being that point.
\end{remark}
\begin{proof}[Proof of Lemma \ref{main-red-lem}]
  Suppose that $(*)$ holds.  Let $p =\gtp(a/M;N^+) \in \gS^{bs}(M)$ and find some\footnote{$M_a$ is not unique, but there is such an $M_a$} $M_a \lean N^+$ so $I(M_a)  = I(M) \cup \{a\}$.  We want to show that this is a uniqueness triple.  To this end, suppose that we have $N \succ M$, $N \lean M_\ell$, and $f_\ell:M_a \to_M M_\ell$ with $f_\ell(a) \not\in N$.  Enumerate $I(N) - I(M) = \{ a_i \mid i < \mu \leq \aleph_k\}$; Without loss of generality $I(M_1) \cap I(M_2) = I(N)$.  Let $M_\ell^- \lean M_\ell$ be such that $I(M_\ell^-) = \{f_\ell(a)\} \cup I(N)$.

{\bf Claim:}  We can find $f^*_-:M^-_1 \cong_N M^-_2$ such that $(f^*_-)^{-1} \circ f_1 = f_2$.

This is enough: from the claim, we have $M_1^- \lean M_1$ and $f^*_-: M_1^- \to M_2$.  The class has disjoint amalgamation by Fact \ref{bk-fact}, so find a disjoint amalgam $N^*$ with maps $g_\ell:M_\ell \to N^*$ such that $g_1 \rest M_1^- = g_2 \circ f^*_-$.  This is the witness required to have that $(a, M, M_a)$ is a uniqueness triple.

{\bf Proof of the claim:}  We can find resolutions $\seq{N_i : i < \mu}$ and $\seq{M^\ell_i \mid i < \mu}$ such that:

\begin{enumerate}
	\item $M \lean N_i \lean M^\ell_i \lean M_\ell^-$ and $f_\ell(M_a) \lean M_i^\ell$; and
	\item $I(N) = I(M) \cup \{ a_j \mid j<i\}$ and $I(M_i^\ell) = I(N_i) \cup \{f_\ell(a)\}$.
\end{enumerate}
The values of $I$ for these models is specified, which determines $K$ and $G$.  Then $G^*$ and $H^*$ are just picked to be subsets of the larger models version that is closed under the relevant action.  Since there are embeddings going everywhere, this can be done.

We build increasing, continuous $f_i^*:M_i^1 \cong_{N_i} M_i^2$ such that $f^*_i \circ f_1 = f_2$ by induction on $i \geq 1$.
\begin{itemize}
	\item For $i=1$, we use $(*)$ taking $b = a_0$ (and using the renamed formulation).  This gives $f_1^*:M_1^1 \cong_{N_1} M_1^2$.
	\item For $i$ limit, we take unions of everything.
	\item For $i = j+1$, we have an instance of $(*)$:
	\[
\xymatrix{ & M^1_{j+1} & \\
M_j^1 \ar[ur] \ar[rr]_>>>>>>>{f^*_i} & & M^2_{j+1}\\
N_j \ar[r] \ar[u] & N_{j+1} \ar[uu] \ar[ur] & }
\]
	Then we can find $f_{i+1}^*:M^1_{j+1} \cong M^2_{j+1}$ that works. 
\end{itemize}
\end{proof}

We can now give a direct proof of Theorem \ref{abstract-positive} that also treats the case $k = 0$. 

\begin{theorem}\label{hs-successful}
  Let $n \in [3, \omega)$. For any $k < n - 3$, $\s^{k, n}$ is successful.
\end{theorem}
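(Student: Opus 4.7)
The plan is to invoke Lemma \ref{main-red-lem} to reduce weak successfulness of $\s^{k,n}$ to verifying the one-point condition $(\ast)$, to check $(\ast)$ directly via solution amalgamation, and then to bootstrap from weakly successful to successful by appealing to Fact \ref{succ-criteria}. The entire argument is powered by the numerical observation that our hypothesis $k < n-3$ is equivalent to $k + 2 \le n-2$, which by Fact \ref{amal-sol-fact} is exactly what delivers (and fails to deliver at the next cardinal) $2$-amalgamation of solutions over $\aleph_k$-sized sets; the weaker $0$- and $1$-amalgamation (existence and extension of solutions) come for free.

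To verify $(\ast)$, I start with the given $M, M_a, M_b, M_{ab}$ and $M$-embeddings $f_1, f_2 : M_a \to M_{ab}$ with $f_\ell(a) = a$, and use Lemma \ref{ext-imp-stand} to assume all four models are standard. I pick a solution $h^M$ on $M$, extend it by $1$-amalgamation to solutions $h^{M_a}$ on $M_a$ and $h^{M_b}$ on $M_b$, and push $h^{M_a}$ forward along each $f_\ell$ to $\bar h^\ell := (f_\ell)_* h^{M_a}$, a solution on the substructure $f_\ell(M_a) \subseteq M_{ab}$. Since $f_\ell$ fixes $M$, both $\bar h^\ell$ restrict to $h^M$ on $[I(M)]^n$, so $\bar h^\ell \cup h^{M_b}$ is a coherent partial solution, and $2$-amalgamation over $A = I(M)$ with added points $a, b$ extends it to a full solution $h^{ab,\ell}$ on $M_{ab}$. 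Fact \ref{sol-iso-corr}(2), applied to the identity bijection on $I(M_{ab})$ together with the pair $(h^{ab,1}, h^{ab,2})$, yields the unique automorphism $f_* : M_{ab} \cong M_{ab}$ over $I(M_{ab})$ sending $h^{ab,1}$ to $h^{ab,2}$. Because $h^{ab,\ell} \rest M_b = h^{M_b}$ for both $\ell$, the uniqueness in Fact \ref{sol-iso-corr}(2) applied to $M_b$ forces $f_* \rest M_b = \id_{M_b}$; and since $(f_* \circ f_1)_* h^{M_a} = (f_*)_* \bar h^1 = \bar h^2 = (f_2)_* h^{M_a}$, the uniqueness in Fact \ref{sol-iso-corr}(1) forces $f_* \circ f_1 = f_2$, finishing $(\ast)$.

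With $(\ast)$ in hand, Lemma \ref{main-red-lem} delivers weak successfulness of $\s^{k,n}$, and Fact \ref{succ-criteria} upgrades this to successful (and $\goodp$) since, by Fact \ref{bk-fact}, $\K^n$ is categorical in $\aleph_k$, has amalgamation in $\aleph_{k+1}$, and is $(\aleph_k, \aleph_{k+1})$-tame (this last using $k + 1 \le n - 3$). The delicate step I expect to have to argue carefully is the middle equality $(f_*)_* \bar h^1 = \bar h^2$: the solution $\bar h^1$ lives on the substructure $f_1(M_a)$ of $M_{ab}$, and a priori $f_*$ could carry it into some other substructure with the same $I$-part $I(M_a)$ but a different choice of $G^*$-orbits, rather than onto $f_2(M_a)$. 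The resolution is that $f_*$ fixes the $I$, $K$, $G$, and $H$ sorts pointwise, hence is $G(M_{ab})$-equivariant on each stalk $G^*_u(M_{ab})$, so it must carry the $G(M_a)$-orbit $G^*_u(f_1(M_a))$ onto the unique $G(M_a)$-orbit containing $(f_*)_* \bar h^1(u)$, which by the pushforward equation $(f_*)_* h^{ab,1} = h^{ab,2}$ equals $\bar h^2(u) \in G^*_u(f_2(M_a))$; the analogous statement for the $H^*$ sort is identical.
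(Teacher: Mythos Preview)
Your proof is correct and takes essentially the same approach as the paper: reduce to weak successfulness via Fact~\ref{succ-criteria}, reduce to $(\ast)$ via Lemma~\ref{main-red-lem}, and then verify $(\ast)$ by producing two solutions $h^{ab,1}, h^{ab,2}$ on $M_{ab}$ via $2$-amalgamation of solutions over $I(M)$ (available precisely because $k+2\le n-2$) and reading off $f_\ast$ from them. The only real difference is packaging: the paper builds $f_\ast$ by hand (sending $f^1_{ab}(u)\mapsto f^2_{ab}(u)$ and extending equivariantly) and then checks $Q$-preservation directly, whereas you invoke Fact~\ref{sol-iso-corr} as a black box and recover the side conditions $f_\ast\rest M_b=\id$ and $f_\ast\circ f_1=f_2$ from uniqueness---a slightly cleaner route, though your reduction to standard models via Lemma~\ref{ext-imp-stand} is unnecessary, and the uniqueness you want for $f_\ast\circ f_1=f_2$ is really that of part~(2) of Fact~\ref{sol-iso-corr} (uniqueness of the isomorphism given the solutions and the bijection on $I$), not part~(1).
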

\begin{proof}
  By Fact \ref{succ-criteria} (as in the proof of Theorem \ref{abstract-positive}), it is enough to show that $\s^{k, n}$ is weakly successful. It suffices to show $(\ast)$ from Lemma \ref{main-red-lem}.  We start with a solution $h$ on $I(M)$.  Working inside $M_{ab}$, we can find extensions $h^1_a, h^2_a, h_b$ of $h$ that are solutions for $f_1(M_a), f_2(M_a), M_b$ by the extension property of solutions (which holds because 2-amalgamation does).  Now, for $\ell = 1, 2$, amalgamate $h^\ell_a$ and $h_b$ over $h$ into $h^\ell_{ab}$, which is a solution for $M_{ab}$.  We use this to get a isomorphism $f_*$.

Set $f_*$ to be the identity on $I(M_{ab}) = I(M) \cup \{a, b\}$.  This determines its value on $K$, $G$, and $\mathbb{Z}_2$.  

Let $x \in G^*_u(M_{ab})$ for $u \in K(M_{ab})$.  There is a unique $\gamma \in G(M_{ab})$ such that $t_{G^*}^{M_{ab}}(u, f^1_{ab}(u), x, \gamma)$.  Then, there is a unique $y \in G^*_u(M_{ab})$ such that $t_{G^*}^{M_{ab}}(u, f^2_{ab}(u), y, \gamma)$.  Set $f_*(x) = y$.

Let $x \in H^*_u(M_{ab})$ for $u \in K(M_{ab})$.  There is a unique $n \in H(M_{ab})$ such that $t_{H^*}^{M_{ab}}(u, f^1_{ab}(u), x, n)$.  Then, there is a unique $y \in H^*_u(M_{ab})$ such that $t_{H^*}^{M_{ab}}(u, f^2_{ab}(u), y, n)$.  Set $f_*(x) = y$.

This is a bijection on the universes, and clearly preserves all structure except maybe $Q$.  So we show it preserves $Q$.  It suffices to show one direction for positive instances of $Q$.  So let $u_1, \dots, u_k, v$ be compatible from $K(M_{ab})$ and $x_j \in G^*_{u_j}(M_{ab}), y \in H^*_v(M_{ab})$ such that
$$M_{ab} \vDash Q(x_1,\dots, x_k, y)$$
Note, by definition of solutions, we have
$$M_{ab} \vDash Q\left(f_{ab}^1(u_1), \dots, f_{ab}^1(u_k), g^1_{ab}(v)\right)$$
$$M_{ab} \vDash Q\left(f_{ab}^2(u_1), \dots, f_{ab}^2(u_k), g^2_{ab}(v)\right)$$
By the properties of $Q$, we get $\gamma_j \in G(M_{ab})$ and $n \in H(M_{ab})$ such that
\begin{enumerate}
	\item $t_{G^*}^{M_{ab}}(u_j, f^1_{ab}(u_j), x_j, \gamma_j)$
	\item $t_{H^*}^{M_{ab}}(v, g^1_{ab}(v), y, n)$
	\item $\gamma_1(v) + \dots + \gamma_k(v) + n \equiv 0 \mod 2$
\end{enumerate}
Then, by definition of $f_*$, we have
\begin{enumerate}
	\item $t_{G^*}^{M_{ab}}(u_j, f^2_{ab}(u_j), f_*(x_j), \gamma_j)$
	\item $t_{H^*}^{M_{ab}}(v, g^2_{ab}(v), f_*(y), n)$
\end{enumerate}
By the evenness of these shifts, we have that
$$M_{ab} \vDash Q\left(f_*(x_1),\dots, f_*(x_k), f_*(y)\right)$$
Perfect.

The commutativity condition is easy to check.
\end{proof}

The next two lemmas show that the uniqueness triples (if they exist) must be exactly the one point extensions. This can be seen from the abstract theory \cite[III.3.5]{shelahaecbook} but we give a direct proof here.

\begin{lemma} \label{uniq-trip-cor-2}\label{uniq-trip-cor}
  Let $n \in [3, \omega)$ and let $k \le n - 3$. If $(a, M, M^+)$ is a uniqueness triple of $\s^{k, n}$, then $I(M^+) = I(M) \cup \{a \}$.
\end{lemma}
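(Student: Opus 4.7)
The plan is to argue by contradiction, producing two non-equivalent nonforking amalgams whenever $I(M^+) \supsetneq I(M) \cup \{a\}$. So pick $b \in I(M^+) \setminus (I(M) \cup \{a\})$, and use the no-maximal-models clause of Fact \ref{bk-fact} to choose $M_1 \in \K^n_{\aleph_k}$ with $M \lean M_1$ and $I(M_1) = I(M) \cup \{b_1\}$ for some $b_1 \notin I(M^+)$.

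Next I build two amalgams $(f_1^x, f_2^x, N^x)$ of $M^+$ and $M_1$ over $M$ both satisfying the nonforking condition from the definition of a uniqueness triple. For the ``collapsing'' amalgam ($x = c$), set $N^c := M^+$, $f_1^c := \id_{M^+}$, and let $f_2^c : M_1 \to M^+$ be the composition of an isomorphism $h : M_1 \cong_M M^{+,-}$ sending $b_1 \mapsto b$ (where $M^{+,-} \lean M^+$ is chosen to satisfy $I(M^{+,-}) = I(M) \cup \{b\}$) with the inclusion $M^{+,-} \lean M^+$. The existence of $h$ is the main technical point: one fixes a solution on $M$, uses extension of solutions from Fact \ref{amal-sol-fact} (valid since $\aleph_k + 1 \leq \aleph_{n-2}$) to extend to solutions on $M_1$ and on $M^{+,-}$, and then applies Fact \ref{sol-iso-corr}.(2) to the bijection $I(M_1) \to I(M^{+,-})$ that is the identity on $I(M)$ and maps $b_1 \mapsto b$, invoking the ``moreover'' clause to ensure the resulting isomorphism restricts to $\id_M$. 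For the ``disjoint'' amalgam ($x = d$), invoke disjoint amalgamation from Fact \ref{bk-fact} to obtain $(f_1^d, f_2^d, N^d)$ with $f_1^d[M^+] \cap f_2^d[M_1] = f_1^d[M]$. In both cases $f_1^x(a) = a \notin f_2^x[I(M_1)]$ (since $a \neq b$ in case $c$, and by disjointness in case $d$), so the nonforking condition holds.

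The final step, which I expect to be the crux given how tightly the two amalgams are designed, is to show that they are not equivalent over $M$. Suppose they were, witnessed by $N_* \in \K^n_{\aleph_k}$ and maps $g^c : N^c \to N_*$, $g^d : N^d \to N_*$ satisfying the commutativity conditions. Tracking $b$ through the diagram yields $g^d(f_1^d(b)) = g^c(f_1^c(b)) = g^c(b) = g^c(f_2^c(b_1)) = g^d(f_2^d(b_1))$, so injectivity of $g^d$ forces $f_1^d(b) = f_2^d(b_1)$. But this element then lies in $f_1^d[M^+] \cap f_2^d[M_1] = f_1^d[M]$ by disjointness of amalgam $d$, and injectivity of $f_2^d$ then forces $b_1 \in |M|$, contradicting the choice of $b_1$. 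This contradicts the uniqueness-triple hypothesis on $(a, M, M^+)$ and completes the proof.
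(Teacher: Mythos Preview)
Your proof is correct. Both your argument and the paper's proceed by exhibiting two non-equivalent nonforking amalgams, exploiting the extra index points in $I(M^+) \setminus (I(M) \cup \{a\})$; the underlying idea is the same, but the constructions differ.

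The paper works entirely with standard models (via Lemma~\ref{ext-imp-stand}): it writes $I(M^+) = X \cup X^+ \cup \{a\}$, takes $N$ to be the standard model on $X \cup (2 \times X^+)$, and builds two explicit embeddings $f_0, f_1 : M^+ \to_M N_\ell$ sending $X^+$ to the two copies $\{0\} \times X^+$ and $\{1\} \times X^+$. Non-equivalence is then a one-line computation inside $N$, since both copies already live there.

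Your version instead isolates a single point $b \in I(M^+) \setminus (I(M) \cup \{a\})$, builds a one-point extension $M_1$ of $M$, and contrasts a \emph{collapsing} amalgam (where $b_1$ is identified with $b$ via the solution machinery of Facts~\ref{sol-iso-corr} and~\ref{amal-sol-fact}) with a \emph{disjoint} amalgam obtained from Fact~\ref{bk-fact}. This is slightly more abstract: you invoke extension of solutions and disjoint amalgamation as black boxes rather than writing down standard models, and you only need one extra index rather than doubling all of $X^+$. The paper's approach is more self-contained and concrete; yours is a bit cleaner but leans more heavily on the cited facts. Either works.
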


Recall (Definition \ref{standard-def}) that the standard model is the one where $G^*$ is literally equal to $K \times G$, so that we can easily recover $0$'s.

\begin{proof}
  Deny.  By Lemma \ref{ext-imp-stand}, without loss of generality, we have that $M$ is the standard model on $I(M) = X$ and $M^+$ is the standard model on $I(M^+) = X \cup X^+ \cup \{a\}$ (those unions are disjoint) with $X^+$ nonempty.  Set $N$ to be the standard model on $X \cup (2 \times X^+)$ and $N_0, N_1$ to be standard models on $X \cup 2 \times X^+ \cup \{a\}$.  For $\ell = 0,1$, define $f_\ell: M^+ \to_M N_\ell$ by 
\begin{enumerate}
	\item $f_\ell$ is the identity on $X \cup \{a\}$ and sends $x \in X^+$ to $(\ell, x)$.
	\item The above determines the map on $K$, $H$, and $G$.
	\item $(u, x) \in G^*(M^+)$ goes to $(f_\ell(u), x) \in G^*(N_\ell)$.
	\item $(u, n) \in H^*(M^+)$ goes to $(f_\ell(u), n) \in H^*(N_\ell)$.
\end{enumerate}
Then this is clearly a set-up for weak uniqueness.  However, suppose there were a $N^*$ with $g_\ell:N_\ell \to_N N^*$ such that $g_0 \circ f_0 = g_1 \circ f_1$.  Let $x \in X^+$.  Then
$$(0, x) = g_0(x) = f_0(g_0(x)) = f_1(g_1(x)) = f_1(1,x) = (1,x)$$
which is false.
\end{proof}

\begin{lemma} \label{uniq-trip-char}
  Let $n \in [3, \omega)$ and let $k \le n - 3$. Let $M \lean N$ both be in $\K_{\aleph_k}^n$. If $\s^{k, n}$ is weakly successful, then $(a, M, N)$ is a uniqueness triple of $\s^{k, n}$ if and only if $I(N) = I(M) \cup \{a\}$.
\end{lemma}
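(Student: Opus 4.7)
My plan is to treat the two directions separately. The forward direction is immediate from Lemma \ref{uniq-trip-cor}: any uniqueness triple of $\s^{k,n}$ already satisfies $I(N) = I(M) \cup \{a\}$. So I focus on the backward direction, where the weak successfulness hypothesis is essential.

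Assume $I(N) = I(M) \cup \{a\}$, so $p := \gtp(a/M;N)$ is basic. By weak successfulness of $\s^{k,n}$, we may write $p = \gtp(a'/M;N')$ for some uniqueness triple $(a', M, N')$. Applying the forward direction to this triple gives $I(N') = I(M) \cup \{a'\}$. Thus both $N$ and $N'$ are one-point extensions of $M$ realizing the same Galois type over $M$.

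The remaining task is to transfer the uniqueness triple property from $(a', M, N')$ to $(a, M, N)$. My plan is to construct an isomorphism $h : N \cong_M N'$ sending $a$ to $a'$. The equality of Galois types provides a common amalgam $N^\ast$ with strong embeddings $g : N \to N^\ast$ and $g' : N' \to N^\ast$ fixing $M$ and with $g(a) = g'(a')$; the images $g[N]$ and $g'[N']$ are then substructures of $N^\ast$ with the same $I$-part. Extension of solutions is available at these cardinalities by Fact \ref{amal-sol-fact} (the relevant instance is $1 + k \le n - 2$), so by Fact \ref{sol-iso-corr} we obtain an isomorphism $g[N] \cong_M g'[N']$ that is the identity on $I$. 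Composing with $g$ and $(g')^{-1}$ yields the desired $h$. Since being a uniqueness triple is preserved under such isomorphism over $M$ fixing the distinguished element (the amalgams translate bijectively, preserving both nonforking and equivalence over $M$), we conclude $(a, M, N)$ is a uniqueness triple. The main obstacle is the construction of $h$, where the solution machinery from Section~3 does the heavy lifting; everything else is bookkeeping.
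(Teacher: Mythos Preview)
Your proposal is correct and follows essentially the same approach as the paper: both directions are handled identically in spirit, with the forward direction citing Lemma~\ref{uniq-trip-cor} and the backward direction producing an isomorphism of triples $(a,M,N)\cong (a',M,N')$ via the solution machinery, then invoking invariance of the uniqueness-triple property. The only cosmetic difference is that the paper packages the isomorphism construction as a single citation of Lemma~\ref{ext-imp-stand} (standard models), whereas you first pass to a common amalgam $N^\ast$ using the equality of Galois types and then apply Fact~\ref{sol-iso-corr} directly; the amalgam step is not strictly needed (you could build the isomorphism $N\cong_M N'$ straight from a common solution on $M$ extended to each side), but it does no harm.
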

\begin{proof}
  Lemma \ref{uniq-trip-cor} gives one direction. Conversely, let $(a, M, N)$ with $I(N) = I(M) \cup \{a\}$.  Since $\s^{k, n}$ is weakly successful, there is some uniqueness triple $(b, M', N')$ representing $\gtp (a /M; N)$. By Lemma \ref{uniq-trip-cor}, we must have $I(N') = I(M') \cup \{b\}$.  By Lemma \ref{ext-imp-stand}, we have $(M, N) \cong (M', N')$ since they are both isomorphic to the standard model.  This isomorphism must take $a$ to $b$.  Since $(a, M, N) \cong (b, M', N')$, the former is a uniqueness triple as well.
\end{proof}

We deduce that $\s^{n - 3, n}$ is not even weakly successful.

\begin{theorem}\label{negative-prop}
  For $n \in [3, \omega)$, $\s^{n - 3, n}$ is not weakly successful.
\end{theorem}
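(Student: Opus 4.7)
The plan is to assume, toward a contradiction, that $\s^{n-3, n}$ is weakly successful, apply Lemma \ref{uniq-trip-char} to conclude that every one-point $I$-extension is a uniqueness triple, and then exhibit a one-point extension admitting two inequivalent amalgams. The obstruction comes from the failure of $2$-amalgamation for solutions at size $\aleph_{n-3}$, which holds by Fact \ref{amal-sol-fact} since $2 + (n-3) = n-1 > n-2$.

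Fix an index set $X$ of size $\aleph_{n-3}$ and distinct new elements $a, b \notin X$. Let $M, M_a, M_b, N$ denote the standard models on $X$, $X \cup \{a\}$, $X \cup \{b\}$, and $X \cup \{a, b\}$ respectively, so $M \lean M_a \lean N$ and $M \lean M_b \lean N$. By Fact \ref{sol-iso-corr}, the group $\Aut(M_a / M)$ (and analogously $\Aut(M_b / M)$ and $\Aut(N / M)$) identifies with the space of ``shifts'' of the canonical solution that are trivial on $M$, and this carries a natural $\mathbb{Z}_2$-vector space structure. Under this identification, $2$-amalgamation for solutions translates to surjectivity of the restriction map $\rho : \Aut(N / M) \to \Aut(M_a / M) \times \Aut(M_b / M)$ (compatibility on $M$ is automatic). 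The failure of $\rho$ to be surjective, combined with $\mathbb{Z}_2$-linearity of its image, forces either some $(\sigma_a, \id)$ or some $(\id, \sigma_b)$ to lie outside the image (otherwise the image contains generators of the entire codomain). By symmetry, take $\sigma_a \in \Aut(M_a / M)$ such that $\sigma_a$ does \emph{not} extend to any automorphism of $N$ which is the identity on $M_b$. Since $\sigma_a$ permutes $I(M_a) \setminus I(M) = \{a\}$, it fixes $a$.

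Now consider the two amalgams $\mathcal{A}_1 := (\iota_{M_a}, \iota_{M_b}, N)$ and $\mathcal{A}_2 := (\iota_{M_a} \circ \sigma_a, \iota_{M_b}, N)$ of $M_a, M_b$ over $M$, where $\iota_{M_a}, \iota_{M_b}$ are the inclusions into $N$. Since $\sigma_a(a) = a \notin I(M_b)$, in both amalgams the type $\gtp(a/M_b; N)$ is nonalgebraic and hence nonforking over $M$ (as forking in $\s^{n-3, n}$ is just algebraicity). If $\mathcal{A}_1$ and $\mathcal{A}_2$ were equivalent over $M$, there would exist $N^* \in \K^n$ and embeddings $g^1, g^2 : N \to N^*$ with $g^1 \circ \sigma_a = g^2$ on $M_a$ and $g^1 = g^2$ on $M_b$; consequently $g^1$ and $g^2$ agree on $I(N) = X \cup \{a, b\}$ (on $X$ via $M_b \supseteq M$, on $a$ via $\sigma_a(a) = a$, on $b$ via $M_b$). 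By fullness of substructures in $\K^n$ (Fact \ref{bk-fact}(\ref{mod-comp})), the images $g^1[N]$ and $g^2[N]$ coincide as substructures of $N^*$, so $\tau := (g^1)^{-1} \circ g^2$ is a well-defined automorphism of $N$ satisfying $\tau \rest M_a = \sigma_a$ and $\tau \rest M_b = \id$, contradicting the choice of $\sigma_a$. Hence $\mathcal{A}_1, \mathcal{A}_2$ are inequivalent, so $(a, M, M_a)$ is not a uniqueness triple, contradicting Lemma \ref{uniq-trip-char}.

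The main obstacle is the second paragraph: extracting a specific non-extendable automorphism from the combinatorial failure of $2$-amalgamation of solutions. This requires both the $\mathbb{Z}_2$-linearity of the shift space (to reduce an arbitrary amalgamation failure to a pair of the form $(\sigma_a, \id)$) and Fact \ref{sol-iso-corr} (to pass cleanly between solutions and automorphisms over the index set). The diagram chase in the last paragraph is routine given that full substructures of $N^*$ are determined by their $I$-part.
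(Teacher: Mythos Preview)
Your overall strategy matches the paper's: reduce via Lemma \ref{uniq-trip-char} to showing some one-point extension $(a, M, M_a)$ fails to be a uniqueness triple, then use failure of $2$-amalgamation of solutions to produce two inequivalent nonforking amalgams. However, there are two technical gaps.

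First, you work with the standard models, but Fact \ref{amal-sol-fact} only guarantees that $2$-amalgamation fails for \emph{some} configuration, not for your particular standard square. Transferring the failure would require an isomorphism of the whole diagram $M \lean M_a, M_b \lean M_{ab}$ onto the standard one, and building such an isomorphism needs a solution on $M_{ab}$ restricting correctly to both $M_a$ and $M_b$ --- which is precisely $2$-amalgamation. (Lemma \ref{ext-imp-stand} only standardizes a single chain $M \lean N$, not a square.) The paper avoids this circularity by working directly with whatever $M, M_a, M_b, M_{ab}$ witness the failure: Lemma \ref{uniq-trip-char} applies to any such models, and the two embeddings $f_1 = \id$, $f_2$ are defined concretely from the difference between the given $h_a$ and an extension $h_{ab}$ of $h_b$.

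Second, your assertion that $g^1[N] = g^2[N]$ because ``full substructures of $N^*$ are determined by their $I$-part'' is false as stated: for $J \subsetneq I(N^*)$ there are $[G(N^*):G(J)]$ many full substructures on $J$, one for each choice of $G(J)$-orbit in each fiber $G^*_u(N^*)$. The conclusion is salvageable --- after shrinking $N^*$ so that $I(N^*) = g^1(I(N)) = g^2(I(N))$, $1$-transitivity of the $G(N^*)$-action forces $g^1[N] = g^2[N] = N^*$ --- but model-completeness (Fact \ref{bk-fact}(\ref{mod-comp})) is not the reason.
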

\begin{proof}
  Let $\lambda := \aleph_{k - 3}$. At this cardinal, 2-amalgamation of solutions over sets of size $\lambda$ fails.  To witness this, we have:
\begin{itemize}
	\item $M$ of size $\lambda$ with solution $h = (f, g)$
	\item $M_a$ has a solution $h_a = (f_a, g_a)$
	\item $M_b$ has a solution $h_b = (f_b, g_b)$
	\item $M_{ab}$ has no solution that extends them both
	\item $I(M_x) = I(M) \cup \{x\}$ for $x = a, b, ab$
\end{itemize}
However, $\lambda$ does have extension of solutions, so let $h_{ab} = (f_{ab}, g_{ab})$ be a solution for $M_{ab}$ that extends $h_b$.  $h_{ab}$ is a solution for $I(M_a)$ in $M_{ab}$.\footnote{Note that it isn't a solution in $M_a$ as $f_{ab}(u)$ might not be in $M_a$ for $u \in M_a$.}  Set $f_1:M_a \to_{M} M_{ab}$ to be the identity.  Define $f_2: M_a \to_M M_{ab}$ as follows:
\begin{itemize}
	\item identity on $I(M) \cup\{a\}$, which determines it except on the affine stuff (in the sense of Lemma \ref{not-aff-ext})
	\item Let $x \in G^*_u(M_a)$ for $u \in K(M_a)$.  Set $f_2$ to send $f_a(u)$ to $f_{ab}(u)$ and the rest falls out by the $G$ action
	\item Let $x \in H^*_u(M_a)$ for $u \in K(M_a)$.  Set $f_2$ to send $g_a(u)$ to $g_{ab}(u)$ and the rest falls out by the $G$ action.
\end{itemize}
This map commutes on $M$ because if $u \in K(M)$, then $f_{ab}(u) = f_a(u) = f(u)$ .

We claim that $\gtp(a/M; M_a)$ does not have a uniqueness triple.  Suppose it does. By Lemma \ref{uniq-trip-char}, $(a, M, M_a)$ is one.

Suppose that we had $N^*$ and $g_\ell:M_{ab} \to_{M_b} N^*$ such that $g_1 = g_2f_2$ and $g_1(a) = g_2(f_2(a))$ (recalling that $f_1$ is the identity).

{\bf Claim:} If $u \in K(M_a)$, then $g_1(G^*_u(M_{ab})) = g_2(G^*_u(M_{ab}))$.\\

There is $\gamma_u \in G(M_{ab})$ such that $f_{ab}(u) = f_{a}(u) + \gamma_u$.  Given $x \in G^*_u(M_{ab})$,
$$g_1(x) = g_2(f_2(x)) = g_2(x + \gamma_u) = g_2(x) + \gamma_u$$
Thus $g_1(G^*_u(M_{ab}))$ and $g_2(G^*_u(M_{ab}))$ are both subsets of $G^*_u(N^*)$ that have a 1-transitive action of $G(M_{ab})$ and share points.\hfill $\dag_{\text{Claim}}$\\

Now define $h^+=(f^+, g^+)$ on $M_{ab}$ by
\begin{eqnarray*} 
f^+(u) &=& g_1^{-1} \circ g_2 \circ f_{ab}(u)\\
g^+(u) &=& g_1^{-1} \circ g_2 \circ g_{ab}(u)
\end{eqnarray*}
We claim $h^+$ extends both $h_a$ and $h_b$.  If $u \in K(M_b)$, then $f_{ab}(u) = f_b(u) \in M_b$, so
$$f^+(u) = g_1^{-1}\circ g_2 \circ f_{ab}(u) = f_{ab}(u) = f_b(u)$$
since the $g_\ell$'s fix $M_b$.  Suppose $u \in K(M_a)$.  First note that $g_1^{-1} \circ g_2 = f_2^{-1}$ by assumption.  Also, since $f_2(f_a(u)) = f_{ab}(u)$ and $f_2$ respects the group action, $f_2(f_{ab}(u)) = f_a(u)$.  Thus
$$f^+(u) = g_1^{-1} \circ g_2 \circ f_{ab}(u) = f_2^{-1} \circ f_{ab}(u) = f_a(u)$$
Similarly for $g^+$.

But this is our contradiction! $h_a$ and $h_b$ were not amalgamable, so there is no isomorphism.
\end{proof}

\section{Nonforking is disjoint amalgamation}

Recall that if a good frame is weakly successful, one can define an independence relation $\NF$ for models (see Definition \ref{nf-def}). We show here that $\NF$ in the Hart-Shelah example is just disjoint amalgamation, i.e.\ $\NF (M_0, M_1, M_2, M_3)$ holds if and only if $M_0 \lean M_\ell \lean M_3$ for $\ell < 4$ and $M_1 \cap M_2 = M_0$. We deduce another proof of Theorem \ref{negative-prop}.

We will use the following weakening of \cite[4.2]{bk-hs}

\begin{fact} \label{jep-in-model-lemma-2}
  Let $n \in [2, \omega)$. If $M_0, M_1 \lean N$, then there is $M_2 \lean N$ such that $I(M_2) = I(M_0) \cup I(M_1)$ and $M_0, M_1 \lean M_2$.
\end{fact}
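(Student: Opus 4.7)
The plan is to construct $M_2$ explicitly as a substructure of $N$ with $I(M_2) = I(M_0) \cup I(M_1)$, and then invoke model-completeness (Fact \ref{bk-fact}(\ref{mod-comp})) to promote ``substructure'' to $\lean$-substructure. The combinatorial sorts are forced: set $I(M_2) := I(M_0) \cup I(M_1)$, $K(M_2) := [I(M_2)]^n$, $H(M_2) := H(N)$, and $G(M_2) := \{\gamma \in G(N) : \supp_N \gamma \subseteq K(M_2)\}$. Fullness (the support-preservation property underlying Fact \ref{bk-fact}(\ref{mod-comp})) guarantees $G(M_\ell) \subseteq G(M_2)$ and $K(M_\ell) \subseteq K(M_2)$ for $\ell = 0, 1$.

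The real choice is how to populate the affine sorts $G^*(M_2)$ and $H^*(M_2)$. For each $u \in K(M_2)$, the stalk $G^*_u(M_2)$ must be a single $G(M_2)$-orbit of some basepoint $x_u \in G^*_u(N)$, and must contain $G^*_u(M_\ell)$ whenever $u \in K(M_\ell)$; the analogous constraint applies to $H^*_u(M_2)$. For $u \in K(M_\ell) \setminus K(M_{1 - \ell})$ I would take $x_u \in G^*_u(M_\ell)$, and for $u \in K(M_2) \setminus (K(M_0) \cup K(M_1))$ I would take any $x_u \in G^*_u(N)$. Once the basepoints are chosen consistently, $M_2 \subseteq N$ is closed under all functions of the language, the $Q$-relation is inherited, and model-completeness yields $M_0, M_1 \lean M_2 \lean N$.

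The main obstacle is the overlap case $u \in K(M_0) \cap K(M_1)$, where I need a single basepoint $x_u$ lying in a common $G(M_2)$-orbit with both $G^*_u(M_0)$ and $G^*_u(M_1)$. Equivalently, for any (hence all) $x_0 \in G^*_u(M_0)$ and $x_1 \in G^*_u(M_1)$, the unique $\gamma \in G(N)$ with $x_1 = \gamma \cdot x_0$ should have $\supp_N \gamma \subseteq K(M_0) \cup K(M_1) \subseteq K(M_2)$, so that $\gamma \in G(M_2)$; the same kind of check is needed for the $H^*$-stalks. This structural compatibility is precisely the content of the stronger \cite[4.2]{bk-hs}. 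Alternatively, one can argue by induction on $|I(M_1) \setminus I(M_0)|$, adjoining one new index at a time to $M_0$ inside $N$ and invoking disjoint amalgamation (Fact \ref{bk-fact}) at each one-point extension step to select the new basepoints compatibly with $M_1$; at the end of the induction, the resulting chain stabilizes at the desired $M_2$.
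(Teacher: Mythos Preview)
The paper does not prove this statement: it is recorded as a black-box fact, quoted as a weakening of \cite[4.2]{bk-hs}. Your first approach---build $M_2$ sort by sort inside $N$ and invoke model-completeness---is the natural unpacking of that citation, and you correctly isolate the only nontrivial point: for $u \in K(M_0)\cap K(M_1)$ one must check that the $G(M_0)$-orbit $G^*_u(M_0)$ and the $G(M_1)$-orbit $G^*_u(M_1)$ lie in a single $G(M_2)$-orbit inside $G^*_u(N)$. Deferring that point back to \cite[4.2]{bk-hs} is exactly what the paper does, so your main line is aligned with it. (One small simplification: the $H^*$-overlap is automatic, since each stalk $H^*_u$ is a $\mathbb{Z}_2$-torsor with two elements and hence $H^*_u(M_0)=H^*_u(M_1)=H^*_u(N)$ already.)

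Your alternative route, however, has a genuine gap. Disjoint amalgamation (Fact~\ref{bk-fact}) produces an amalgam in $\K^n$, not an amalgam \emph{inside the fixed ambient model $N$}; it gives you no control over which $G(M_2)$-orbit of $G^*_u(N)$ you land in. And the induction on $|I(M_1)\setminus I(M_0)|$ does not sidestep the overlap problem: for any $u\in K(M_0)\cap K(M_1)$ the orbit $G^*_u(M_0)$ is already fixed at the base step, and as you enlarge toward $M_2$ the stalk $G^*_u(M_2)$ is simply the $G(M_2)$-orbit of that same basepoint. Whether this orbit eventually contains $G^*_u(M_1)$ is exactly the support condition on $\gamma$ that you left unproven in the first approach. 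So the alternative is not an independent argument; drop it and rely on the citation.
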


\begin{theorem}\label{nf-dap}
  Let $n \in [3, \omega)$ and let $k \le n - 3$. Let $\lambda := \aleph_k$ and let $M_0, M_1, M_2, M_3 \in \K_\lambda^n$ with $M_0 \lean M_\ell \lean M_3$ for $\ell < 4$. If $\s^{k, n}$ is weakly successful, then $\NF_{\s^{k, n}} (M_0, M_1, M_2, M_3)$ if and only if $M_1 \cap M_2 = M_0$.
\end{theorem}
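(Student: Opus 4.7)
My plan is as follows. The forward direction is immediate: by Fact \ref{nf-fact}, $\NF(M_0, M_1, M_2, M_3)$ implies $M_1 \cap M_2 = M_0$. For the converse, assume $M_1 \cap M_2 = M_0$. I aim to build sequences witnessing $\NF^\ast(M_0, N_1, M_2, M_3)$ for some $N_1 \gea M_1$; taking $N_0 := M_0$, $N_2 := M_2$, $N_3 := M_3$ in Definition \ref{nf-def}(2) then yields $\NF(M_0, M_1, M_2, M_3)$.

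The key set-theoretic fact driving the construction is that since $I(M_1) \cap I(M_2) \subseteq I(M_0)$ by hypothesis, we have $I(M_1) \setminus I(M_0) \subseteq I(M_3) \setminus I(M_2)$. I would enumerate $I(M_3) \setminus I(M_2) = \{d_i : i < \alpha^\ast\}$ without repetition. By Lemma \ref{ext-imp-stand}, applied to $M_0 \lean M_3$, I may assume all of $M_0, M_1, M_2, M_3$ are standard (substructures of a standard model with prescribed $I$-part are themselves standard). For each $i \leq \alpha^\ast$, define $N_{1, i}$ and $N_{2, i}$ to be the standard models with $I$-parts $I(M_0) \cup \{d_j : j < i\}$ and $I(M_2) \cup \{d_j : j < i\}$ respectively, viewed as strong submodels of $M_3$.

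I would then check each clause of Definition \ref{nf-def}(1). The base cases give $N_{1, 0} = M_0$ and $N_{2, 0} = M_2$; the top of the right chain is $I(N_{2, \alpha^\ast}) = I(M_2) \cup (I(M_3) \setminus I(M_2)) = I(M_3)$, so $N_{2, \alpha^\ast} = M_3$; and the top of the left chain satisfies $I(N_{1, \alpha^\ast}) = I(M_0) \cup (I(M_3) \setminus I(M_2)) \supseteq I(M_1)$, so $N_1 := N_{1, \alpha^\ast}$ is a strong extension of $M_1$. The inclusion $N_{1, i} \lean N_{2, i}$ follows from $I(M_0) \subseteq I(M_2)$. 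Each $(d_i, N_{1, i}, N_{1, i+1})$ is a one-point extension in $I$, hence a uniqueness triple by Lemma \ref{uniq-trip-char} (using that $\s^{k, n}$ is weakly successful by hypothesis). Finally, nonforking of $\gtp(d_i / N_{2, i}; N_{2, i+1})$ over $N_{1, i}$ reduces, via Definition \ref{simple-frame-def}, to the condition $d_i \in I(N_{2, i+1}) \setminus I(N_{2, i})$, which holds because the enumeration is injective and $d_i \notin I(M_2)$.

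The main subtlety I expect to face is that when $I(M_3) \supsetneq I(M_1) \cup I(M_2)$, the left-hand chain is forced to pass beyond $M_1$, so $N_1$ is genuinely larger than $M_1$. The clean resolution is to enumerate the larger set $I(M_3) \setminus I(M_2)$ rather than just $I(M_1) \setminus I(M_0)$, synchronizing the top of the right-hand chain with $M_3$, while Definition \ref{nf-def}(2) absorbs the slack in $N_1$ via the inequality $M_1 \lean N_1$.
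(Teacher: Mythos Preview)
Your overall strategy is the same as the paper's: build increasing continuous chains of one-point $I$-extensions to witness $\NF^\ast$, invoking Lemma~\ref{uniq-trip-char} for the uniqueness-triple condition and the definition of forking in $\s^{k,n}$ for the nonforking condition. The forward direction via Fact~\ref{nf-fact} is also exactly right.

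However, there is a genuine gap in your reduction to standard models. Lemma~\ref{ext-imp-stand} only lets you assume a given \emph{pair} $M_0 \lean M_3$ is standard; it says nothing about intermediate models. Your parenthetical justification---that substructures of a standard model with prescribed $I$-part are themselves standard---is false. If $M_3$ is standard and $M_2 \lean M_3$ has $I$-part $I_2$, then for each $u \in K_2 := [I_2]^n$ the fiber $G^*_u(M_2)$ is an arbitrary $G_{K_2}$-coset inside $\{u\} \times G_{K_3}$, not necessarily $\{u\} \times G_{K_2}$. In particular there are many submodels of $M_3$ with a given $I$-part, and $M_1, M_2$ need not coincide with the standard ones. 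This breaks two of your endpoint identifications: you need $N_{2,0} = M_2$ (or at least $M_2 \lean N_{2,0}$, which forces equality since $I(N_{2,0}) = I(M_2)$), and you need $M_1 \lean N_1$. Neither follows once $M_1, M_2$ are allowed to be non-standard.

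The paper avoids this by not passing to standard models at all. It enumerates $I(M_1) \setminus I(M_0)$ rather than $I(M_3) \setminus I(M_2)$, builds the left chain $\seq{M_{1,i}}$ \emph{inside $M_1$} (so that $M_{1,\alpha^\ast} = M_1$ on the nose), and constructs the right chain inside $M_3$ starting from $M_{2,0} = M_2$ using Fact~\ref{jep-in-model-lemma-2} at successor stages to find $M_{2,i+1} \lean M_3$ with $I(M_{2,i+1}) = I(M_{1,i+1}) \cup I(M_{2,i})$ and $M_{1,i+1}, M_{2,i} \lean M_{2,i+1}$. This yields $\NF^\ast(M_0, M_1, M_2, M_3')$ for $M_3' := M_{2,\alpha^\ast} \lean M_3$, and then monotonicity of $\NF$ (Fact~\ref{nf-fact}) gives $\NF(M_0, M_1, M_2, M_3)$. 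Your enumeration of the larger set $I(M_3) \setminus I(M_2)$ could also be made to work, but you would then need to thread $M_1$ and $M_2$ explicitly into the inductive construction (again via Fact~\ref{jep-in-model-lemma-2}) rather than relying on standardness.
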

\begin{proof}
  Write $\NF$ for $\NF_{\s^{k, n}}$. The left to right direction follows from the properties of $\NF$ (Fact \ref{nf-fact}). Now assume that $M_1 \cap M_2 = M_0$. 

  Write $I(M_1) - I(M_0) = \{d_i \mid i < \alpha^\ast\}$.  By induction, build increasing, continuous $M_{1, i}  \lean M_1$ for $i < \alpha^\ast$ so $I(M_{1, i}) = I(M_0) \cup \{d_j \mid j < i\}$.  Again by induction, build increasing continuous $M_{2, i} \lean M_3$ for $i \le \alpha^\ast$ such that
\begin{itemize}
	\item $I(M_{2, i}) = I(M_2) \cup \{d_j \mid j < i\}$
	\item $M_{1, i} \lean M_{2, i}$
\end{itemize}
The successor stage of this construction is possible by Fact \ref{jep-in-model-lemma-2} and the limit is easy.  Now it's easy to see that $\gtp(d_i/M_{2, i}; M_{2, i+1})$ does not fork over $M_{1, i}$. Furthermore by Lemma \ref{uniq-trip-char}, $(d_i, M_{1, i}, M_{1, i + 1})$ is a uniqueness triple. Thus letting $M_3' := M_{2, \alpha^\ast}$,  we have that $\NF^\ast (M_0, M_1, M_2, M_3')$, so $\NF (M_0, M_1, M_2, M_3')$. By the monotonicity property of $\NF$, $\NF (M_0, M_1, M_2, M_3)$ also holds.
\end{proof}

We deduce another proof of Theorem \ref{negative-prop}. First we show that weakly successful implies successful in the context of Hart-Shelah:

\begin{lemma}\label{weakly-succ-lem}
  Let $n \in [3, \omega)$ and let $k \le n - 3$. If $\s^{k, n}$ is weakly successful, then $\s$ is successful (recall Definition \ref{succ-def}). Moreover for $M_0, M_1 \in \K_{\lambda^+}^n$, $M_0 \leanfn M_1$  if and only if $M_0 \lean M_1$.
\end{lemma}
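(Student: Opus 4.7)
The plan is to prove the ``moreover'' first (characterizing $\leanfn$ as plain $\lean$ on saturated $\K_{\lambda^+}^n$-models), then read off successfulness cheaply from it. Write $\lambda := \aleph_k$ and $\NF := \NF_{\s^{k,n}}$.

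The forward implication of the moreover is nearly definitional. If $M_0 \leanfn M_1$ is witnessed by $\NFhat(N_0, N_1, M_0, M_1)$ via resolutions $\langle N_{0,\alpha} : \alpha < \lambda^+\rangle$ and $\langle N_{1,\alpha} : \alpha \le \lambda^+\rangle$, then monotonicity of $\NF$ (Fact \ref{nf-fact}) forces $N_{0,\alpha} \lean N_{1,\alpha}$ for each $\alpha$, and passing to the directed union at $\lambda^+$ yields $M_0 \lean M_1$ by continuity of $\lean$.

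The reverse implication is the heart of the argument. Given $M_0 \lean M_1$ in $\K_{\lambda^+}^n$, I would build by simultaneous induction on $\alpha < \lambda^+$ a continuous increasing resolution $\langle N_{1,\alpha} : \alpha \le \lambda^+\rangle$ of $M_1$ in $\K_\lambda^n$ and set $N_{0,\alpha} := M_0 \cap N_{1,\alpha}$. At each successor step, $N_{1,\alpha+1}$ is chosen inside $M_1$ so as to catch up with a fixed enumeration of both $|M_0|$ and $|M_1|$; this guarantees $\bigcup_{\alpha} N_{0,\alpha} = M_0$ and $\bigcup_{\alpha} N_{1,\alpha} = M_1$. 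Model-completeness of $\K^n$ (Fact \ref{bk-fact}(\ref{mod-comp})), together with the fact that the intersection of two full substructures is full, ensures that $N_{0,\alpha} \lean M_0$ and $N_{0,\alpha} \lean N_{1,\alpha}$, and that $\langle N_{0,\alpha}\rangle$ is continuous (intersection commutes with directed unions at limits). The crucial disjointness identity is then automatic:
\[
N_{1,\alpha} \cap N_{0,\alpha+1} \;=\; N_{1,\alpha} \cap M_0 \cap N_{1,\alpha+1} \;=\; M_0 \cap N_{1,\alpha} \;=\; N_{0,\alpha}.
\]
By Theorem \ref{nf-dap} (which uses the weak successfulness hypothesis), this gives $\NF(N_{0,\alpha}, N_{1,\alpha}, N_{0,\alpha+1}, N_{1,\alpha+1})$ for every $\alpha < \lambda^+$, so $\NFhat(N_{0,0}, N_{1,0}, M_0, M_1)$ holds, witnessing $M_0 \leanfn M_1$.

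With the moreover established, successfulness is immediate. Suppose $\delta < \lambda^{++}$ is limit, $\langle M_i : i \le \delta\rangle$ is a $\leanfn$-increasing continuous chain of saturated models in $\K_{\lambda^+}^n$, and $N \in \K_{\lambda^+}^n$ is saturated with $M_i \leanfn N$ for every $i < \delta$. The moreover gives $M_i \lean N$ for each $i < \delta$, so the AEC chain axioms applied to $\K^n$ yield $M_\delta = \bigcup_{i<\delta} M_i \lean N$, and one final application of the moreover delivers $M_\delta \leanfn N$. The main obstacle, such as it is, lies in the simultaneous-resolution construction for the reverse direction of the moreover; everything else is a formal unwinding of the definitions combined with Theorem \ref{nf-dap} and the closure properties of $\K^n$.
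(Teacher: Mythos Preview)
Your overall strategy matches the paper's: the one-line proof there (``straightforward from Definition~\ref{succ-def} and Theorem~\ref{nf-dap}'') is precisely the plan you execute---once $\NF$ is identified with disjoint amalgamation, $\leanfn$ collapses to $\lean$, and smoothness is then just the AEC axioms. Your deduction of successfulness from the ``moreover'' and your forward implication are both fine.

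There is, however, a genuine soft spot in your reverse implication. You assert that ``the intersection of two full substructures is full,'' and use this to conclude that $N_{0,\alpha} := M_0 \cap N_{1,\alpha}$ is a model in $\K^n$. This is not true in general for $\K^n$. The problem is the affine sorts: given $u \in K(M_0) \cap K(N_{1,\alpha})$, the stalks $G^\ast_u(M_0)$ and $G^\ast_u(N_{1,\alpha})$ are each a single $G$-orbit inside $G^\ast_u(M_1)$, but they are \emph{cosets} of different subgroups ($G(M_0)$ and $G(N_{1,\alpha})$) and can be disjoint. Concretely, if $x \in G^\ast_u(M_0)$ and $y \in G^\ast_u(N_{1,\alpha})$ differ by some $\gamma \in G(M_1)$ whose support meets $K(M_1) \setminus \bigl(K(M_0) \cup K(N_{1,\alpha})\bigr)$, then $G^\ast_u(M_0) \cap G^\ast_u(N_{1,\alpha}) = \emptyset$ and the bare intersection fails to be a model.

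The fix is routine and does not disturb your architecture: when building $N_{1,\alpha+1}$, interleave a L\"owenheim--Skolem closure that, for each $u \in K(M_0)$ appearing in the current approximation, throws in a witness from $G^\ast_u(M_0)$ (and similarly for $H^\ast$); iterating $\omega$ times inside $M_1$ gives an $N_{1,\alpha+1}$ of size $\lambda$ with $M_0 \cap N_{1,\alpha+1} \in \K^n_\lambda$. Alternatively, build the two resolutions in parallel and verify $N_{1,\alpha} \cap M_0 = N_{0,\alpha}$ directly by the same back-and-forth. Either way, your disjointness computation $N_{1,\alpha} \cap N_{0,\alpha+1} = N_{0,\alpha}$ and the appeal to Theorem~\ref{nf-dap} then go through exactly as you wrote.
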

\begin{proof}
  This is straightforward from Definition \ref{succ-def} and Theorem \ref{nf-dap}.
\end{proof}

\begin{cor}\label{negative-prop-2}
  For $n \in [3, \omega)$, $\s^{n - 3, n}$ is not weakly successful.
\end{cor}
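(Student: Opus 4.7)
The plan is to give a short indirect proof by leveraging the infrastructure developed in this section, rather than the explicit amalgamation-of-solutions argument used in Theorem \ref{negative-prop}. The key insight is that Lemma \ref{weakly-succ-lem} collapses ``weakly successful'' to ``successful'' for the frames $\s^{k, n}$, so once we have this lemma, non-weakly-successful follows immediately from non-successful.

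Concretely, I would argue by contradiction. Suppose $\s^{n-3, n}$ is weakly successful. By Lemma \ref{weakly-succ-lem} (applied with $k = n-3$), $\s^{n-3, n}$ is then successful. But this directly contradicts Corollary \ref{not-succ-prop}, which established that $\s^{n-3, n}$ is \emph{not} successful (by pushing through Fact \ref{succ-fact} to obtain a good $\aleph_{n-2}$-frame, whose stability requirement would contradict the instability of $\K^n$ at $\aleph_{n-2}$ recorded in Fact \ref{bk-fact}).

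There is essentially no obstacle here: the hard work has already been done. Theorem \ref{nf-dap} identified $\NF_{\s^{n-3,n}}$ with disjoint amalgamation assuming weak successfulness, and Lemma \ref{weakly-succ-lem} used this identification to show that the ordering $\leanfn$ coincides with $\lean$ on $\K_{\lambda^+}^n$, which in turn yields smoothness of $\leanfn$ for free and hence successfulness. The only thing to do for the corollary is to cite these two results together with Corollary \ref{not-succ-prop}. The proof should therefore be only a couple of lines, and its role in the paper is to illustrate that the ``nonforking = disjoint amalgamation'' perspective developed in Theorems \ref{nf-dap} and Lemma \ref{weakly-succ-lem} is strong enough to recover the negative result of Theorem \ref{negative-prop} without analyzing amalgamation of solutions in the style of the earlier proof.
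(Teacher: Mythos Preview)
Your proposal is correct and matches the paper's proof essentially line for line: assume weakly successful, apply Lemma \ref{weakly-succ-lem} to get successful, and contradict Corollary \ref{not-succ-prop}. Your additional commentary about the role of Theorem \ref{nf-dap} and the comparison with Theorem \ref{negative-prop} is accurate and captures the intended purpose of this corollary in the paper.
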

\begin{proof}
  Assume for a contradiction that $\s^{n - 3, n}$ is weakly successful. By Lemma \ref{weakly-succ-lem}, $\s^{n - 3, n}$ is successful. This contradicts Corollary \ref{not-succ-prop}. 
\end{proof}

\section{A type-full good frame at $\aleph_0$}\label{good-frame-sec}

We have seen that when $k < n -3$, $\s^{k, n}$ is successful $\goodp$ and therefore by Fact \ref{type-full-succ} extends to a type-full frame. When $k = n - 3$, $\s^{k, n}$ is not successful, but by Theorem \ref{abstract-positive}, it still extends to a type-full frame if $k \ge 1$. In this section, we complete the picture by building a type-full frame when $k = 0$ and $n = 3$.

Recall that (when $n \ge 3$) $\K^n$ is a class of models of an $\Ll_{\omega_1, \omega}$ sentence, categorical in $\aleph_0$ and $\aleph_1$. Therefore by \cite[II.3.4]{shelahaecbook} (a generalization of earlier results in \cite{sh48, sh87a}), there will be a good $\aleph_0$-frame on $\K^n$ \emph{provided} that $2^{\aleph_0} < 2^{\aleph_1}$. Therefore the result we want is at least consistent with ZFC, but we want to use the additional structure of the Hart-Shelah example to remove the cardinal arithmetic hypothesis.

So we take here a different approach than Shelah's, giving new cases on when an AEC has a good $\aleph_0$-frame. As opposed to Shelah, we use Ehrenfeucht-Mostowski models (so assume that the AEC has arbitrarily large models).

Shelah has defined the following property \cite[1.3(2)]{shelahaecbook}\footnote{Shelah defines saturative as a property of frames, but it depends only on the class.}:

\begin{defin}\label{saturative-def}
  $\K$ is \emph{$\lambda$-saturative} (or \emph{saturative in $\lambda$}) if for any $M_0 \lea M_1 \lea M_2$ all in $\K_\lambda$, if $M_1$ is limit over $M_0$, then $M_2$ is limit over $M_0$.
\end{defin}

An immediate consequence of Theorem \ref{universal-char-claim} is:

\begin{cor} \label{limit-char-claim}
  Let $n \in [3, \omega)$. For any $k \le n - 3$, $\K^n$ is saturative in $\aleph_k$.
\end{cor}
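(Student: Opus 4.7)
The plan is to reduce everything to Theorem \ref{universal-char-claim}, which characterizes ``universal over'' (and equivalently ``limit over'') in $\K^n_{\aleph_k}$ purely in terms of the size of the added index set $I$. Since both limit and universal are captured by the single combinatorial condition $|I(M_1)-I(M_0)|=\|M_1\|$, saturativity should reduce to a trivial monotonicity observation on $I$.

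Concretely, I would assume $M_0 \lean M_1 \lean M_2$ are all in $\K^n_{\aleph_k}$ with $M_1$ limit over $M_0$. By Theorem \ref{universal-char-claim}, $M_1$ being limit over $M_0$ is equivalent to $|I(M_1)-I(M_0)| = \|M_1\| = \aleph_k$. Since $M_1 \lean M_2$ we have $I(M_1) \subseteq I(M_2)$, so
\[
\aleph_k = |I(M_1) - I(M_0)| \le |I(M_2) - I(M_0)| \le \|M_2\| = \aleph_k,
\]
forcing $|I(M_2)-I(M_0)| = \|M_2\|$. Applying Theorem \ref{universal-char-claim} in the other direction, $M_2$ is universal over $M_0$, and hence (again by that theorem) limit over $M_0$, as required.

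There is no real obstacle: once Theorem \ref{universal-char-claim} is in hand, saturativity is a one-line consequence of the monotonicity of $I$ along $\lean$-extensions. The only thing to be careful about is that the theorem genuinely identifies ``limit'' with ``universal'' at every cardinal $\aleph_k$ with $k \le n-3$, so the conclusion ``$M_2$ universal over $M_0$'' immediately delivers the desired ``$M_2$ limit over $M_0$'' without any extra chain construction.
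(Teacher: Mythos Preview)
Your proof is correct and is exactly the argument the paper has in mind: the paper simply states that the corollary is an immediate consequence of Theorem \ref{universal-char-claim}, and you have spelled out that immediate consequence. There is nothing to add.
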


We will use the following consequence of being saturative:

\begin{lemma}\label{saturative-chain}
  Assume that $\LS (\K) = \aleph_0$, and $\K_{\aleph_0}$ has amalgamation, no maximal models, and is stable in $\aleph_0$. Let $\seq{M_i : i \le \omega}$ be an increasing continuous chain in $\K_{\aleph_0}$. If $\K$ is categorical in $\aleph_0$ and saturative in $\aleph_0$, then there exists an increasing continuous chain $\seq{N_i : i \le \omega}$ such that:

  \begin{enumerate}
  \item\label{cond-1} For $i < \omega$, $M_i$ is limit over $N_i$.
  \item\label{cond-2} For $i < \omega$, $N_{i + 1}$ is limit over $N_i$.
  \item $N_\omega = M_\omega$.
  \end{enumerate}
\end{lemma}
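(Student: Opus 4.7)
We proceed by induction, maintaining the invariant that $N_i \le M_i$ and $M_i$ is limit over $N_i$. Three facts drive the argument: (i) by categoricity in $\aleph_0$ together with stability, amalgamation, and no maximal models, every $M \in \K_{\aleph_0}$ is a limit model; (ii) saturativity propagates ``limit over $N_i$'' from $M_i$ to $M_{i+1}$; and (iii) since every countable limit ordinal has cofinality $\omega$, $(\aleph_0,\omega)$-limit models over a fixed base are unique up to isomorphism over that base, and the resulting limit is model-homogeneous over that base. Before starting, fix an enumeration $|M_\omega| = \{c_j : j < \omega\}$ with $c_j \in M_j$ for each $j$ (by interleaving enumerations of the countable $M_i$'s); this will force $N_\omega = M_\omega$ at the end.

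For the base step, pick $N_0 \le M_0$ with $M_0$ limit over $N_0$, which exists by (i) applied to $M_0$. For the inductive step, assume $N_i \le M_i$ with $M_i$ limit over $N_i$. By (ii) applied to $N_i \le M_i \le M_{i+1}$, $M_{i+1}$ is limit over $N_i$. Build inside $M_{i+1}$ a continuous increasing chain $N_i = K_0 \le K_1 \le \cdots \le K_\omega \le \cdots \le K_{\omega \cdot 2} = M_{i+1}$ with each $K_{\alpha+1}$ universal over $K_\alpha$, arranging further that $c_i \in K_1$ (choose $K_1 \le M_{i+1}$ to contain $c_i$ together with some universal extension of $N_i$ inside $M_{i+1}$) and that an enumeration of $|M_{i+1}|$ is absorbed in the stages $K_{\omega + n + 1}$, forcing $K_{\omega \cdot 2} = M_{i+1}$. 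Set $N_{i+1} := K_\omega$: then $N_{i+1}$ is a $(\aleph_0, \omega)$-limit over $N_i$, $M_{i+1}$ is a $(\aleph_0, \omega)$-limit over $N_{i+1}$ via the tail $\seq{K_{\omega + n} : n \le \omega}$, and $c_i \in K_1 \le N_{i+1}$. The top is $N_\omega := \bigcup_{i<\omega} N_i$, which is $\le M_\omega$ and contains every $c_j$, so $N_\omega = M_\omega$.

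The main obstacle is showing that the resolution can be carried out entirely inside $M_{i+1}$, i.e.\ that $M_{i+1}$ remains a $(\aleph_0,\omega)$-limit over every $K_\alpha$ constructed so far (so that further universal extensions of $K_\alpha$ fit inside $M_{i+1}$, and $K_\omega$ itself admits a $(\aleph_0,\omega)$-limit extension inside $M_{i+1}$). This follows from (iii): any universal extension $K$ of $N_i$ inside $M_{i+1}$ is isomorphic over $N_i$ to the first factor $L_1$ of a preferred length-$\omega \cdot 2$ resolution of $M_{i+1}$ over $N_i$, and by model-homogeneity of $M_{i+1}$ over $N_i$ this isomorphism extends to an automorphism of $M_{i+1}$ fixing $N_i$; along it, $M_{i+1}$ being limit over $L_1$ transfers to $M_{i+1}$ being limit over $K$. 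The same trick sustains the invariant at each successor and limit stage of the construction.
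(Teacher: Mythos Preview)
Your overall architecture matches the paper's: enumerate $M_\omega$, build the $N_i$ by induction, use saturativity at each step to get $M_{i+1}$ limit over $N_i$, and absorb enumerated elements so that $\bigcup_i N_i = M_\omega$. The divergence---and the gap---is in how you produce $N_{i+1}$ inside $M_{i+1}$.

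Your justification rests on the claim that $M_{i+1}$ is model-homogeneous over $N_i$: that any isomorphism $K \cong_{N_i} L_1$ between universal extensions of $N_i$ sitting inside $M_{i+1}$ extends to an automorphism of $M_{i+1}$. This is not proved, and it is in fact false under the stated hypotheses. The Hart-Shelah class $\K^n$ itself (for $n \ge 3$) satisfies every hypothesis of the lemma at $\aleph_0$, yet model-homogeneity fails there: take $K \lea M_{i+1}$ with $I(K) = I(M_{i+1}) \setminus \{c\}$ for a single $c \in I(M_{i+1}) \setminus I(N_i)$. Then $K$ is universal (hence, by saturativity, limit) over $N_i$, so $K \cong_{N_i} L_1$; but no automorphism of $M_{i+1}$ over $N_i$ can carry $K$ to $L_1$, since $|I(M_{i+1}) \setminus I(K)| = 1$ while $|I(M_{i+1}) \setminus I(L_1)| = \aleph_0$. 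So your mechanism for sustaining the invariant ``$M_{i+1}$ is limit over $K_\alpha$'' is unsound, and with only an $\omega \cdot 2$-resolution you have no room to recover once a badly chosen $K_\alpha$ eats up all but finitely much of $M_{i+1}$.

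The paper sidesteps this entirely by taking a resolution of length $\omega \cdot \omega$ rather than $\omega \cdot 2$. One fixes a single witnessing chain $\seq{M_{i+1,j} : j \le \omega \cdot \omega}$ for $M_{i+1}$ being $(\aleph_0, \omega\cdot\omega)$-limit over $N_i$, finds $j < \omega \cdot \omega$ with the finitely many required elements already in $M_{i+1,j}$, and sets $N_{i+1} := M_{i+1, j + \omega}$. Then $N_{i+1}$ is limit over $N_i$ (the initial segment has limit length $j + \omega$), and $M_{i+1}$ is limit over $N_{i+1}$ (the tail from $j + \omega$ to $\omega \cdot \omega$ still has limit length). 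No custom chain, no homogeneity, no rebuilding---$N_{i+1}$ is read directly off one fixed resolution.
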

\begin{proof}
  Let $\{a_n : n < \omega\}$ be an enumeration of $|M_\omega|$. We will build $\seq{N_i : i \le \omega}$ satisfying (\ref{cond-1}) and (\ref{cond-2}) above and in addition that for each $i < \omega$, $\{a_n : n < i\} \cap |M_{i}| \subseteq |N_{i}|$. Clearly, this is enough.

  This is possible. By categoricity in $\aleph_0$, any model of size $\aleph_0$ is limit, so pick any $N_0 \in \K_{\aleph_0}$ such that $M_0$ is limit over $N_0$. Now assume inductively that $N_i$ has been defined for $i < \omega$. Since $\K$ is saturative in $\aleph_0$, $M_{i + 1}$ is limit over $N_i$. Since all limit models of the same cofinality are isomorphic, $M_{i + 1}$ is in particular $(\aleph_0, \omega \cdot \omega)$-limit over $N_i$. Fix an increasing continuous sequence $\seq{M_{i + 1, j} : j  \le \omega \cdot \omega}$ witnessing it: $M_{i + 1, 0} = N_i$, $M_{i + 1, \omega \cdot \omega} = M_{i + 1}$, and $M_{i + 1, j + 1}$ is universal over $M_{i + 1, j}$ for all $j < \omega \cdot \omega$. Now pick $j < \omega \cdot \omega$ big enough so that $\{a_n : n < i + 1\} \cap |M_{i + 1}| \subseteq |M_{i + 1, j}|$. Let $N_{i + 1} := M_{i + 1, j + \omega}$.
\end{proof}
\begin{remark}
  We do not know how to replace $\aleph_0$ by an uncountable cardinal in the argument above: it is not clear what to do at limit steps.
\end{remark}

To build the good frame, we will also use the transitivity property of splitting:

\begin{defin}
  We say that $\K$ satisfies \emph{transitivity in $\mu$} (or \emph{$\mu$-transitivity}) if whenever $M_0, M_1, M_2 \in \K_{\mu}$, $M_1$ is limit over $M_0$ and $M_2$ is limit over $M_1$, if $p \in \gS (M_2)$ does not $\mu$-split over $M_1$ and $p \rest M_1$ does not $\mu$-split over $M_0$, we have that $p$ does not $\mu$-split over $M_0$.
\end{defin}

The following result of Shelah \cite[7.5]{sh394} is key:

\begin{fact}\label{trans-from-solvability}
  Let $\mu \ge \LS (\K)$. Assume that $\K_\mu$ has amalgamation and no maximal models. If $\K$ has arbitrarily large models and is categorical in $\mu^+$, then $\K$ has transitivity in $\mu$.
\end{fact}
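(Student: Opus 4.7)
The plan is to reduce $\mu$-transitivity of splitting to uniqueness of limit models in $\mu$, which I expect to extract from categoricity in $\mu^+$. First I would show that any two limit models in $\K_\mu$ over a common base are isomorphic over that base. The standard route under the given hypotheses is: categoricity in $\mu^+$ together with amalgamation and no maximal models in $\mu$ forces stability in $\mu$ (via a Shelah--Villaveces-style argument as in Fact \ref{shvi}); then the union of an increasing $\omega$-chain of $(\mu,\omega)$-limit models is saturated of size $\mu^+$, which by categoricity in $\mu^+$ is unique up to isomorphism, and a back-and-forth on resolutions pulls the uniqueness back down to the limit pieces in $\mu$.

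Granting uniqueness of limit models, suppose for contradiction that $M_0 \lea M_1 \lea M_2$ are as in the statement, $p \in \gS(M_2)$ satisfies both non-splitting hypotheses, yet $p$ $\mu$-splits over $M_0$. Then there are $N_1, N_2 \in \K_\mu$ with $M_0 \lea N_\ell \lea M_2$ and an isomorphism $h \colon N_1 \cong_{M_0} N_2$ such that $h(p \rest N_1) \neq p \rest N_2$. Using amalgamation and no maximal models in $\mu$, I would embed each $N_\ell$ into some $M_1^\ell \lea M_2$ with $M_1^\ell$ limit over $M_0$; by uniqueness of limit models, choose isomorphisms $g_\ell \colon M_1^\ell \cong_{M_0} M_1$. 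The non-splitting of $p$ over $M_1$ transports along $g_\ell$ to say that $p \rest M_1^\ell$ does not $\mu$-split over (a copy of) $M_0$ once one identifies $M_1^\ell$ with $M_1$; combined with non-splitting of $p \rest M_1$ over $M_0$, and by comparing the induced types on $N_1$ and $N_2$ through $g_2 \circ h \circ g_1^{-1}$, one should conclude $h(p \rest N_1) = p \rest N_2$, contradicting the choice of $h$.

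The main obstacle is the combinatorial bookkeeping that makes this comparison legitimate: the maps $g_1$, $g_2$, and $h$ must be arranged to commute (at least up to an automorphism of $M_1$ over $M_0$ that preserves non-splitting), which requires building $M_1^1$ and $M_1^2$ so that the back-and-forth witnessing uniqueness of limit models can be chosen extending $h$ on $N_1 \to N_2$. This typically demands refining the chains resolving $M_1^\ell$ and alternating between universality steps and absorbing $h$; the subtlety is that $h$ is only given over $M_0$, not over a larger limit chunk, so one must interleave the back-and-forth carefully. Once that compatibility is in place, everything else is formal manipulation of non-splitting and invariance of types under isomorphism.
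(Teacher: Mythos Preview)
The paper does not prove this statement; it quotes it as a black box from Shelah \cite[7.5]{sh394}. Shelah's argument there does not go through uniqueness of limit models in $\mu$: it uses the Ehrenfeucht--Mostowski machinery together with categoricity in $\mu^{+}$ to show that the (unique) model of size $\mu^{+}$ is $\mu^{+}$-saturated and hence model-homogeneous over submodels of size $\mu$, and then exploits the resulting supply of automorphisms directly to verify non-splitting over $M_0$.

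Your route is genuinely different, and the obstacle you flag is not cosmetic bookkeeping---it is the heart of the matter, and your sketch does not close it. The tension is this: non-splitting of $p$ is known only over $M_1$, so it controls only isomorphisms that fix $M_1$; non-splitting of $p\rest M_1$ is over $M_0$, but only constrains types restricted to $M_1$. Your maps $g_\ell$ (coming from uniqueness of limit models over $M_0$) fix only $M_0$, and your witness $h$ fixes only $M_0$. To derive $h(p\rest N_1)=p\rest N_2$ you would need an isomorphism that simultaneously extends $h$ \emph{and} fixes $M_1$ pointwise; but since $h$ need not be the identity on $N_1\cap M_1$, no such map exists in general. ``Interleaving the back-and-forth'' cannot manufacture compatibility that is not there. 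One can try instead to compare $p$ with its weak non-$M_0$-splitting extension $q$ of $p\rest M_1$ (Fact~\ref{ns-facts}) and argue $p=q$, but weak uniqueness requires agreement on a model universal over the base, and $p,q$ are only known to agree on $M_1$ itself---so that approach also stalls without extra input. In short, uniqueness of limit models by itself does not obviously yield transitivity; the missing ingredient is precisely the global homogeneity of the $\mu^{+}$-model that Shelah's EM argument supplies.

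A secondary point: your derivation of uniqueness of limit models is also thin. The claim that a union of an $\omega$-chain of $(\mu,\omega)$-limits is saturated, and that a back-and-forth then ``pulls uniqueness down,'' hides substantial work (essentially VanDieren's symmetry argument, Fact~\ref{sym-categ}, together with the reduced-tower machinery). That gap is fillable from the hypotheses, but you should cite or reproduce that argument rather than assert it.
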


We will also use two lemmas on splitting isolated by VanDieren \cite[I.4.10, I.4.12]{vandierennomax}.

\begin{fact}\label{ns-facts}
  Let $\mu \ge \LS (\K)$. Assume that $\K_\mu$ has amalgamation, no maximal models, and is stable in $\mu$. Let $M_0 \lea M \lea N$ all be in $\K_\mu$ such that $M$ is universal over $M_0$.

  \begin{enumerate}
  \item Weak extension: If $p \in \gS (M)$ does not $\mu$-split over $M_0$, then there exists $q \in \gS (N)$ extending $p$ and not $\mu$-splitting over $M_0$. Moreover $q$ is algebraic if and only if $p$ is algebraic.
  \item Weak uniqueness: If $p, q \in \gS (N)$ do not $\mu$-split over $M_0$ and $p \rest M = q \rest M$, then $p = q$.
  \end{enumerate}
\end{fact}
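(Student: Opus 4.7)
The key device in both parts is the universality of $M$ over $M_0$: since $N \in \K_\mu$ contains $M_0$, there is a $\lea$-embedding $f : N \to_{M_0} M$. Then $f[N] \lea M \lea N$ and $f$ is an isomorphism $N \cong_{M_0} f[N]$, with both $N$ and $f[N]$ being $\mu$-sized $\lea$-submodels of $N$ containing $M_0$. Thus the non-$\mu$-splitting hypothesis on a type over $N$ can be applied to the pair $(N, f[N])$ via $f$.

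For weak uniqueness I would apply this observation to both $p$ and $q$: non-splitting of $p$ (resp.\ $q$) over $M_0$ yields $f(p) = p \rest f[N]$ (resp.\ $f(q) = q \rest f[N]$). Because $f[N] \lea M$ and $p \rest M = q \rest M$, the restrictions $p \rest f[N]$ and $q \rest f[N]$ coincide, so $f(p) = f(q)$; since $f$ is an isomorphism, this gives $p = q$.

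For weak extension I would fix a realization $a$ of $p$ in a sufficiently saturated ambient model and extend the partial isomorphism $f^{-1} : f[N] \to N$ to an automorphism $F$ of the ambient model fixing $M_0$ pointwise. Setting $b := F(a)$, I would define $q := \gtp(b/N)$ and verify three things: (a) $q$ extends $p$; (b) $q$ does not $\mu$-split over $M_0$; (c) $q$ is algebraic iff $p$ is. For (c), $q$ is algebraic iff $b \in |N|$ iff $a \in |F^{-1}[N]| = |f[N]| \subseteq |M|$, which is exactly the algebraicity of $p$. For (b), given an $M_0$-isomorphism $h : N_1 \cong_{M_0} N_2$ with $N_\ell \lea N$ in $\K_\mu$, I would transfer via $F^{-1}$ to $F^{-1}[N_\ell] \lea F^{-1}[N]$ and apply non-$\mu$-splitting of $\gtp(a/F^{-1}[N])$ (which holds because $F^{-1}[N]$ sits inside an extension of $M$ and the type is a restriction of $p$).

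The main obstacle is (a). To show $q \rest M = p$, my plan is to prove weak uniqueness first and then observe that $q \rest M$ and $p$ both lie in $\gS(M)$, both extend $\gtp(a/M_0)$ (since $F$ fixes $M_0$, we have $\gtp(F(a)/M_0) = \gtp(a/M_0)$), and both are non-$\mu$-splitting over $M_0$; weak uniqueness then forces them to coincide. Verifying non-splitting of $\gtp(F(a)/M)$ requires the same $F^{-1}$-transfer bookkeeping used in (b), and is the most delicate step: one must arrange the ambient model carefully so that all the submodels arising from pulling back through $F$ sit inside a single common extension where non-splitting of $p$ may be legitimately invoked.
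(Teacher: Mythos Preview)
The paper does not prove this statement; it is quoted as a fact from VanDieren \cite[I.4.10, I.4.12]{vandierennomax}. So there is no ``paper's proof'' to compare against, and I evaluate your argument on its own.

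Your proof of weak uniqueness is correct and is exactly the standard argument.

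Your plan for weak extension has the right construction, but step (a) contains a genuine gap. You want to conclude $q \rest M = p$ by invoking weak uniqueness on the pair $q \rest M,\, p \in \gS(M)$, using that both are non-$\mu$-splitting over $M_0$ and agree on $M_0$. But weak uniqueness, as stated (and as you just proved it), requires agreement on the \emph{universal} model $M$, not on $M_0$: with $N$ replaced by $M$ the hypothesis ``$p \rest M = q \rest M$'' becomes the very conclusion you seek. There is no intermediate $M'$ with $M_0 \lea M' \lea M$ and $M'$ universal over $M_0$ on which you have verified agreement, so the appeal is circular.

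The fix is to bypass weak uniqueness and argue directly with non-splitting of $p$. Since $f \rest M : M \cong_{M_0} f[M]$ and both $M, f[M] \lea M$, non-$\mu$-splitting of $p$ over $M_0$ gives $f(p) = p \rest f[M]$. On the other hand, since $F^{-1} \rest N = f$, one computes $f(q) = \gtp(F^{-1}(b)/f[N]) = \gtp(a/f[N]) = p \rest f[N]$; restricting to $f[M]$ yields $f(q \rest M) = p \rest f[M] = f(p)$, hence $q \rest M = p$. This is the argument in VanDieren's paper. (A small separate issue: your algebraicity argument in (c) only gives one direction, since $a \in f[N]$ is strictly stronger than $a \in M$; but once (a) is established, the ``moreover'' clause is immediate from $q \rest M = p$.)
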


We are now ready to build the good frame:

\begin{theorem}\label{good-frame-thm}
  If:

  \begin{enumerate}
  \item $\K$ is superstable in $\aleph_0$.
  \item $\K$ has symmetry in $\aleph_0$.
  \item $\K$ has transitivity in $\aleph_0$.
  \item $\K$ is categorical in $\aleph_0$.
  \item $\K$ is saturative in $\aleph_0$.    
  \end{enumerate}

  Then there exists a type-full good $\aleph_0$-frame with underlying class $\K_{\aleph_0}$.
\end{theorem}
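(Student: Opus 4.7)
The plan is to define a type-full frame by using non-$\aleph_0$-splitting over a limit base as a proxy for nonforking. For $M_0 \lea M$ both in $\K_{\aleph_0}$ and $p \in \gS(M)$, declare that $p$ does not fork over $M_0$ if there exists $N_0 \lea M_0$ in $\K_{\aleph_0}$ such that $M_0$ is limit over $N_0$ and $p$ does not $\aleph_0$-split over $N_0$; let $\Sbs(M)$ consist of all nonalgebraic Galois types over $M$. The structural requirements on $\K_{\aleph_0}$ (amalgamation, joint embedding, no maximal models, stability in $\aleph_0$) are already built into the superstability hypothesis, so it only remains to verify the axioms of the independence relation.

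The routine axioms would be dispatched first. Invariance is immediate and monotonicity follows from its analogue for splitting. Local character uses the no long splitting chains clause of superstability: resolve $M$ as $\seq{M_i : i < \omega}$ with each $M_{i+1}$ universal over $M_i$, pick $i$ so that $p$ does not $\aleph_0$-split over $M_i$, and set $M_0 := M_{i+1}$ with witness $N_0 := M_i$. Extension is Fact \ref{ns-facts}(1), since the non-splitting extension provided there does not fork over $M_0$. Uniqueness combines the weak uniqueness of Fact \ref{ns-facts}(2) with transitivity in $\aleph_0$: two nonforking extensions presented with possibly different witnesses $N_0^p, N_0^q \lea M_0$ can be compared by restricting to a common limit resolvent, where weak uniqueness forces equality. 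Symmetry is the corresponding hypothesis on splitting.

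The main obstacle is the continuity axiom. Since chains in $\K_{\aleph_0}$ have countable length, continuity reduces to chains of length $\omega$. Given an increasing continuous chain $\seq{M_i : i \le \omega}$ in $\K_{\aleph_0}$, some $M_\ast \lea M_0$, and $p \in \gS(M_\omega)$ with each $p \rest M_i$ not forking over $M_\ast$, one must produce a single witness $N_\ast \lea M_\ast$ such that $p$ does not $\aleph_0$-split over $N_\ast$. A priori, each restriction $p \rest M_i$ comes with its own witness base, and these must be synchronized. The plan is to apply Lemma \ref{saturative-chain} (this is precisely where the saturative and categoricity in $\aleph_0$ hypotheses enter) to extract a companion chain $\seq{N_i : i \le \omega}$ with each $M_i$ limit over $N_i$, each $N_{i+1}$ limit over $N_i$, and $N_\omega = M_\omega$. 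Then pick $N_\ast \lea M_\ast$ by applying local character to $p \rest M_0$ so that $p \rest M_0$ does not $\aleph_0$-split over $N_\ast$ and $M_\ast$ is limit over $N_\ast$. Build non-splitting extensions of $p \rest M_\ast$ over $N_\ast$ along the chain using Fact \ref{ns-facts}(1); at each stage, the uniqueness axiom (already verified) identifies this extension with $p \rest M_i$, so every $p \rest M_i$ does not $\aleph_0$-split over $N_\ast$. The standard observation that any splitting witness would already live inside some $M_i$ then yields that $p$ itself does not $\aleph_0$-split over $N_\ast$, so $p$ does not fork over $M_\ast$.

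I expect continuity to be the only step requiring genuine care; the saturative hypothesis, invoked through Lemma \ref{saturative-chain}, is what makes the synchronization of witnesses possible, and the remaining axioms reduce to standard applications of Fact \ref{ns-facts} combined with the listed transitivity, symmetry, categoricity, and superstability hypotheses.
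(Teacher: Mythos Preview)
Your definition of nonforking and your handling of invariance, monotonicity, extension, uniqueness, and symmetry are essentially the paper's. The problem is that you have swapped the roles of local character and continuity, and in doing so left a real gap in each.

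For local character, the good-frame axiom is a statement about \emph{arbitrary} increasing continuous chains $\seq{M_i : i \le \delta}$: one must find $i < \delta$ with $p$ not forking over $M_i$. Your argument (``resolve $M$ as $\seq{M_i : i < \omega}$ with each $M_{i+1}$ universal over $M_i$ \ldots'') chooses a nice resolution rather than working with the given chain; in an arbitrary chain there is no reason for $M_{i+1}$ to be universal over $M_i$, so the no-long-splitting-chains clause of superstability does not apply directly. This is exactly the obstacle that Lemma \ref{saturative-chain} is designed to overcome, and the paper uses it \emph{here}: given an arbitrary chain $\seq{M_i : i \le \omega}$, extract the companion chain $\seq{N_i : i \le \omega}$ with $N_{i+1}$ limit over $N_i$ and $N_\omega = M_\omega$, apply superstability to the $N_i$-chain to get $p$ not $\aleph_0$-splitting over some $N_j$, and conclude that $p$ does not fork over $M_j$ since $M_j$ is limit over $N_j$. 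Once chain local character holds, continuity is free in the type-full setting (by \cite[II.2.17(3)]{shelahaecbook}).

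Your direct continuity argument also breaks at its last step. The ``standard observation that any splitting witness would already live inside some $M_i$'' is valid when the witnesses have size strictly less than $\|M_\omega\|$, but here every model has size $\aleph_0$: an $\aleph_0$-splitting witness is a pair of $\aleph_0$-sized substructures of $M_\omega$, and there is no reason either one sits inside any $M_i$. (You do extract the $N_i$-chain from Lemma \ref{saturative-chain}, but you never actually use it; the fix is to redirect that chain to local character as above, not to continuity.)
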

\begin{proof}
  By the superstability assumption, $\K_{\aleph_0}$ has amalgamation and no maximal models and is stable in $\aleph_0$. By the categoricity assumption, $\K_{\aleph_0}$ also has joint embedding. It remains to define an appropriate forking notion. For $M \lea N$ both in $\K_{\aleph_0}$, let us say that $p \in \gS (N)$ \emph{does not fork over $M$} if there exists $M_0 \in \K_{\aleph_0}$ such that $M$ is universal over $M_0$ and $p$ does not $\aleph_0$-split over $M_0$. We check that it has the required properties (see Definition \ref{good-frame-def}):

  \begin{enumerate}
  \item Invariance, monotonicity: Straightforward.
  \item Extension existence: By the weak extension property of splitting (Fact \ref{ns-facts}).
  \item Uniqueness: Let $M \lea N$ both be in $\K_{\aleph_0}$ and let $p, q \in \gS (N)$ be nonforking over $M$ such that $p \rest M = q \rest M$. Using the extension property, we can make $N$ bigger if necessary to assume without loss of generality that $N$ is limit over $M$. By categoricity, $M$ is limit. Pick $\seq{M_i : i \le \omega}$ increasing continuous witnessing it (so $M_\omega = M$ and $M_{i + 1}$ is universal over $M_i$ for all $i < \omega$). By the superstability assumption, there exists $i < \omega$ such that $p \rest M$ does not $\aleph_0$-split over $M_i$ and there exists $j < \omega$ such that $q \rest M$ does not $\aleph_0$-split over $M_j$. Let $i^\ast := i + j$. Then both $p \rest M$ and $q \rest M$ do not $\aleph_0$-split over $M_{i^\ast}$. By $\aleph_0$-transitivity, both $p$ and $q$ do not $\aleph_0$-split over $M_{i^\ast}$. Now use the weak uniqueness property of splitting (Fact \ref{ns-facts}).
  \item Continuity: In the type-full context, this follows from local character (see \cite[II.2.17(3)]{shelahaecbook}).
  \item Local character: Let $\delta < \omega_1$ be limit and let $\seq{M_i : i \le \delta}$ be increasing continuous in $\K_{\aleph_0}$. Let $p \in \gS (M_\delta)$. We want to see that there exists $i < \delta$ such that $p$ does not fork over $M_i$. We have that $\cf{\delta} = \omega$, so without loss of generality $\delta = \omega$. Let $\seq{N_i : i \le \omega}$ be as given by Lemma \ref{saturative-chain} (we are using saturativity here). By superstability, there exists $i < \omega$ such that $p$ does not $\aleph_0$-split over $N_i$. Because $M_i$ is limit (hence universal) over $M_i$, this means that $p$ does not fork over $M_i$, as desired.
  \item Symmetry: by $\aleph_0$-symmetry (see \cite[4.12]{vv-symmetry-transfer-afml}).
  \end{enumerate}
\end{proof}
\begin{cor}\label{good-frame-cor}
  Assume that $\LS (\K) = \aleph_0$. If:

  \begin{enumerate}
  \item $\K$ has amalgamation in $\aleph_0$.
  \item $\K$ is categorical in $\aleph_0$.
  \item $\K$ is saturative in $\aleph_0$.
  \item $\K$ has arbitrarily large models and is categorical in $\aleph_1$.
  \end{enumerate}

  Then there exists a type-full good $\aleph_0$-frame with underlying class $\K_{\aleph_0}$.
\end{cor}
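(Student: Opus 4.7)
The plan is to reduce Corollary \ref{good-frame-cor} to Theorem \ref{good-frame-thm} by verifying the three hypotheses of that theorem which are not directly assumed here: superstability, symmetry, and transitivity in $\aleph_0$. Categoricity and saturativity in $\aleph_0$ are assumed outright, so it suffices to establish the remaining three.

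First I would check that $\K_{\aleph_0}$ has no maximal models. Since $\LS (\K) = \aleph_0$ and $\K$ has arbitrarily large models, there exists $N \in \K_{\aleph_1}$. Given any $M \in \K_{\aleph_0}$, amalgamation and joint embedding (the latter coming from categoricity in $\aleph_0$) allow us to assume $M \lea N$; since $|N| \setminus |M| \neq \emptyset$, a Löwenheim-Skolem argument produces $M' \in \K_{\aleph_0}$ with $M \lta M' \lea N$. Combined with the assumed amalgamation in $\aleph_0$, this puts us in position to apply Fact \ref{shvi} with $\mu = \aleph_0$ and $\lambda = \aleph_1$, yielding superstability in $\aleph_0$.

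Next, symmetry in $\aleph_0$ follows immediately from Fact \ref{sym-categ}, given superstability in $\aleph_0$ and categoricity in $\aleph_1$. For transitivity in $\aleph_0$, I would invoke Fact \ref{trans-from-solvability} with $\mu = \aleph_0$: we have amalgamation and no maximal models in $\aleph_0$, arbitrarily large models, and categoricity in $\aleph_1$, which is all that is required.

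With superstability, symmetry, transitivity, categoricity, and saturativity in $\aleph_0$ all in hand, Theorem \ref{good-frame-thm} applies and delivers the desired type-full good $\aleph_0$-frame on $\K_{\aleph_0}$. There is no serious obstacle here; the proof is essentially an assembly of the black boxes already quoted. The only small care needed is deriving the no-maximal-models condition, which is the one ingredient of Theorem \ref{good-frame-thm}'s hypotheses not named explicitly in the corollary, but it drops out cleanly from $\LS (\K) = \aleph_0$ together with the existence of some uncountable model.
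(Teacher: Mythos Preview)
Your proposal is correct and follows essentially the same approach as the paper: reduce to Theorem \ref{good-frame-thm} by first deriving no maximal models in $\aleph_0$ from categoricity in $\aleph_0$ together with the existence of a model in $\aleph_1$, then invoke Facts \ref{shvi}, \ref{sym-categ}, and \ref{trans-from-solvability} for superstability, symmetry, and transitivity, respectively. Your treatment of the no-maximal-models step is slightly more detailed than the paper's, but the argument is the same.
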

\begin{proof}
  It is enough to check that the hypotheses of Theorem \ref{good-frame-thm} are satisfied. First note that $\K$ has no maximal models in $\aleph_0$ because it has a model in $\aleph_1$ (by solvability) and is categorical in $\aleph_0$. Therefore by Fact \ref{shvi}, $\K$ is $\aleph_0$-superstable. By Fact \ref{sym-categ}, $\K$ has $\aleph_0$-symmetry. Finally by Fact \ref{trans-from-solvability}, $\K$ has $\aleph_0$-transitivity.
\end{proof}

\begin{cor}\label{hs-aleph0-frame}
  For $n \in [3, \omega)$, there exists a type-full good $\aleph_0$-frame on $\K^n$.
\end{cor}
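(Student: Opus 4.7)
The plan is to simply verify that $\K^n$ satisfies the hypotheses of Corollary \ref{good-frame-cor} and apply it. All five conditions there are essentially already on record in the paper, so this amounts to assembling citations rather than new work.

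First I would observe that $\LS(\K^n) = \aleph_0$ since $\K^n$ is defined from an $\Ll_{\omega_1,\omega}$-sentence. By Fact \ref{bk-fact}(1), $\K^n$ has (disjoint) amalgamation and arbitrarily large models, so in particular $\K^n$ has amalgamation in $\aleph_0$. Since $n \ge 3$, we have $n - 2 \ge 1$, so Fact \ref{bk-fact}(3) gives categoricity in both $\aleph_0$ and $\aleph_1$ (indeed in every cardinal up to $\aleph_{n-2}$).

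Next I would note that the saturativity hypothesis is exactly Corollary \ref{limit-char-claim}, applied with $k = 0$ (which is a legitimate choice since $0 \le n - 3$ when $n \ge 3$): this tells us $\K^n$ is saturative in $\aleph_0$. With all five conditions of Corollary \ref{good-frame-cor} in hand, we immediately conclude that there is a type-full good $\aleph_0$-frame with underlying class $\K^n_{\aleph_0}$.

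I do not expect any genuine obstacle here; the whole point of the preceding section is to isolate exactly the package of properties needed, and the Hart-Shelah example has each of them with room to spare. The only judgment call is noting that the categoricity assumption in $\aleph_1$ (used in Corollary \ref{good-frame-cor} to derive $\aleph_0$-superstability via Fact \ref{shvi}, $\aleph_0$-symmetry via Fact \ref{sym-categ}, and $\aleph_0$-transitivity via Fact \ref{trans-from-solvability}) is genuinely available for $\K^n$ when $n \ge 3$, which is precisely why the hypothesis of the corollary is restricted to $n \in [3, \omega)$.
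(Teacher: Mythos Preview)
Your proposal is correct and follows exactly the paper's approach: the paper's proof is the single line ``By Fact \ref{bk-fact} and Corollary \ref{limit-char-claim}, $\K^n$ satisfies the hypotheses of Corollary \ref{good-frame-cor},'' and you have simply unpacked which parts of Fact \ref{bk-fact} give which hypotheses.
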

\begin{proof}
  By Fact \ref{bk-fact} and Corollary \ref{limit-char-claim}, $\K^n$ satisfies the hypotheses of Corollary \ref{good-frame-cor}.
\end{proof}

The argument also allows us to prove that Theorem \ref{shortness-thm} is optimal, even when $n = 3$:

\begin{cor}\label{type-short-negative}
  For $n \in [3, \omega)$, $\K^n$ is not $(<\aleph_{n - 3}, \aleph_{n - 3})$-type short over $\aleph_{n - 3}$-sized models.
\end{cor}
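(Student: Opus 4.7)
My plan is to argue by contradiction, leveraging the non-weak-successfulness established in Theorem \ref{negative-prop}. If $\K^n$ were $(<\aleph_{n-3}, \aleph_{n-3})$-type short over $\aleph_{n-3}$-sized models, then combined with the existing tameness and type-shortness results, the type-full good $\aleph_{n-3}$-frame on $\K^n$ would have to be weakly successful, which is incompatible with Theorem \ref{negative-prop}.

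The first step is to upgrade the assumed type-shortness. Suppose $\K^n$ is $(<\aleph_{n-3}, \aleph_{n-3})$-type short over $\aleph_{n-3}$-sized models. I would combine this with Theorem \ref{shortness-thm} by a two-step restriction argument: given two distinct Galois types of length $\aleph_{n-3}$ over an $\aleph_{n-3}$-sized model, the assumption first yields a distinguishing subsequence of length $<\aleph_{n-3}$; then Theorem \ref{shortness-thm}, which applies to types of length $<\aleph_{n-3}$ over $\le\aleph_{n-3}$-sized models, further reduces this to a finite sub-subsequence. Together with Theorem \ref{shortness-thm} for shorter types, $\K^n$ would therefore be $(<\aleph_0, \le \aleph_{n-3})$-type short over $\aleph_{n-3}$-sized models.

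The second step is to deduce weak successfulness of the type-full good $\aleph_{n-3}$-frame $\s$ on $\K^n$, which exists by Theorem \ref{abstract-positive} when $n \ge 4$ and by Corollary \ref{hs-aleph0-frame} when $n = 3$. For $n \ge 4$ this is a direct application of Fact \ref{good-frame-constr} with $\lambda = \aleph_{n-3}$ and $\kappa = \aleph_0$: the categoricity in both $\lambda$ and $\lambda^+ = \aleph_{n-2}$, the $(\aleph_0, \le\aleph_{n-3})$-tameness (for types of length one and for $(<\aleph_0)$-length types), and the cardinal arithmetic $\aleph_0 = \aleph_0^{<\aleph_0}$, $\aleph_{n-3} = \aleph_{n-3}^{<\aleph_0}$ all hold by Fact \ref{bk-fact} and Theorem \ref{shortness-thm}. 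For $n = 3$ the hypothesis $\LS(\K) < \lambda$ of Fact \ref{good-frame-constr} fails, so I would appeal to the underlying result from \cite[Section 11]{indep-aec-apal} which derives weak successfulness from tameness and type-shortness directly, taking as input the type-full good $\aleph_0$-frame already built in Corollary \ref{hs-aleph0-frame}.

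The final step transfers the conclusion back to $\s^{n-3,n}$. By canonicity (Fact \ref{canon-fact}) $\s$ extends $\s^{n-3,n}$, and by Proposition \ref{exp-tf-nf}(1) the two nonforking relations agree on types of elements in $I$ (both reduce to nonalgebraicity). Hence any uniqueness triple $(a, M, N)$ in $\s$ with $a \in I(N)$ is already a uniqueness triple in $\s^{n-3,n}$, since the family of candidate amalgams is determined by the same nonforking condition in both frames. So weak successfulness of $\s$ entails weak successfulness of $\s^{n-3,n}$, contradicting Theorem \ref{negative-prop}. The main obstacle is handling $n = 3$ in step two: since $\aleph_{n-3} = \aleph_0 = \LS(\K^n)$, Fact \ref{good-frame-constr} is unavailable off-the-shelf, which is precisely why the construction of Section \ref{good-frame-sec} and the more general type-shortness-to-weak-successfulness machinery from \cite[Section 11]{indep-aec-apal} are needed to make the argument uniform in $n$.
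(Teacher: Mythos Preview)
Your proposal is correct and follows essentially the same route as the paper: argue by contradiction, upgrade the assumed shortness to $(<\aleph_0,\aleph_{n-3})$-type shortness via Theorem \ref{shortness-thm}, deduce that the type-full good $\aleph_{n-3}$-frame is weakly successful, and then pass this down to $\s^{n-3,n}$ to contradict Theorem \ref{negative-prop}. The case split on $n\ge 4$ versus $n=3$ and the use of Fact \ref{good-frame-constr} in the former case match the paper exactly.

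The one notable difference is in the $n=3$ case. You invoke \cite[Section 11]{indep-aec-apal} as a black box; the paper instead gives a more explicit argument, defining nonforking for $(<\aleph_1)$-length types via finitary nonsplitting, checking uniqueness for finite-length types directly (as in the proof of Theorem \ref{good-frame-thm}), lifting to countable length via the shortness assumption, and then quoting specific results \cite[3.8, 3.9, 3.11]{downward-categ-tame-apal} to conclude weak successfulness. Your appeal to \cite{indep-aec-apal} is morally the same machinery, but the paper's version is more self-contained and does not require you to verify the ``few additional technical conditions'' alluded to in the introduction; if you keep your formulation, you should at least name the precise statement from \cite{indep-aec-apal} and check its hypotheses explicitly.
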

\begin{proof}
  Let $\lambda := \aleph_{n - 3}$. By Theorem \ref{abstract-positive} (or Corollary \ref{hs-aleph0-frame} if $\lambda = \aleph_0$), there is a type-full good $\lambda$-frame $\s$ on $\K_\lambda$. Assume for a contradiction that $\K^n$ is $(<\lambda, \lambda)$-type short over $\lambda$-sized models. We will prove that $\s$ is weakly successful. This will imply (by Fact \ref{canon-fact} and the definition of uniqueness triples) that $\s^{n - 3, n}$ is weakly successful, contradicting Theorem \ref{negative-prop}. First observe that by Theorem \ref{shortness-thm}, $\K^n$ must be $(<\aleph_0, \lambda)$-type short over $\lambda$-sized models.

  We now consider two cases.

  \begin{itemize}
  \item If $\lambda > \aleph_0$, then (recalling Facts \ref{bk-fact} and \ref{canon-fact}) by Fact \ref{good-frame-constr} (where $\kappa$ there stands for $\aleph_0$ here), $\s$ is weakly successful, which is the desired contradiction.
  \item If $\lambda = \aleph_0$, we proceed similarly: For $M \lea N$ both in $\K_{\aleph_0}$ and $p \in \gS^{\alpha} (N)$ with $\alpha < \aleph_1$, let us say that $p$ \emph{does not fork over $M$} if for every finite $I \subseteq \alpha$ there exists $M_0 \lea M$ with $M$ universal over $M_0$ such that $p^I$ does not $\mu$-split over $M_0$. As in the proof of Theorem \ref{good-frame-thm} (noting that in Fact \ref{trans-from-solvability} transitivity holds for any type of finite length), this nonforking relation has the uniqueness property for types of finite length. By the shortness assumption, it has it for types of length at most $\aleph_0$ too. It is easy to see that nonforking satisfies local character for $(<\aleph_0)$-length types over $(\aleph_0, \aleph_1)$-limits and has the left $(<\aleph_0)$-witness property (see \cite[3.7]{downward-categ-tame-apal}). Therefore by \cite[3.8, 3.9]{downward-categ-tame-apal} it reflects down (see \cite[3.7(3)]{downward-categ-tame-apal}). By \cite[3.11]{downward-categ-tame-apal}, $\s$ is weakly successful, as desired.
  \end{itemize}
\end{proof}

\bibliographystyle{amsalpha}
\bibliography{counterexamples-frames}

\end{document}